\documentclass{article}
\usepackage[utf8]{inputenc}
\usepackage{fullpage}
\usepackage{amsthm,mathtools,scalerel}
\usepackage{color,soul,latexsym,amsmath,amssymb,amsfonts,amsthm,dsfont,enumitem,xcolor,bbm,threeparttable,graphicx,float,subfigure}
\usepackage{authblk}
\usepackage[round]{natbib}
\RequirePackage[colorlinks=true,allcolors=blue,hypertexnames=false]{hyperref}%
\usepackage{graphicx,todonotes}
\usepackage{outlines}
\usepackage{array}
\usepackage{selectp}
\usepackage{forest}
\usepackage{enumitem}
\setlist[description]{style=nextline}
\usepackage[ruled,linesnumbered, vlined, noend]{algorithm2e}
\mathtoolsset{showonlyrefs}
\usepackage{titlesec}

\newtheorem{thm}{Theorem}%[section]

\newtheorem{lemma}{Lemma}

\newcommand{\E}{\mathbb{E}}

\newcommand{\Normal}{{\mathcal{N}}}

\renewcommand{\d}{\mathrm{d}}

\newcommand{\e}{\varepsilon}

 \newcommand{\bb}[1]{\mathbb{#1}}
\newcommand{\N}{\mathbb{N}}
\newcommand{\Z}{\mathbb{Z}}
\newcommand{\paren}[1]{\left(#1\right)}
\newcommand{\sqbrace}[1]{\left[#1\right]}
\newcommand{\cbrace}[1]{\left\{#1\right\}}
\newcommand{\norm}[2]{\left\|#1\right\|_{#2}}
\renewcommand{\l}{\lambda}

\newcommand{\bp}{\begin{proof}}
\newcommand{\ep}{\end{proof}}
\renewcommand{\L}{\Lambda}
\newcommand{\uind}[3]{1\leq #1 < #2 \leq #3}

\title{\bf On the L{\'e}vy concentration function of Gaussian quadratic forms with applications to second order U-statistics}
\author[1]{Abhimanyu Choudhary}
\author[2]{Arun Kumar Kuchibhotla}
\affil[1]{Department of Mathematical Sciences, Carnegie Mellon University}
\affil[2]{Department of Statistics and Data Science, Carnegie Mellon University}
\begin{document}
\maketitle
\begin{abstract}
    We provide an upper-bound for the L{\'e}vy concentration function:
    $$
    Q_{S}(\e):= \sup_{x \in\bb R}\bb P (x < S \leq x+\e)
    $$
    where $S$ is a weighted sum of noncentral chi-square random variables:
    $$
    S:= \sum_{k=1}^\infty \lambda_k (Z_k^2 - 1) + \mu_kZ_k
    $$
    Here, $Z_1, \cdots, Z_n$ are independent standard Gaussian random variables and $\{\lambda_k\}_{k=1}^\infty, \{\mu_k\}_{k=1}^\infty$ are real valued, square summable sequences. Random variables of this type often appear as limiting distributions of second order $U$-statistics. Our bound is adaptive, in that it recovers (up to constant factors) Gaussian type concentration function estimates if $\|\lambda\|_2$ is negligible compared to $\norm{\mu}{2}$ and chi-square estimates if $\norm{\mu}{2}$ is negligible compared to $\norm{\lambda}{2}$. Our bound generalizes existing bounds in various ways. In particular, we make no assumptions regarding the number of nonzero $|\lambda_k|$ or the size of the minimal $|\l_k|$, nor do we make any assumptions on the signs of $\lambda_k$. Finally, we apply our bound to some examples of interest, specifically quadratic forms that arise in limit theorems for second-order U-statistics.
\end{abstract}

\section{Introduction}
Given independent and identically distributed random variables $X_1, \ldots, X_n$ taking values in some measurable space $(\mathcal{X}, \mathcal{A})$ and a  measurable function $h:\mathcal{X}\times\mathcal{X}\to\mathbb{R}$, consider the second order $U$-statistic
\[
U_n = \frac{1}{{n \choose 2}}\sum_{1\le i \neq j \le n}\, h(X_i, X_j).
\]
This is an unbiased estimator of $\theta = \mathbb{E}[U_n] = \mathbb{E}[h(X_1, X_2)].$ In fact, $U_n$ is the UMVUE (uniformly minimum variance unbiased estimator) for $\theta$. $U$-Statistics are a natural first extension of sums of independent random variables. In the context of hypothesis testing, $U$-statistics appear as test statistics.  The sample variance is one of the most commonly used second order $U$-statistics (in the context of inference for the population mean). Additionally, the Hoeffding decomposition (a form of ANOVA decomposition) implies that any function of independent random variables can be approximated by a $U$-statistics of sufficiently high order~\citep{alberinkthes}. For these reasons, understanding the limiting distributions of $U$-statistics is a fundamental problem. The limit theory of $U$-statistics is much richer compared to that of IID sums. The limiting distribution can be a Gaussian or a Gaussian chaos (related to the chi-square distribution) depending the degeneracy of $h(\cdot, \cdot)$. The {\em kernel} $h(\cdot, \cdot)$ is called degenerate or canonical if
\[
\mathbb{E}[h(X_1, X_2)|X_2] = \mathbb{E}[h(X_1, X_2)|X_1] = C\quad\mbox{almost surely.}
\]
From now on, we assume that the kernels of the $U$-statistics are symmetric, i.e., $h(x_1, x_2) = h(x_2, x_1)$ and we set $H_1(x) = \mathbb{E}[h(x, X_1)]$. With this notation, the kernel is degenerate if $H_1(X) = C$ almost surely.
\newline 
\newline
To illustrate the difference in the limiting distributions, consider the example of estimating $\mu^2 = (\mathbb{E}[X_1])^2$ using
\[
\widehat{\theta}_n = \frac{1}{\binom{n}{2}}\sum_{i < j} X_iX_j ~=~ \frac{n}{n-1}\left(\frac{1}{n}\sum_{i=1}^n X_i\right)^2 - \frac{1}{n-1}\left(\frac{1}{n}\sum_{i=1}^n X_i^2\right).
\]
If $\mu = \mathbb{E}[X_1] = 0$ (degenerate case), then both terms are of order $1/n$ and if $\mu = \mathbb{E}[X_1] \neq 0$ (non-degenerate case), then the first term is of order $1/\sqrt{n}$ and the second term is of order $1/n$. This implies that the rate of convergence and the limiting distribution depend heavily on the value of $\mu$. Considering the cases at higher granularity, there is a phase transition from Gaussian to a more complicated limiting distribution as $|\mu|$ varies from a constant to zero. Such a precise phase transition can be fleshed out explicitly by considering a Berry--Esseen bound in the Kolmogorov distance for $U_n$ which should remain valid uniformly over all values of $|\mu|$. 
\newline 
\newline 
Unfortunately, such a uniform Berry--Esseen bound for $U$-statistics is unavailable. Most existing BE bounds either assume explicitly a non-degenerate or degenerate kernel or do not imply convergence in distribution under degeneracy. For example, the BE bounds of~\cite{bentkusalberink} are of the order $n^{-1/2}\mathbb{E}[|H_1(X_1) - \theta|^3]/(\mbox{Var}(H_1(X)))^{3/2}$, which becomes order 1 if $\mbox{Var}(H_1(X))$ is close to zero. On the other hand, the BE bounds of~\cite{yanus} and~\cite{huanggandy} do not assume explicit non-degeneracy but their bounds only converge to zero at a rate of $n^{-1/12}$ or $n^{-1/14}$, with some dependence on the eigenvalues of the kernel. The BE and Edgeworth expansions bounds of~\cite{Bentkus1999} could imply an $n^{-1/2}$ rate but requires too many eigenvalues of the kernel to be non-zero (e.g., their result would not apply for the estimation of $\mu^2$). Currently, there is no BE bound that provides a ``smooth'' phase transition between Gaussian and non-Gaussian limiting distributions. Here we also mention the works~\cite{bentkus1996optimal}, where a BE bound of order $n^{-1}$ is obtained for quadratic forms for $\mathbb{R}^d$- or Hilbert-space valued random variables. These quadratic forms can be viewed as degenerate second-order $U$-statistics, and the results of~\cite{bentkus1996optimal} can be contrasted with the rates obtained in~\cite{bentkusalberink} and~\cite{yanus}. We consider this paper as a first step in deriving a BE bound that gets the best of these settings and exhibits a smooth transition. We mention here the recent article~\cite{huang2024slow} that provides lower bounds on the BE bound for $U$-statistics. It should, however, be clarified that these are worst case lower bounds and do not contradict out goal of BE bounds that exhibit smooth transitions.
\newline 
\newline 
The derivation of BE bounds in the Kolmogorov distance involves two key steps: (1) a smoothing inequality; and (2) a L\'evy concentration function estimate. The smooth part of the smoothing inequalities revolve around the estimation of the characteristic function or thrice-differentiable functions of the $U$-statistic, and has been well-studied. The anti-concentration inequality is the subject of the current paper. The concentration function estimate from~\cite{bentkusalberink} would depend on $\mbox{Var}(H_1(X))$ and the anti-concentration inequality from~\cite{yanus} and ~\cite{huanggandy} does not depend on this variance, but on the eigenvalues of the kernel. However, if one combines the concentration function estimate of of~\cite{yanus} or~\cite{huanggandy} with the techniques of~\cite{bentkusalberink}, then one obtains a sub-optimal rate of at most $n^{-1/4}.$ We note here that the anti-concentration inequality is needed for the supposed limiting distribution and not the $U$-statistic itself. The goal of the current paper is to thus derive a concentration function estimate that smoothly transitions between Gaussian and chi-square regimes. The next section discusses the overall organization of the article. 
\subsection{Organization}
Here we outline the various sections of our work.
     \textbf{Section \ref{sec:limi-dist-U-stat}} describes how this work relates to quantitative limit theorems for U-statistics. If one is familiar with the general machinery for proving Berry-Esseen bounds in the Kolmogorov distance and limit theory for second order $U$-statistics, this section can be skipped entirely. \textbf{Section \ref{subsec:lit-review}} contains a review of current literature on concentration function estimates of quadratic forms, as well as a discussion of how and when our bound provides improvements. \textbf{Section \ref{sec: Notation}} is a guide to the notation used for the remainder of the note. \textbf{Section \ref{Sec:Estimate}} contains our main estimate and a discussion of its features and drawbacks as well as a comparison to the current estimates in the literature. The section also contains an overview of its proof. \textbf{Section \ref{Sec: Proof}} contains the proof of our estimate, assuming a technical lemma (Lemma \ref{Lem:LambdaIntEst}) which is proved in the appendix, see \ref{prf:LambdaIntEst}. \textbf{Section \ref{Sec:Applications}} provides some applications of our bound to certain U-statistics. \textbf{Section \ref{Sec: Conclusion}} is a brief conclusion and discusses some natural directions for future work. After this, our appendix contains proofs of a variety of results. The most significant are the proof of Lemma \ref{Lem:LambdaIntEst}, as well as a proof of how Theorem \ref{Thm:User Friendly Bound} is implied by Theorem \ref{Thm:Full Bound}. It also contains proofs of a lower bound for concentration functions, as well as proofs that our bounds match those currently available in the literature. We also discuss other techniques  available in the literature for estimating concentration functions, and how they failed to capture the behavior we desired. Finally, it contains detailed computations of the eigenvalues and eigenfunctions associated with the $U$-statistics mentioned in the applications section \ref{Sec:Applications}. 
\section{Distributional approximations for $U$-statistics}\label{sec:limi-dist-U-stat}
\subsection{Distributional approximations and the concentration function}
\label{sec:Asymp Dist U}
Let $X, X',X_1, \cdots, X_n$ be random IID variables taking values in a measurable space $(\mathcal{X}, \mathcal{A})$ with distribution $P_n$ (allowing for a triangular array setting.) Let $h:\mathcal{X} \times \mathcal{X} \to \bb R$ be a symmetric, square integrable, mean zero function with respect to $P_n\otimes P_n$, that is,
    $$
    h(x,x') = h(x',x), \quad \E[h(X, X')^2] < +\infty, \quad \E[h(X,X')] = 0 
    $$
The random variable 
\[
U_n = \frac{1}{\binom{n}{2}}\sum_{1\le i < j \le n} h(X_i, X_j),
\]
is known as the \textit{U-Statistic} associated to the \textit{kernel} $h$.
Lemma 5.1.5A of~\cite{serfling2009approximation} (known more generally as the \textit{Hoeffding Decomposition}) implies that we may decompose $U_n$ into two parts:
\begin{equation}
    \label{Eq: Hoeffding Decomp}
U_n = \frac{2}{n}\sum_{i=1}^n H_1(X_i) + \frac{1}{\binom{n}{2}}\sum_{1\le i < j\le n} H_2(X_i, X_j),
\end{equation}
where $$ 
H_1(x) = \mathbb{E}[h(X_1, X_2)|X_1 = x], \quad H_2(x, y) = h(x, y) - H_1(x) - H_1(y)$$ 
The first term in~\eqref{Eq: Hoeffding Decomp} is a sum of independent mean-zero random variables and is often known as the \emph{linear} part. The second can be thought of as an interaction term that remains after removing first order effects. The function $H_2(\cdot, \cdot)$ defines a self-adjoint Hilbert-Schmidt Operator $T:L^2(\mathcal{X}, P_n) \to L^{2}(\mathcal{X}, P_n)$ given by:
$$
Tf(x) := \int_{\mathcal{X}} H_2(x,y)f(y) \d P_n(y) = \E[H_2(x,X)f(X)]
$$
Consequently, the spectral theory of compact operators on Hilbert Spaces gives us that $H_2(\cdot, \cdot)$ may be written in the form:
\begin{equation}\label{eq:eigen-decomposition}
H_2(x, y) = \sum_{k=1}^{\infty} b_k\phi_k(x)\phi_k(y), \quad \text{(In the $L^2(P_n \otimes P_n)$ sense)}
\end{equation}
where $\{\phi_k\}_{k\ge1}$, $\phi_k:X \to \mathbb{R}$ is a sequence of \textit{orthonormal eigenfunctions} of $T$ i.e.,
\[
\mathbb{E}[\phi_j(X_i)\phi_k(X_i)] = \begin{cases}1, &\mbox{if }j=k,\\
0, &\mbox{if }j\neq k.
\end{cases}
\]
and the functions $\{\phi_k\}_{k\ge1}$ and $\{b_k\}_{k\ge1}$ satisfy
\[
b_k \phi_k(x) = (T\phi_k)(x) = \mathbb{E}[H_2(x, X)\phi_k(X)],
\]
and $\sum_{k=1}^{\infty} b_k^2 < \infty$. We may, without loss of generality, assume that $|b_1| \geq |b_2| \geq |b_3| \geq \cdots$ after appropriate rearrangement. An important fact to notice is:
$$
(T \mathbbm{1})(x) = \int_{\mathbb{R}}H_2(x,y)\d P_n(y) =  \E[H_2(x,X')] = 0.
$$
This means that $\mathbbm{1}$ is always an eigenfunction of $T$ with eigenvalue $0$. Thus, a corollary of orthonormality is that $\E[\phi_k(X)] = \bb{E}[\phi_k(X) \mathbbm{1}] = 0$ for any eigenfunction with $1\leq k < + \infty$. Note that in our previous terminology this states that the $U$-statistic associated to the kernel $H_2$ is always \emph{degenerate}.
\newline 
\newline 
With all of this in mind, let us jointly consider $H_1, H_2$. If needed, let $\phi_0(\cdot)$ be a function orthonormal to $\{\phi_k(\cdot): 1\leq k \leq \infty\}$ such that $\{\phi_k(\cdot):\,0\leq k \leq +\infty\}$ forms an orthonormal basis of the linear span of $\{H_1(\cdot), E[H_2(\cdot, X)g(X)]: g\in L_2\}$. In words: this set spans the image of the operator $T$ (which is spanned by $\phi_k$ with indices $1 \leq k <+\infty$) along with the one-dimensional subspace spanned by $H_1$. We thus obtain the decompositions:
\[
H_1(x) = \sum_{k=0}^{\infty} a_k\phi_k(x),\quad\mbox{and}\quad H_2(x, y) = \sum_{k=0}^{\infty}b_k \phi_k(x)\phi_k(y).
\]
for some square summable coefficients $a_k$, determined by the standard $L^2(P_n)$ inner product, i.e. $a_k = E[H_1(X)\phi_k(X)]$ and $b_0 = 0$. Substituting this into the Hoeffding decomposition \eqref{Eq: Hoeffding Decomp}, $U_n$ can be represented as:
\begin{align*}
U_n &= \frac{2}{n}\sum_{i=1}^n \left(\sum_{k=0}^{\infty} a_k\phi_k(X_i)\right) + \binom{n}{2}^{-1}\sum_{1\le i<j\le n} \left(\sum_{k=0}^{\infty} b_k\phi_k(X_i)\phi_k(X_j)\right)\\
&= \sum_{k=0}^{\infty} \left[\frac{2a_k
}{n}\sum_{i=1}^n \phi_k(X_i) + \frac{2b_k}{n(n-1)}\sum_{1\le i<j\le n} \phi_k(X_i)\phi_k(X_j)\right].
\end{align*}
Using the identity
\[
\frac{2}{n}\sum_{1\le i < j\le n} \xi_i\xi_j = (n-1)\left(\frac{1}{n}\sum_{i=1}^n \xi_i\right)^2 - \left(\frac{1}{n}\sum_{i=1}^n \xi_i^2 - \left(\frac{1}{n}\sum_{i=1}^n \xi_i\right)^2\right),
\]
and setting
\[
\overline{\phi}_k := \frac{1}{n}\sum_{i=1}^n \phi_k(X_i)\quad\mbox{and}\quad \overline{S}_{\phi_k}^2 := \frac{1}{n}\sum_{i=1}^n \phi_k^2(X_i) - \left(\frac{1}{n}\sum_{i=1}^n \phi_k(X_i)\right)^2,\quad k \ge 0,
\]
we obtain
\begin{align*}
U_n &= \sum_{k=0}^{\infty} \left[2a_k\overline{\phi}_k + \frac{b_k}{n-1}\left\{(n-1)\overline{\phi}_k^2 - \overline{S}_{\phi_k}^2\right\}\right]\\
&= \sum_{k=0}^{\infty} \left[\frac{2a_k}{\sqrt{n}}(n^{1/2}\overline{\phi}_k) + \frac{b_k}{n}(n^{1/2}\overline{\phi}_k)^2 - \frac{b_k}{n-1}\overline{S}_{\phi_k}^2\right]\\
&= \sum_{k=0}^{\infty} \left[\frac{2a_k}{\sqrt{n}}(n^{1/2}\overline{\phi}_k) + \frac{b_k}{n}\left\{(n^{1/2}\overline{\phi}_k)^2 - 1\right\} - \frac{b_k}{n-1}\left\{\overline{S}_{\phi_k}^2 - 1\right\} - \frac{b_k}{n(n-1)}\right].
\end{align*}
Heuristically, we have that the sequence $(n^{1/2}\overline{\phi}_k)_{k\ge0}$ converges in distribution to an infinite-dimensional Gaussian random variable with mean zero and identity covariance, because $\mathbb{E}[\phi_k(X)] = 0$, $\mathbb{E}[\phi_k^2(X)] = 1$, and $E[\phi_k(X)\phi_{k'}(X)] = 0$ for $k\neq k'$. This, in turn, suggests that the distribution of $U_n$ should be approximately one where $n^{1/2}\overline{\phi}_k, k\ge 0$ are replaced with an independent sequence of standard Gaussian random variables. More formally, consider $\{G_{i,k}, 1\le i\le n, k\ge0\}$, a set of iid $\Normal(0, 1)$ random variables. Define
\[
Z_k = \frac{1}{\sqrt{n}}\sum_{i=1}^n G_{i,k},\quad\mbox{and}\quad S_k^2 = \frac{1}{n}\sum_{i=1}^n G_{i,k}^2 - \left(\frac{1}{n}\sum_{i=1}^n G_{i,k}\right)^2.
\]
This gives the approximating random variable as:
\[
W_n = \sum_{k=0}^{\infty} \left[\frac{2a_k}{\sqrt{n}}Z_k + \frac{b_k}{n}\left\{Z_k^2 - 1\right\} - \frac{b_k}{n-1}\left\{S_k^2 - 1\right\} - \frac{b_k}{n(n-1)}\right].
\]
One can make this argument rigorous by considering a truncated series in the definition of $U_n$ to get $U_{n,K}$ defined as
\[
U_{n,K} ~:=~  \sum_{k=0}^{K} \left[\frac{2a_k}{\sqrt{n}}(n^{1/2}\overline{\phi}_k) + \frac{b_k}{n}\left\{(n^{1/2}\overline{\phi}_k)^2 - 1\right\} - \frac{b_k}{n-1}\left\{\overline{S}_{\phi_k}^2 - 1\right\} - \frac{b_k}{n(n-1)}\right].
\]
Then approximating $U_{n,K}$ with $W_{n,K}$ (the approximate version of $W_n$) and then letting $K$ diverge slowly enough, one can prove that that the Kolmogorov-Smirnov distance between $U_n$ and $W_n$ converges to zero; see, for example, Section 5.5.2 of~\cite{serfling2009approximation} for details.
\newline 
\newline 
To quantify the accuracy of this approximation, we seek an estimate of
the Kolmogorov-Smirnov distance:
\[
d_{\mathrm{KS}}(U_n, W_n) = \sup_{t}|\mathbb{P}(U_n \le t) - \mathbb{P}(W_n \le t)|.
\]
Note that for any choice of smooth functions $g_{t,\pm\varepsilon}(\cdot)$ satisfying
\[
\mathbf{1}\{x \le t - \varepsilon\} \le g_{t,-\varepsilon}(x) \le \mathbf{1}\{x \le t\} \le g_{t,\varepsilon}(x) \le \mathbf{1}\{x \le t + \varepsilon\},
\]
we have
\begin{equation}\label{eq:Lindeberg-smoothing-inequality}
d_{\mathrm{KS}}(U_n, W_n) \le \sup_{t,\eta\in\{\pm\varepsilon\}}\,|\mathbb{E}[g_{t,\eta}(U_n)] - \mathbb{E}[g_{t,\eta}(W_n)]| ~+~ \sup_{t\in\mathbb{R}}\,\mathbb{P}(t - \varepsilon \le W_n \le t + \varepsilon).
\end{equation}
The second term here is related to the L{\'e}vy concentration function of $W_n$: for any random variable $Y$, define
\[
Q_Y(l) ~:=~ \sup_{a, b\in\mathbb{R}:\, b - a \le l}\,\mathbb{P}(a < Y \le b).
\]
It should be mentioned here that the smoothing inequality~\eqref{eq:Lindeberg-smoothing-inequality} could be replaced with the Esseen smoothing inequality~\citep[Theorem 5.3]{Petrov}, where the first term of~\eqref{eq:Lindeberg-smoothing-inequality} is replaced with a term involving the difference of characteristic functions of $U_n$, and $W_n$. \cite{bentkus1996optimal},~\cite{yanus}, and others provide bounds on the first part of these smoothing inequalities. The focus in the current paper is the concentration function. 
\newline 
\newline 
By standard results on Gaussian random variables, we know that $S_k^2$, $Z_k$ are independent, and moreover, if $X+Y$ are independent, then $Q_{X+Y}(\e) \leq Q_X(\e)$~\citep[Lemma 1.11]{Petrov}. Thus, if we define:
\begin{equation}\label{eq: Adjusted Limit Stat}
\widetilde{W}_n := \sum_{k=0}^\infty \left[\frac{2a_k}{\sqrt{n}}Z_k + \frac{b_k}{n}\left\{Z_k^2 - 1\right\}\right] = \sum_{k=1}^{\infty} \sqbrace{\frac{2a_{k-1}}{\sqrt{n}}Z_k + \frac{b_k}{n}(Z_k^2 -1)},
\end{equation}
then $Q_{W_n}(\e) \le Q_{\widetilde{W}_n}(\e)$. Thus it suffices to estimate $Q_{\widetilde{W}_n}(\e)$. In particular $Q_{\widetilde{W_n}}$ is exactly of the form of the statistic $S$ mentioned in the abstract, with $\mu_k = \frac{2a_{k-1}}{\sqrt{n}}$ and $\l_k = \frac{b_k}{n}$ for $k \geq 1$.
\newline 
\newline 
While we motivated the problem of bounding the concentration function from $U$-statistics, there are other applications that involve similar concentration functions; see, for example,~\cite{gotze2019large}. Before proceeding to present our results on the concentration function, we review existing results and their limitations.

\subsection{Literature Review}\label{subsec:lit-review}
Let $\{Z_k\}_{k =1}^\infty$ be an infinite sequence of independent standard Gaussian random variables on some common probability space, and let $\{\lambda_{k}\}_{k=1}^{\infty}, \{\mu_{k}\}_{k=1}^\infty$ be real-valued, square-summable sequences with $|\l_k| \geq |\l_{k+1}| $. We study the concentration function of
\[
S := \sum_{k=1}^{\infty} \left[\l_k(Z_k^2 - 1) + \mu_kZ_k\right].
\]
Here the infinite series is to be interpreted as an almost-sure limit, which by the Kolmogorov three-series theorem is well-defined if $\norm{\l}{2} + \norm{\mu}{2} < \infty$. We derive upper bounds on the concentration function:
$$
Q_S(\e) = \sup_{x \in \mathbb{R}}\mathbb{P}\paren{x < S \leq x+ \e }.
$$
The behavior of $Q_S(\e)$ was studied in~\cite{Bentkus1999}, corresponding to their Theorem 1.2. They provide a rate for statistics of a more general form, but their bound is finite only when $\lambda_9 \neq 0$. It was further studied in \cite{AlberinkBentkus1999}, who obtain a bound for more general statistics. However, their bound does not tend to $0$ as $\e \to 0^+$ as they work in a very general case in which the random variables involved may not be continuous. Finally, $Q_S(\e)$ was also studied in~\cite{bobkov2020two} under the assumption that $\l_k \ge 0$ and $\mu_k = 0$ for all $k \ge 1$. Note that, in the context of $U$-statistics, there are no constraints on the signs of $\lambda_k,\, k\ge 0$.  
While the theory of quadratic forms could be considered the ``first-step'' beyond that of sums of independent random variables, concentration function bounds for such objects can be rather subtle. This is because they are inherently ``mixed'' objects: they combine quadratic Gaussian (chi-square) and Gaussian behavior in one statistic and exhibit properties of both simultaneously. For the simplest (and perhaps most extreme) example, if $Z$ is a Gaussian random variable and $a,b \in \mathbb{R}$, then for $X_{a,b} := aZ^{2} + bZ$, we have $Q_{X_{0,b}}(\e)\asymp {\e}/{|b|}$ while $Q_{X_{a,0}}(\e) \asymp {\e^{1/2}}/{|a|}$. Thus, the individual coefficients on each term in the sum are of importance: e.g. if $|a|$ is small, but nonzero, and $|b|$ is large, one should expect a rate ``close'' to $\e$, and vice-versa for the opposite case. The bound used in ~\cite{yanus} is of the form $\e^{1/2}/|a|$ for all cases, whereas ~\cite{huanggandy} use the Carberry Wright inequality which provides an estimate of the form $\e^{1/2}/\mathrm{Var}(S)$. The upshot here is that these bounds \textit{always} yield a rate of order $\e^{1/2}$, even in cases when the linear part of $S$ is large compared to its quadratic part.
\newline 
\newline 
Currently known concentration function bounds for quadratic forms, while elegant and sharp in certain cases, often make some restrictive assumptions regarding the coefficients $\l_k$ and $\mu_k$, mostly to avoid working with degenerate cases as exhibited above. Matching, two-sided bounds are available when $\mu \equiv 0$ and $\l_1,\l_2 > 0$~\citep[Theorem 1]{bobkov2020two}. However, the presence of nonzero $\mu$ and differently signed $\lambda_k$ \textit{significantly} complicates the analysis. Theorem 2 of \cite{bobkov2020two} bounds $Q_S(\cdot)$ when $\mu \neq 0$, but only when all $\l_k$ are positive and satisfy a specific balancing condition. This condition makes quantitative the qualitative idea that better anti-concentration occurs when one summand does not dominate the others. Such ideas are closely related to the theory of majorization; see \cite{TkoczSchurEntropy} and \cite{MarshallOlkin} for a further discussion of this topic in the context of concentration function estimates. 
\newline 
\newline
Theorem 2 of \cite{bobkov2020two} is sharp in the sense that under the assumed balancing and non-negativity conditions of the theorem statement, the bound provided is an upper and lower bound up to universal constant factors. However, in the context of $U$-Statistics, the coefficients $\lambda_k$ correspond to the eigenvalues of a Hilbert-Schmidt operator on a Hilbert Space, while the $\mu_k$ have a similar interpretation. In particular $\lambda_k, \mu_k$ may be \textit{any} square-summable sequences. They need not satisfy any non-negativity or balancing conditions, as will be illustrated in Section~\ref{Sec:Applications}. This makes the existing concentration function bounds are either impossible to apply or sub-optimal.
\newline 
\newline
In summary, current results in the literature have obtained concentration function estimates in the following three settings:
 \begin{enumerate}
     \item Purely Gaussian random variables.
     \item Weighted sums of central chi-square random variables.
     \item Weighted sums of non-central chi-square random variables under a set of balancing and non-negativity assumptions.
 \end{enumerate}
We bound the concentration function $Q_S(\cdot)$ under minimal assumptions on the coefficients $\lambda_k$ and $\mu_k$. In particular, we make no assumptions on $\lambda_k$, $\mu_k$ beyond square-summability. Furthermore, our bounds are adaptive, in that they recover the right behavior as $\mu$ tends to $0$ (purely chi-square) and as $\lambda$ tends to $0$ (purely Gaussian). Our bounds extend currently existing bounds to new coefficient regimes, while recovering previously known bounds in already studied cases (except one). 
 
\section{Notation}\label{sec: Notation}
Throughout, we let $\lambda := (\lambda_{k})_{k=1}^{\infty}$ and $\mu := (\mu_{k})_{k=1}^\infty$ denote elements of $\ell^2(\N, \bb R)$ with decreasing absolute value, e.g. $|\lambda_1| \geq |\lambda_2| \geq  |\lambda_3| \geq \cdots$. For a sequence $x \in \mathbb{R}^{\mathbb{N}}$, we say $x \geq 0$ if $x_j \geq 0$ for all $j \in \mathbb{N}$ and $x \not \geq 0$ if this is not the case. For any $j \in \bb N$,  we denote:
$$\Lambda_j^2 := \sum_{k=j}^\infty  \lambda_k^2.$$ A special case is of course that $\L_1 = \norm{\l}{2}$. The symbols $p_j, q_j$  denote real valued sequences, satisfying the relations: 
\[
\frac{1}{p_j} := \frac{\l_j^2}{\L_1^2},\quad\mbox{for}\quad j\ge 1 \quad \text{and} \quad \frac{1}{q_j} := \frac{\l_j^2}{\L_2^2}\quad\mbox{for}\quad j\ge2. 
\]
In particular, note that $\sum_{j \ge 1} {1}/{p_j} = \sum_{j \ge 2}{1}/{q_j} = 1$. The sequence $\{Z_k\}_{k=1}^\infty$ is an infinite sequence of independent standard Gaussian random variables defined on some common probability space $(\Omega, \mathcal{F}, \bb P)$, while $\Phi$ denotes the standard Gaussian cumulative distribution function. Finally, $\phi_X(t)$ denotes the characteristic function of a random variable, given by $\phi_X(t) := \bb E [e^{itX}]$ whereas $f_X(x)$ denotes the density of a random variable $X$ whose distribution is absolutely continuous (with respect to the Lebesgue measure on $\bb R$). 
\section{The Main Estimate and its Properties}
\label{Sec:Estimate}
In this section, we present our main results and compare them to existing results. 

\begin{thm}[General Concentration Function Estimate]
\label{Thm:Full Bound}
    Let $S := \sum_{k=1}^{\infty}\lambda_k(Z_k^2 -1) + \mu_kZ_k$. 
    Then, the series defining $S$ converges almost surely and 
    \begin{align}
    Q_{S}(\e)
&\leq 35 \e \sqbrace{ \paren{\frac{1}{\L_1^2 + \norm{\mu}{2}^2}}^{1/2}  + \mathbbm{1}{\{\e \leq 2 \L_1\}}\exp\paren{-\frac{1}{4}\frac{\norm{\mu}{2}^2}{\L_1^2}}\mathcal{I}(\e, \lambda)} ,
\end{align}
where $\mathcal{I}(\e, \lambda)$ is given by
\begin{equation}
\mathcal{I}(\e,\l) ~=~ \begin{dcases}
    \frac{1}{\L_1} A_{p,1}^{1/p_1}A_{p,2}^{1/p_2}, & p_1 > 2, 
    \\ 
    \frac{1}{\L_1}\log\paren{\frac{2\L_1}{\e}}^{\frac{1}{2} + \frac{1}{p_2}}, &p_1 = 2, 
    \\  
                \frac{1}{\L_1}B_{p,1}^{1/p_1}A_{p,2}^{1/p_2}, & 1 \leq p_1 < 2,  \\
   \frac{1}{(\L_1\L_2)^{1/2}}\paren{1   +  \mathbbm{1}{\{\e \leq 2 \L_2\}C_{q,2}}}, 
     & 1 \leq p_1 < 2  ,\ q_2 > 1,
\end{dcases}
\end{equation}
with
    \begin{align}
        &A_{p,i}= \paren{\frac{1}{\frac{1}{2} - \frac{1}{p_i}}}\sqbrace{1 - \paren{\frac{\e}{2\L_1}}^{\frac{p_i}{2} - 1}}, \\ 
        &B_{p,i}=\paren{\frac{1}{\frac{1}{p_i} - \frac{1}{2}} }\sqbrace{\paren{\frac{\e}{2\L_1}}^{\frac{p_i}{2} - 1} - 1}, \\ 
        &C_{q,i} = \paren{\frac{1}{1 - \frac{1}{q_i}}}\sqbrace{1 - \paren{\frac{\e}{2\L_2}}^{\frac{q_i}{2} - \frac{1}{2}}} .
    \end{align}
\end{thm}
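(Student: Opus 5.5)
The plan is to pass through characteristic functions. The standard Esseen-type inequality \citep[Lemma 1.16]{Petrov} gives
\[
Q_S(\e) \le C\,\e\int_{|t|\le 1/\e}|\phi_S(t)|\,\d t .
\]
Independence and the Gaussian computation for each coordinate give the explicit modulus
\[
|\phi_S(t)| = \prod_{k\ge1}\frac{1}{(1+4\l_k^2t^2)^{1/4}}\exp\paren{-\frac{\mu_k^2t^2}{2(1+4\l_k^2t^2)}} .
\]
Almost sure convergence of the series comes first and is routine. The summands $\l_k(Z_k^2-1)+\mu_kZ_k$ are independent, mean zero, and have variance $2\l_k^2+\mu_k^2$, which is summable. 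Kolmogorov's two-series theorem then applies.

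The integral is split at $|t|=1/(2\L_1)$.

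\textbf{Small $t$.} On $|t|\le 1/(2\L_1)$ I use $\log(1+x)\le x$. This gives $\prod_k(1+4\l_k^2t^2)^{-1/4}\ge e^{-\L_1^2t^2}$, but I need the reverse direction. Since $4\l_k^2t^2\le 1$ here, $\log(1+x)\ge x/2$ on $[0,1]$ gives the upper bound $\prod_k(1+4\l_k^2t^2)^{-1/4}\le e^{-\L_1^2t^2/2}$. The exponential factor is at most $e^{-\norm{\mu}{2}^2t^2/4}$, because $1+4\l_k^2t^2\le 2$. Together,
\[
|\phi_S(t)|\le \exp\paren{-c(\L_1^2+\norm{\mu}{2}^2)t^2}.
\]
Integrating this Gaussian yields the first term $\e(\L_1^2+\norm{\mu}{2}^2)^{-1/2}$.

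\textbf{Large $t$.} On $1/(2\L_1)<|t|\le 1/\e$ I first handle the $\mu$-part. Since $\mu_k^2t^2/(1+4\l_k^2t^2)$ is increasing in $|t|$, each exponent is at least its value at $|t|=1/(2\L_1)$. That value is at least $\mu_k^2/(8\L_1^2)$, because $\l_k^2\le\L_1^2$. This produces the factor $\exp(-\norm{\mu}{2}^2/(16\L_1^2))$; with more care on constants, $\exp(-\norm{\mu}{2}^2/(4\L_1^2))$. This region is empty unless $\e\le 2\L_1$, which accounts for the indicator.

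What remains is $\e\int_{1/(2\L_1)}^{1/\e}\prod_k(1+4\l_k^2t^2)^{-1/4}\,\d t$, which should be Lemma~\ref{Lem:LambdaIntEst}. I bound it with the generalized Hölder inequality using exponents $p_j$, where $\sum_j 1/p_j=1$:
\[
\int\prod_k f_k \le \prod_k\norm{f_k}{L^{p_k}}, \qquad \norm{f_k}{L^{p_k}}^{p_k}=\int_{1/(2\L_1)}^{1/\e}(1+4\l_k^2t^2)^{-p_k/4}\,\d t .
\]
The $p_k$ are chosen so that $p_k\l_k^2=\L_1^2$. Substituting $s=2|\l_k|t$ and using $(1+s^2)^{-p_k/4}\le s^{-p_k/2}$ makes each factor computable.

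For $k=1$ the integral $\int s^{-p_1/2}$ splits into cases: power decay when $p_1>2$, giving $A$; a logarithm when $p_1=2$; and growth when $p_1<2$, giving $B$. For $k=2$, since $p_2\ge 2p_1/(p_1-1)$-type bounds, essentially $p_2>2$ once $\l_1^2\le\L_1^2/2$, so it gives $A_{p,2}$. For $k\ge3$ I bound crudely by the $k=2$ behaviour: since $p_k\ge p_2$, use $(1+s^2)^{-p_k/4}\le(1+s^2)^{-p_2/4}$ after rescaling. The resulting constant $\L_1^{-1}$ comes from $\prod_k|\l_k|^{-1/p_k}$ together with the Hölder normalization.

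\textbf{Fourth case.} When $p_1<2$ (one dominant eigenvalue), I instead drop $\l_1$ from Hölder. I bound $(1+4\l_1^2t^2)^{-1/4}\le(2|\l_1|t)^{-1/2}$ and apply Hölder to the rest with the exponents $q_j$, $j\ge2$. Then $|\l_1|\ge\L_2$ and $\L_1\asymp|\l_1|$, so the prefactor is $(\L_1\L_2)^{-1/2}$. The extra $t^{-1/2}$ turns the exponents into $q_2/2-1/2$, which produces $C_{q,2}$ after splitting at $1/(2\L_2)$.

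The main obstacle is this last Hölder bookkeeping: tracking the exact exponents and boundary cases ($p_1=2$, $q_2$ near 1) so that all the integrals stay finite and the constants collect into 35. I would verify that part carefully, treating $k=1,2$ exactly and bounding all $k\ge3$ uniformly.
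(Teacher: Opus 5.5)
Your outline follows the paper's proof closely. The shared steps are Petrov's Lemma~1.16, the split at $|t|=1/(2\L_1)$, monotonicity of the $\mu$-exponent beyond the split, and H\"older with weights $1/p_k$ for Lemma~\ref{Lem:LambdaIntEst}. For $p_1<2$ you also match it: split at $1/(2\L_2)$, bound the $\l_1$-factor by $(2|\l_1|t)^{-1/2}$, and use H\"older with weights $1/q_k$. Your small-$t$ step is a cleaner variant: using the $\l$-product as well gives $|\phi_S(t)|\le e^{-(\L_1^2+\norm{\mu}{2}^2)t^2/4}$, so you never need the Bobkov--Chistyakov estimate. Two steps, however, do not go through.

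\textbf{The factor $e^{-\norm{\mu}{2}^2/(4\L_1^2)}$ cannot be reached.} Your formula for $|\phi_S|$ is the correct one; at $\l=0$ it must reduce to $e^{-\mu^2t^2/2}$. The paper's proof writes $2\mu_k^2t^2$ where $\mu_k^2t^2/2$ belongs, and that factor-$4$ slip is the only reason it arrives at $\frac14$. With the correct formula, the exponent at $t=1/(2\L_1)$ is $\sum_k\mu_k^2/(8(\L_1^2+\l_k^2))$. This lies between $\norm{\mu}{2}^2/(16\L_1^2)$ and $\norm{\mu}{2}^2/(8\L_1^2)$; termwise it is $\ge\mu_k^2/(16\L_1^2)$, not $\ge\mu_k^2/(8\L_1^2)$ as you wrote. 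No extra care recovers $\frac14$, because the stated inequality is false. Take $\l_1=\l>0$, $\mu_1=\mu$, and all other coefficients $0$. Then $S=\l(Z_1+a)^2-\l-\l a^2$ with $a=\mu/(2\l)$, and
\[
Q_S(\e)\ \ge\ \bb P\paren{|Z_1+a|\le\sqrt{\e/\l}}\ =\ \frac{2}{\sqrt{2\pi}}\,e^{-\mu^2/(8\l^2)}\sqrt{\e/\l}\,\paren{1+o(1)},\qquad \e\to0^+ .
\]
Here $p_1=1$ and $\L_2=0$, so only the third branch applies and $\e\,\mathcal I(\e,\l)=2\e\l^{-1}\sqbrace{(2\l/\e)^{1/2}-1}$. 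The right-hand side is therefore $35\e(\l^2+\mu^2)^{-1/2}+70\sqrt2\,e^{-\mu^2/(4\l^2)}\sqrt{\e/\l}\,(1+o(1))$. The bound fails for all small $\e$ once $e^{\mu^2/(8\l^2)}>70\sqrt\pi$, e.g.\ $\l=1$, $\mu=7$, and no constant above $\frac18$ can appear in that exponent. What your argument actually proves, as does the paper's once its characteristic function is corrected, is the theorem with $\frac1{16}$ in place of $\frac14$.

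\textbf{The per-factor H\"older estimate is too lossy.} After the substitution $s=2|\l_k|t$, the $k$-th factor is $\paren{(2|\l_k|)^{-1}\int_{p_k^{-1/2}}^{2|\l_k|/\e}(1+s^2)^{-p_k/4}\d s}^{1/p_k}$. The prefactor product $\prod_k|\l_k|^{-1/p_k}=\L_1^{-1}\prod_k p_k^{1/(2p_k)}$ is not $\L_1^{-1}$; for $N$ equal eigenvalues it equals $\sqrt N/\L_1$. It is compensated only because each $s$-integral is $O(p_k^{-1/2})$. Your bound $(1+s^2)^{-p_k/4}\le s^{-p_k/2}$ destroys this at the lower endpoint $s=p_k^{-1/2}$, where it replaces a quantity $\le1$ by $p_k^{p_k/4}$. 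It yields only $\norm{f_k}{L^{p_k}}\le(2\L_1)^{-1/p_k}p_k^{1/4-1/p_k}A_{p,k}^{1/p_k}$, so $k=1,2$ alone carry an extra factor $p_1^{1/4-1/p_1}p_2^{1/4-1/p_2}\approx\sqrt N$. In that equal-eigenvalue case the true integral and $\mathcal I$ are both $\asymp1/\L_1$.

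The missing idea is the paper's. Substitute $u=1+4\l_k^2t^2$ and use $u/(u-1)\le p_k+1$ on $u\ge1+1/p_k$, i.e.\ $(u-1)^{-1/2}\le(p_k+1)^{1/2}u^{-1/2}$. This gives $I_k\le(3/2)^{3/2}\L_1^{-1}(1-2/p_k)^{-1}\sqbrace{1-(\e/(2\L_1))^{p_k/2-1}}$, uniformly of order $1/\L_1$. Then $\prod_{k\ge3}(1-2/p_k)^{-1/p_k}\le3$, since $p_k\ge k$. Your fourth case needs the same device in the form $(u-1)^{-3/4}\le(q_k+1)^{3/4}u^{-3/4}$.

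For $k\ge3$, the comparison that works is $(1+4\l_k^2t^2)^{-p_k/4}\le(1+4\l_2^2t^2)^{-p_2/4}$. This holds because $\l_k^2p_k=\l_2^2p_2$ and $p\mapsto(1+c/p)^p$ is increasing. Merely lowering the exponent to $p_2$ at fixed $\l_k$ keeps the $t$-scale $1/|\l_k|$ and again loses an unbounded factor.
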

\begin{thm}[User-friendly Concentration Function Estimate] Given the same setup as Theorem \ref{Thm:Full Bound}, we have the estimate:
\label{Thm:User Friendly Bound}

\begin{align}
Q_{S}(\e)
&\leq 35 \e \sqbrace{ \paren{\frac{1}{\L_1^2 + \norm{\mu}{2}^2}}^{1/2}  + \mathbbm{1}{\{\e \leq 2 \L_1\}}\exp\paren{-\frac{1}{4}\frac{\norm{\mu}{2}^2}{\L_1^2}}\mathcal{J}(\e, \lambda)},
\end{align}
where $\mathcal{J}(\e, \lambda)$ is given by
\begin{equation}
\mathcal{J}(\e, \l) ~:=~
\begin{dcases}
    \frac{1}{\L_1}\frac{1}{\frac{1}{2} - \frac{1}{p_1}} & p_1 > 2, 
    \\ 
    \frac{1}{\L_1}\log\paren{\frac{2\L_1}{\e}}^{\frac{1}{2} + \frac{1}{p_2}} &p_1 = 2,
    \\ 
    \frac{1}{\L_1^{\frac{3}{2} - \frac{1}{p_1}}}\frac{1}{\frac{1}{p_1} - \frac{1}{2}} \frac{1}{\e^{\frac{1}{p_1} - \frac{1}{2}}}
     & 1 \leq p_1 < 2,  \\
    \frac{1}{(\L_1\L_2)^{1/2}}\frac{1}{1- \frac{1}{q_2}} & 1 \leq p_1 < 2, \ q_2 > 1.
\end{dcases}
\end{equation}
\end{thm}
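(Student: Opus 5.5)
Theorem \ref{Thm:User Friendly Bound} will be deduced directly from Theorem \ref{Thm:Full Bound}. The plan is to show that, case by case, $\mathcal{I}(\e,\l) \le \mathcal{J}(\e,\l)$ whenever $\e \le 2\L_1$. The factor $35\e[\cdots]$, the Gaussian term, and the exponential prefactor are common to both statements, so only $\mathcal{I}$ needs to be compared with $\mathcal{J}$. Throughout, write $x := \e/(2\L_1) \in (0,1]$ and $y := \e/(2\L_2)$.

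\textbf{The case $p_1 > 2$.} Here $p_i \ge p_1 > 2$ for every $i$, since $|\l_i|$ is nonincreasing. The exponent $p_i/2 - 1$ is positive and $x \le 1$, so $1 - x^{p_i/2-1} \in [0,1]$. Hence $A_{p,i} \le (1/2 - 1/p_i)^{-1}$. Because $p_2 \ge p_1$, we have $1/2 - 1/p_2 \ge 1/2 - 1/p_1$, so both $A$-factors are bounded by $(1/2-1/p_1)^{-1}$. The exponents satisfy $1/p_1 + 1/p_2 \le 1$, and the base is at least $2 \ge 1$. Therefore
\[
A_{p,1}^{1/p_1}A_{p,2}^{1/p_2} \le (1/2-1/p_1)^{-(1/p_1+1/p_2)} \le (1/2-1/p_1)^{-1},
\]
which gives the first line of $\mathcal{J}$.

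\textbf{The case $p_1 = 2$.} Here $\mathcal{I}$ and $\mathcal{J}$ coincide, so there is nothing to prove.

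\textbf{The case $1 \le p_1 < 2$ (third line).} Now $p_1/2 - 1 < 0$, so
\[
B_{p,1} = (1/p_1-1/2)^{-1}\paren{x^{p_1/2-1} - 1} \le (1/p_1-1/2)^{-1} x^{-(1-p_1/2)}.
\]
Raising to the power $1/p_1$ gives
\[
B_{p,1}^{1/p_1} \le (1/p_1 - 1/2)^{-1/p_1}\, x^{-(1/p_1 - 1/2)}.
\]
Substituting $x = \e/(2\L_1)$ produces exactly the factor $\L_1^{1/p_1-1/2}\e^{-(1/p_1-1/2)}$, up to $2^{1/p_1-1/2} \le \sqrt 2$. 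Combined with the prefactor $1/\L_1$, this yields the power $\L_1^{-(3/2-1/p_1)}$ appearing in $\mathcal{J}$.

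For the factor $A_{p,2}^{1/p_2}$ (or a $B$-type factor if $p_2 < 2$), and for the constants $(\cdot)^{-1/p_1}$ versus $(\cdot)^{-1}$, I would argue as follows. Since $1/p_1 + 1/p_2 \le 1$ and $1/p_1 \ge 1/2$, we get $1/p_2 \le 1/2$, so $p_2 \ge 2$ and the second factor is of $A$-type. That factor is small only if $p_2$ is close to $2$, which would force $1/p_1 \le 1/2$ and hence $p_1 = 2$, contradicting $p_1 < 2$. I expect this to be the main obstacle: showing that when $p_1 < 2$ the quantity $1/2 - 1/p_2$ is bounded below by a multiple of $1/p_1 - 1/2$. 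Indeed, $1/2 - 1/p_2 \ge 1/2 - (1 - 1/p_1) = 1/p_1 - 1/2$. This lets $A_{p,2}^{1/p_2}$ be controlled by a power of $(1/p_1-1/2)^{-1}$, with the total exponent at most $1$.

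\textbf{The case $1 \le p_1 < 2$, $q_2 > 1$ (fourth line).} Here $q_2/2 - 1/2 > 0$. If $y \le 1$, then $C_{q,2} \le (1-1/q_2)^{-1}$. The remaining term is $1 \le (1-1/q_2)^{-1}$, so the sum is at most $2(1-1/q_2)^{-1}$.

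Finally, any leftover numerical constants (such as $\sqrt 2$ and the factor $2$ above) must be shown to be absorbed. If the constant $35$ in Theorem \ref{Thm:Full Bound} was not tight, this can be done by adjusting it; otherwise I would check whether the statement of Theorem \ref{Thm:Full Bound} already carries slack that covers these factors. This bookkeeping, together with the exponent comparison in the third case, is where I expect the work to lie.
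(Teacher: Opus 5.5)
Your route is the paper's: deduce Theorem 2 from Theorem 1 by comparing $\mathcal{I}$ with $\mathcal{J}$ line by line. The cases $p_1>2$ and $p_1=2$ are complete. In the third line, your inequality $\tfrac12-\tfrac1{p_2}\ge\tfrac1{p_1}-\tfrac12$, together with the total exponent $\tfrac1{p_1}+\tfrac1{p_2}\le 1$, controls the coefficients more cleanly than the paper's conjugate-exponent and monotonicity argument. What is missing is the last step. You arrive at $\mathcal{I}\le 2^{1/p_1-1/2}\mathcal{J}$ in the third line and $\mathcal{I}\le 2\mathcal{J}$ in the fourth, and you never remove these factors.

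They cannot be removed using only the statement of Theorem 1, so neither of your suggested fixes works as stated. Take exactly two nonzero $\lambda_k$ with $p_1<2$. Then $\tfrac12-\tfrac1{p_2}=\tfrac1{p_1}-\tfrac12$, and $\mathcal{I}/\mathcal{J}\to 2^{1/p_1-1/2}>1$ as $\varepsilon\to0$. In the fourth line, $\mathcal{I}/\mathcal{J}\to 2-1/q_2>1$. So for small $\varepsilon$ the bound of Theorem 1 is strictly weaker than that of Theorem 2. Its statement has no slack to spend, and enlarging $35$ proves a different theorem.

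You must reopen the proof of Theorem 1. There $Q_S(\varepsilon)\le 2(96/95)^2\,\varepsilon\,\bigl(\mathrm{I}+\mathbbm{1}\{\varepsilon\le 2\Lambda_1\}\mathrm{II}\bigr)$, and the paper rounds $2(96/95)^2\approx 2.04$ up to $4.4$. Combined with the factor $6$ from Lemma 1, the chi-square term really carries a constant of about $12.3$. Since $2\cdot 12.3<35$, your factors are absorbed.

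Alternatively, the proof of Lemma 1 in fact produces sharper bounds: coefficients $(2/p_1-1)^{-1/p_1}(1-2/p_2)^{-1/p_2}$ in its Case 2.1, and leading constant $3$ in its Case 2.2. Running your estimates on these gives $\int_{1/(2\Lambda_1)}^{1/\varepsilon}\prod_k(1+4\lambda_k^2t^2)^{-1/4}\,\mathrm{d}t\le 6\mathcal{J}$ directly.

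The paper's appendix deduction uses the same case split but passes over exactly these two factors. Its Case 3 silently uses $(2/p_1-1)^{-1}$ where $B_{p,1}$ carries $(1/p_1-1/2)^{-1}$. Its Case 4 stops at $(\Lambda_1\Lambda_2)^{-1/2}\bigl(1+(1-1/q_2)^{-1}\bigr)$ and declares this sufficient. Your bookkeeping is the accurate one; it only needs the step above.
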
 
The only difference between Theorem \ref{Thm:User Friendly Bound} and Theorem \ref{Thm:Full Bound} is the replacement of the simpler, easier to understand $\mathcal{J}(\e,\l)$ with the more complicated but ultimately more accurate, and sometimes better-behaved $\mathcal{I}(\e, \l)$. It is worth noting that our piecewise expressions are not disjoint; we obtain a bound for the case $1 \leq p_1 < 2$ (valid for all sub-cases) as well as the case $1 \leq p_1 < 2$ and $q_2 > 1$. These bounds are different, but both are valid, and so one may take the minimum of the two in applications. Finally, the constant 35 is by no means optimal and can definitely be reduced if one tracks constants more carefully through the course of the proof.
\newline 
\newline 
The difference between $\mathcal{I}$ and $\mathcal{J}$ is most pronounced in certain critical/edge cases, as we will see in the next section. The proof of Theorem~\ref{Thm:Full Bound} follows from a classical result connecting the concentration function to the characteristic function: Lemma 1.16 of~\cite{Petrov} states that for any random variable $Y$,
\begin{equation}\label{eq:Petrov-anti-concentration}
Q_Y(\e) \le \left(\frac{96}{95}\right)^2\max\{\e, 1/a\}\int_{-a}^{a} |\phi_Y(t)|dt,\quad\mbox{for all}\quad \e \ge 0, a \ge 0.
\end{equation}
Independence of the $Z_k, k\ge 1$ implies a product structure for $\phi_S(t)$, but controlling the integral is subtle depending on the cases listed in Theorem~\ref{Thm:Full Bound}. The full proof can be found in Section \ref{Sec: Proof}, with an outline in Section \ref{ProofTech}. It should be noted that the proof approaches of~\cite{bobkov2020two} and \cite{gotze2019large} are similar to ours.
The proof that Theorem \ref{Thm:Full Bound} implies Theorem \ref{Thm:User Friendly Bound} is brief and may be found in the appendix; see Section~\ref{Pf: Thm 1 Implies Thm 2}. 
\newline 
\newline 
Given that $S$ can be viewed as a sum of infinitely many independent random variables and also as a quadratic form of independent standard Gaussian random variables, it is possible to apply general tools such as one of the elegant Kolmogorov-Rogozin  Carbery-Wright inequalities. However, neither of them yield a bound with the correct behavior in all cases and they sometimes provide a bound with significant slack; we defer the details to Sections \ref{Sec: KR Ineq} and \ref{Sec: CR Ineq}. It is worth noting that the Kolmogorov-Rogozin inequality can be derived from~\eqref{eq:Petrov-anti-concentration}, which is the main result we use directly in our proof - thus the slack in the bound it provides is not entirely unexpected.
\newline 
\newline
Before comparing our results to the existing ones, we first consider how random variable $S$ behaves for different choices of $(\lambda_k,\,\mu_k),\,k\ge 1$.
\begin{enumerate}[label=(S\arabic*)]
    \item If $\Lambda_1 = \|\lambda\|_2 = 0$ (i.e., pure Gaussian), then $S\sim N(0, \|\mu\|_2^2)$ which implies that the maximum value of density of $S$ is $(2\pi)^{-1/2}/\|\mu\|_2$, i.e., $Q_S(\e) = (2\pi)^{-1/2}\|\mu\|_2^{-1}\e$.\label{setting1}
    \item If $\|\mu\|_2 = 0$ and $\Lambda_2 = 0$ (i.e., chi-square), then $S\sim \lambda_1(\chi_1^2 - 1)$, the density of which is unbounded and $Q_S(\e) \asymp \e^{1/2}/\lambda_1$ as $\e\to0^+$.\label{setting2}
    \item If $\|\mu\|_2 = 0$ and $\lambda_1 = \lambda_2 = 1$ with $\Lambda_3 = 0$, then $S$ is a sum of two independent centered chi-square random variables which has a bounded density (because Exp(1) has a bounded density) and one should expect $Q_S(\e) \asymp \varepsilon$ as $\varepsilon\to0^+$ (with constants depending on $\lambda_1, \lambda_2$). \label{setting3}
    \item If $\|\mu\|_2 = 0$ and $\lambda_1 = -\lambda_2 = 1$ with $\Lambda_3 = 0$, then $S$ is a difference of two independent chi-square random variables which does {\em not} have a bounded density and it can be proved that $Q_S(\e) \asymp \e\ln(1/\e)$ as $\e \to 0^+$; see Section~\ref{sec:concentration function lower bound} for a proof.\label{setting4}
\end{enumerate}
Comparing settings~\ref{setting3} and~\ref{setting4}, it is interesting to note that the mismatch in the signs of $\lambda_1, \lambda_2$ plays a significant role in the behavior of the concentration function. In relation, the results of~\cite{bobkov2020two} apply for~\ref{setting3} but not~\ref{setting4}. Our bound applies in all cases because our bound is sign agnostic. Given that lower bounds are in the settings discussed above, we expect a good bound on $Q_S(\e)$ to at least match the orders in these settings. Also, note that comparing settings~\ref{setting2} and~\ref{setting3}, we expect the behavior of the concentration function changes from $\varepsilon$ to $\varepsilon^{1/2}$ as $\lambda_2\downarrow 0$. Capturing this behavior is one of the reasons why $\mathcal{I}(\cdot, \cdot)$ and $\mathcal{J}(\cdot, \cdot)$ depend on the size of $p_1$ and $q_2$; see Section~\ref{subsec:boundary-behavior} for more details. This relates to balancing of the $\lambda$ coefficients. The importance of balancing of coefficients has been studied elegantly using the framework of majorization and Schur-concavity/monotonicty (cf.~\cite{MarshallOlkin}). This viewpoint is explored thoroughly in \cite{TkoczSchurEntropy} who obtain various results, in particular the maximum of the density, for weighted sums of independent gamma random variables.
\newline 
\newline
Our bound in Theorems \ref{Thm:Full Bound}, \ref{Thm:User Friendly Bound} can be written as a sum of two terms, $Q_S(\e) \leq  35\sqbrace{Q_{G}(\e) + \mathbbm{1}\{\e \leq 2\Lambda_1\}Q_C(\e)}$ where
\begin{equation}\label{eq:Gaussian-and-chi-square-anti-concentration}
Q_G(\e) ~:=~ \e  \paren{\frac{1}{\L_1^2 + \norm{\mu}{2}^2}}^{1/2},
\quad\mbox{and}\quad
Q_C(\e) ~:=~ \e\mathcal{I}(\e,  \lambda)\exp\paren{-\frac{1}{4}\frac{\norm{\mu}{2}^2}{\L_1^2}}.
\end{equation}
which can be interpreted as Gaussian and chi-square ``components'' of the concentration function. In Setting~\ref{setting1}, $Q_C(\e) = 0$ and $Q_G(\e) = \e/\|\mu\|_2$ matching the right behavior. In Setting~\ref{setting2}, $p_1 = 1$, $p_2 = \Lambda_1^2/\lambda_2^2 = \infty$ and hence, $\mathcal{J}(\e, \lambda) = 2\Lambda_1^{-1}/\e^{1/2}$ matching the right behavior. In Settings~\ref{setting3} and~\ref{setting4}, $\mathcal{J}(\e, \lambda) = \L_1^{-1}(\log(2\L_1/\e))$ by taking $p_1= 2$. Interestingly, our bound cannot recover the right behavior for Setting~\ref{setting3} because our proof technique cannot take advantage of matching signs of $\lambda_1, \lambda_2$. In particular, $|\phi_S|$ depends only on the absolute values of the $\lambda_k$.
\paragraph{Comparison with the bounds of \cite{bobkov2020two} and \cite{gotze2019large}.} 
Theorem 1 and Theorem 2 of \cite{bobkov2020two} provide matching upper and lower bounds on the density $f_S(\cdot)$ of $S$ under their stated assumptions. Paraphrasing the results in our notation, Theorem 1 of~\cite{bobkov2020two} covers the case when $\mu = 0$ and $\l \geq 0$, and states that $\norm{f_{S}}{\infty} \asymp {\paren{\L_1 \L_2}^{-1/2}}$, which implies that
    \begin{equation}
        Q_S(\e) \lesssim  \frac{\e}{\paren{\L_1 \L_2}^{1/2}}.
    \end{equation}
    Theorem $2$ of~\cite{bobkov2020two} covers the case when $\mu \neq 0$ and $\l \ge 0$, but assumes an additional "balancing" condition, which in our notation becomes
    \begin{equation}
        \frac{1}{p_1^2} \leq \frac{1}{3}.
    \end{equation}
    This condition for example, ensures that for example $|\l_1| \ge |\l_2| \ge |\l_3| > 0$ (but is stronger). Under this assumption, their Theorem $2$ states $\norm{f_S}{\infty} \asymp {(4 \L_1^2 + \norm{\mu}{2}^2)^{-1/2},}$ which implies that
    \begin{equation}\label{eq:mu-not-zero-Bobkov}
        Q_S\paren{\e} \lesssim \frac{\e}{(4 \L_1^2 + \norm{\mu}{2}^2)^{1/2}}.
    \end{equation}
    As mentioned before, these assumptions are sometimes too restrictive in applications, and we obtain bounds under relaxed conditions. We are able to relax the non-negativity assumption on $\l$, and obtain bounds for all values of ${p_1}$. Our results show that $S$ has a bounded density under more general assumptions -- in particular, under either of the following conditions
    \begin{enumerate}
    \item ${1}/{p_1} < 1/2$;
    \item ${1}/{q_3} < 1$, i.e., $\l_3 \neq 0$.
    \end{enumerate}
    Our results also recover~\eqref{eq:mu-not-zero-Bobkov}; see Section \ref{Sec: Recovering Bobkov} for a proof. On the other hand, sign-agnostic nature of our bounds implies that they \emph{cannot} recover Theorem 1 of \cite{bobkov2020two}. But we note that no anti-concentration bound that is sign-agnostic can recover Theorem 1 of~\cite{bobkov2020two} as illustrated from Settings~\ref{setting3} and~\ref{setting4} above. 

    \subsection{Behavior of the Estimates at Boundaries}\label{subsec:boundary-behavior}
    As our estimates for $\mathcal{I}$ and $\mathcal{J}$ are piecewise, it is natural to understand their behavior at the respective boundaries in their definition. For example, at $p_1 = 2$, we witness a logarithmic transition - can this be achieved by taking $p_1 \to 2$ in either of the bounds we obtain for $p_1 > 2$ or $p_1 < 2$. The answer is \emph{yes}, with the caveat that we must consider the more complicated $\mathcal{I}$ from Theorem \ref{Thm:Full Bound}. 
However, such contiguity does not hold for the bound in Theorem \ref{Thm:User Friendly Bound}, which is one of its main weaknesses. We provide details in the analysis below.
    \begin{enumerate}
        \item $p_1 > 2$ and $p_1 \to 2^+$: For $p_1 > 2$, Theorem~\ref{Thm:User Friendly Bound} yields a linear rate of the form: 
        \begin{equation}
        \frac{\e}{\L_1}\frac{1}{1 - \frac{2}{p_1}}.
        \end{equation}
        If $p_1 \to 2^+$, however, this simplified bound diverges. In such cases, it is important to use the bound in Theorem $1$, as it recovers the appropriate logarithmic rate given in the case $p_1 = 2$. Indeed, we have:
        $$
        \frac{1}{\L_1}\frac{1}{1 - \frac{2}{p_1}}    \sqbrace{1 - \paren{\frac{2\L_1}{\e}}^{\paren{1 - \frac{p_1}{2}}}} = \frac{1}{\L_1} \frac{p_1/2}{p_1/2 - 1} \sqbrace{1 - \paren{\frac{2\L_1}{\e}}^{\paren{1 - \frac{p_1}{2}}}} \underset{p_1 \to 2^+}{\to} \frac{1}{\L_1}\log\paren{\frac{2\L_1}{\e}}.
        $$
        \item $p_1 = 2$: In this case, we obtain a rate of the form \begin{equation}\e (\log\paren{2\L_1/\e})^{\frac{1}{2} + \frac{1}{p_2}}.
        \end{equation}
        This rate is suboptimal in the special case where $\l_1$ and $\l_2$ have the same sign, but is otherwise optimal; see Section~\ref{sec:concentration function lower bound}. In some cases, for example, when $p_1$ is very close to $2$, this bound is better than the simplified bound obtained in the prior case (from Theorem \ref{Thm:User Friendly Bound}). However, some calculus shows it is always inferior to the bound we obtain in $p_1 > 2$ in Theorem \ref{Thm:Full Bound}. 
        \item $1\leq p_1 \leq 2$: In this case, the bound is the minimum of two other bounds; each of which is preferable in a distinct scenario:
        \begin{enumerate}
             \item The first is of the following form:
             $$
             \e\paren{\frac{1}{\L_1^{\frac{3}{2} - \frac{1}{p_1}}}\frac{1}{\frac{2}{p_1} - 1} \frac{1}{\e^{\frac{1}{p_1} - \frac{1}{2}}}} =\frac{1}{\frac{2}{p_1} - 1}\paren{\frac{\e}{\L_1}}^{\frac{3}{2} - \frac{1}{p_1}}.
             $$
             and it holds regardless of the values of $q_2$ and $\L_2$. This bound has no coefficient of the form ${\L_2}^{-1}$ (which in this case is large due to the assumption of $p_1$ being small), so it does not explode as $\L_2 \to 0^+$. However, it always yields sublinear concentration as $3/2 - {1}/{p_1} < 1$. Thus, we sacrifice decay in $\e$ but remove the dependence on $\L_2$. Notice also in the definition of $\mathcal{I}(\cdot, \cdot)$ that as $p_1 \to 2^{-}$, we obtain a bound matching the $p_1 = 2$ case, which can be seen much like in the case $p_1 > 2$ by computing limits. 
            \item The second bound behaves in the opposite manner and is of the form:
            \begin{equation}
            \frac{\e}{(\L_1\L_2)^{1/2}}\frac{1}{1 - \frac{1}{q_2}}.
            \end{equation}
            
            It requires $q_2 > 1$ which is equivalent to assuming $\l_3 \neq 0$ (but is weaker than assuming $p_1 \geq 3$). We obtain a linear rate in $\e$ but at the cost of a term ${\L_2}^{-1}$ so that the bound diverges when $\L_2 \to 0$, or when $q_2 \to 1$. 
        \end{enumerate}
        Thus, for small $\L_2$ and large $\e$, the first bound is preferable. For small $\e$ and large $\L_2$, then the second is preferable. 
    \end{enumerate}
    The bound $\mathcal{J}(\cdot, \cdot)$ is best summarized by the following flowchart:
    \begin{center}
    
    \begin{forest}
  for tree={math content,
    draw,
    rounded corners,
    align=center,
    edge={-},
    grow = east,
    l sep=20pt,
    s sep=15pt
  }
  [
    [$\frac{1}{p_1} < \frac{1}{2}$ ($\l$ Well Balanced)
      [Linear Rate:  \\ $ Q_S(\e) \lesssim\frac{\e}{\L_1} \frac{1}{\paren{1 - \frac{2}{p_1}}}$]
    ]
    [$\frac{1}{p_1} \mathrm{=} \frac{1}{2}$ ($\l$ Critical)
    [Linear-times-Log Rate: \\$Q_s(\e) \leq \frac{\e}{\L_1}\log
    \paren{2 \L_1/\e}^{1/p_2}$ ]
    ]
    [$ \frac{1}{2} < \frac{1}{p_1} \leq 1$ ($\l$ Imbalanced)
      [$\e$ small and $\frac{1}{q_2} < 1$ and $\L_2$ large
      [Linear Rate: \\ 
      $Q(\e) \lesssim \frac{\e}{(\L_1\L_2)^\frac{1}{2}} \frac{1}{1 - \frac{1}{q_2}}$]
      ]
      [$\e$ large or $\L_2$ small
      [Sublinear Rate: \\ $Q(\e) \lesssim \frac{\e^{\frac{3}{2} -\frac{1}{p_1}}}{\L_1} \paren{\frac{1}{\frac{2}{p_1} - 1}}$ ]
      ]
    ]
  ]
\end{forest}
\end{center}
    Although these balancing conditions may seem arbitrary, they stem from integrability properties of the modulus of the chi-square characterstic function $\prod_{k=1}^\infty {(1 + 4\l_k^2t^2)^{-1/4}}$. In cases where $p_1$ is large or $q_1$ is large, we can obtain linear rates if desired because our balancing conditions ensure that $\l_3 \neq 0$ and hence
        $$
        \prod_{k=1}^{\infty} \frac{1}{(1 + 4\lambda_k^2t^2)^{1/4}} \leq \frac{1}{4^{3/4}|\l_1 \l_2 \l_3|^{1/2}t^{3/2}}.
        $$
        which is integrable on $(a, +\infty)$ for any $a > 0$. However, using such a bound directly introduces an explicit dependence on the minimum eigenvalue, which we wish to avoid whenever possible. 
    
\section{Sketch of the Proof} \label{ProofTech}
As mentioned, our proof begins by invoking Lemma 1.16 of \cite{Petrov} as in equation \eqref{eq:Petrov-anti-concentration}. Fixing $\e > 0$ and setting $a = \e$ in this bound gives:
\begin{equation}
\label{eq: simplified petrov ac bound}
Q_X(\e) \leq \paren{\frac{96}{95}}^2 \e \int_{-\frac{1}{\e}}^{\frac{1}{\e}}|\phi_X(t)| \d t.
\end{equation}
If $X = \sum X_i$ is a sum of independent random variables, the right hand side is straightforward to compute as $\phi_{X}$ is simply the product of the individual $\phi_{X_i}$. In our case, a brief computation shows that:
$$
|\phi_{\lambda_k (Z^2-1) + \mu_k Z}(t)| = \ \frac{\exp\paren{-\frac{2\mu_k^2t^2}{1+4\lambda_k^2t^2}}}{(1+4\lambda_k^2t^2)^{1/4}},
$$
so that:
$$
|\phi_{S}(t)| = \frac{\exp\paren{-\sum_{k=1}^{\infty}\frac{2\mu_k^2t^2}{1+4\lambda_k^2t^2}}}{\prod_{k=1}^{\infty}(1+4\lambda_k^2t^2)^{1/4}}.
$$
To estimate this, we apply the following heuristics:
\begin{enumerate}
    \item For $t$ such that $4\lambda_k^2t^2$ is of constant order, we have:
$$
|\phi_S(t)| \approx \exp\paren{-t^2 \sqbrace{2\sum_{k=1}^\infty\mu_k^2}}.
$$
The right hand side of the above is the characteristic function of a Gaussian with variance $\norm{\mu}{2}^2$. Thus, for small $t$, the characteristic function is essentially that of a Gaussian. In the notation of equation \eqref{eq:Gaussian-and-chi-square-anti-concentration}, this gives us the $Q_G$ term. 
\item 
On the other hand, if $t$ is taken "large":
$$
|\phi_S(t)| \approx
\frac{\exp\paren{-\sum_{k=1}^{\infty} \frac{\mu_k^2}{4\l_k^2}}}{\prod_{k=1}^{\infty}(1+4\lambda_k^2t^2)^{1/4}} = C(\mu,\l) \prod_{k=1}^{\infty}\frac{1}{(1 + 4 \l_k^2t^2)^{1/4}}
$$
since $\frac{t^2}{1 + 4 \lambda_k^2t^2} \to \frac{1}{4\lambda_k^2} $ as $t\to \infty$. 
In this regime, the integrand behaves like that of the characteristic function of a weighted sum of independent chi-square random-variables, multiplied by a prefactor $C(\mu, \l)$ that decays exponentially as a function of the ratio $\frac{\norm{\mu}{2}^2}{\L_1^2}$. In the notation of equation \eqref{eq:Gaussian-and-chi-square-anti-concentration}, this gives us the term $Q_C$. 
\end{enumerate}
Adding these two terms and applying \eqref{eq: simplified petrov ac bound} gives
\begin{equation}
Q_X(\e) \lesssim \e \int_{-1/\e}^{1/\e} |\phi_X(t)|\d t \approx \e\sqbrace{\underbrace{\int_{t \text{ small}} \exp\paren{-t^2 \sqbrace{2\sum_{k=1}^\infty\mu_k^2}}\d t}_{\text{Gaussian Part}}+ C(\mu,\lambda)\underbrace{\int_{t \text{ large}}\frac{1}{\prod_{k=1}^{\infty}(1+4\lambda_k^2t^2)^{1/4}} \d t}_{\text{Chi Square Part}}}.
\end{equation}
The first integral is straightforward to compute, being the integral of a Gaussian function over some interval. Estimating the second integral is more involved and its behavior is highly dependent on the relative size of the $\lambda_k$. For example, if all $\lambda_k$ but $\lambda_1, \lambda_2$ are $0$, then the integral over the whole real line (corresponding to $\e \to 0^+$) will diverge, and we cannot obtain a linear in $\e$ with this technique. On the other hand, if $\lambda_1, \lambda_2, \lambda_3 \neq 0$ then said integral will converge and give us a linear rate. The estimation of this integral is the content of Lemma \ref{Lem:LambdaIntEst}: 
\begin{lemma} 
\label{Lem:LambdaIntEst}
    Let $\mathcal{I}$ be defined as in Theorem \ref{Thm:Full Bound}. Then:
    \begin{equation}
      \int_{\frac{1}{2\L_1}}^\frac{1}{\e} \prod_{k=1}^\infty \frac{1}{(1 + 4 \l_k^2 t^2)^{1/4}}\d t  \leq 6 \mathcal{I}(\e, \lambda).
    \end{equation}
\end{lemma}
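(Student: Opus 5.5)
The plan is to bound the integrand by a weighted product and then apply H\"older's inequality with exponents dictated by the weights $p_j$ (or $q_j$). The starting observation is the identity
\[
\prod_{k=1}^\infty (1+4\l_k^2t^2)^{-1/4}=\prod_{j\ge1}\Big[(1+4\l_j^2t^2)^{-p_j/4}\Big]^{1/p_j}.
\]
Since $\sum_j 1/p_j=1$, the generalized H\"older inequality gives
\[
\int_{1/(2\L_1)}^{1/\e}\prod_k(1+4\l_k^2t^2)^{-1/4}\,\d t\le\prod_j\Big(\int_{1/(2\L_1)}^{1/\e}(1+4\l_j^2t^2)^{-p_j/4}\,\d t\Big)^{1/p_j}.
\]
Only the first two factors will be kept explicitly, since $p_1$ and $p_2$ appear in $\mathcal I$. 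To do this, I group $k\ge2$ into a single factor with exponent $1/p_2'$, where $1/p_2'=1-1/p_1$. Alternatively, I can use the elementary inequality $\prod_{k\ge 2}(1+4\l_k^2t^2)\ge 1+4\L_2^2t^2$ to collapse the tail. Which grouping to use will be decided by matching the exponents in $A_{p,2}$.

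The key one-dimensional estimate uses the substitution $s=2|\l_j|t$ together with $1+s^2\ge s^2$. This gives
\[
\int_{1/(2\L_1)}^{1/\e}(1+4\l_j^2t^2)^{-p_j/4}\,\d t\le\frac{1}{2|\l_j|}\int_{|\l_j|/\L_1}^{2|\l_j|/\e}s^{-p_j/2}\,\d s .
\]
Recall that $|\l_j|/\L_1=p_j^{-1/2}$. Evaluating the power integral produces three regimes:
\begin{itemize}
\item for $p_j>2$, a factor of the form $(\tfrac12-\tfrac1{p_j})^{-1}[1-(\e/2\L_1)^{p_j/2-1}]$, up to powers of $p_j$;
\item for $p_j<2$, the reverse expression $B_{p,j}$;
\item for $p_j=2$, a logarithm $\log(2\L_1/\e)$.
\end{itemize}
After raising to the power $1/p_j$, the prefactors $(2|\l_j|)^{-1/p_j}$ and $p_j^{\cdot/p_j}$ must combine to $1/\L_1$ times a constant. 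This works out because $|\l_j|=\L_1p_j^{-1/2}$ and $\sup_{p\ge1}p^{c/p}$ is bounded, which accounts for the factor $6$. The tail factors with $j\ge 3$ can be bounded crudely by the $j=2$ type estimate, since $p_j\ge p_2$. Choosing the lower limit $1/(2\L_1)$ is exactly what makes every term $(\e/2\L_1)^{\cdot}$ appear with the sign needed for the $A$ and $B$ forms. The case $p_1=2$ gives $\log^{1/2}$ from $j=1$ times the $j=2$ factor; here $p_2\ge 2$, and if $p_2=2$ as well this also gives a $\log^{1/p_2}$. This yields $\log^{1/2+1/p_2}$ after bounding $A_{p,2}$ by a log-type quantity.

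For the fourth case ($p_1<2$, $q_2>1$), the plan is different. I split off $\l_1$ by the bound $(1+4\l_1^2t^2)^{-1/4}\le(2|\l_1|t)^{-1/2}$, and note that $|\l_1|\ge\L_1/\sqrt2$ when $p_1<2$. I then apply the same H\"older argument to $\prod_{k\ge2}$ with weights $1/q_k$, on the range where $2\L_2 t\ge1$. This gives integrals of the type $\int t^{-1/2-q_j/2}\,\d t$, which produce $C_{q,2}$ with exponent $q_2/2-1/2$, and these combine with the prefactor to give $(\L_1\L_2)^{-1/2}$. On the initial range $t\in[1/(2\L_1),1/(2\L_2)]$, I bound the tail product by $1$. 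This gives $\int t^{-1/2}\,\d t\lesssim \L_2^{-1/2}$, which produces the additive ``$1$'' term.

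I expect the main obstacle to be the bookkeeping of constants. Specifically, I must check that $\prod_j$ of the $p_j$-dependent prefactors stays uniformly bounded, so that the final bound is at most $6\,\mathcal I$, with an infinite product of $1/p_j$-powered factors controlled by $\sum_j 1/p_j=1$. I must also handle the degenerate cases, namely when $p_j=\infty$ (so $\l_j=0$), and the matching exponent when the $j\ge2$ tail is grouped.
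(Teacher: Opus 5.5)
Your architecture matches the paper's. You apply H\"older with exponents $p_j$, split into regimes by $p_j\gtrless 2$, and in the last case bound the $\lambda_1$ factor by $(2|\lambda_1|t)^{-1/2}$ and apply H\"older with weights $q_k$. There is, however, a genuine gap: your key one-dimensional estimate is too lossy, so the constant bookkeeping you defer cannot be completed.

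\textbf{Where the estimate fails.} Replacing $(1+s^2)^{-p_j/4}$ by $s^{-p_j/2}$ on $[p_j^{-1/2},\,2|\lambda_j|/\varepsilon]$ costs a factor of about $p_j^{p_j/4}$ at the lower endpoint. For $p_j>2$ you get exactly
\[
\frac{1}{2|\lambda_j|}\int_{p_j^{-1/2}}^{2|\lambda_j|/\varepsilon}s^{-p_j/2}\,\mathrm{d}s=\frac{p_j^{p_j/4}}{2\Lambda_1(p_j/2-1)}\Bigl[1-\Bigl(\frac{2\Lambda_1}{\varepsilon}\Bigr)^{1-p_j/2}\Bigr],
\]
so after the $1/p_j$-th root each factor carries $p_j^{1/4}$, not $p_j^{O(1/p_j)}$.

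\begin{itemize}
\item Since $p_j\ge j$, $\prod_j p_j^{1/4}=\infty$ as soon as infinitely many $\lambda_j\neq 0$.
\item With $N$ equal nonzero $\lambda_j$, your bound grows like $N^{N/4-1}/\Lambda_1$, whereas the true integral is $O(1/\Lambda_1)$.
\item The loss is harmless only when $p_j=O(1)$, e.g.\ for $j=1$ when $p_1\le 2$.
\item The same failure occurs in your fourth case. Dropping the $1$ in $1+4\lambda_k^2t^2$ on $t\ge 1/(2\Lambda_2)$ produces $\prod_{k\ge2}q_k^{1/4}$.
\end{itemize}

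The mechanism you invoke would not suffice even if the per-factor powers were $p_j^{c/p_j}$. In that case
\[
\prod_j p_j^{c/p_j}=\exp\Bigl(c\sum_j p_j^{-1}\log p_j\Bigr),
\]
which is unbounded; it equals $N^c$ for $N$ equal weights. What you need is that each H\"older factor, before taking the root, is at most an absolute constant times $\Lambda_1^{-1}(1-2/p_j)^{-1}[\cdots]$. Then $\sum_j 1/p_j=1$ does the rest.

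\textbf{The missing idea.} You must keep the decay of $(1+s^2)^{-p_j/4}$ for $s\le 1$. This function has width $\asymp p_j^{-1/2}$, which exactly cancels the prefactor $1/(2|\lambda_j|)=p_j^{1/2}/(2\Lambda_1)$. The paper achieves this as follows.
\begin{itemize}
\item Substitute $u=1+4\lambda_k^2t^2$ and keep the factor $u^{-p_k/4}$.
\item Bound only the Jacobian: $(u-1)^{-1/2}\le (p_k+1)^{1/2}u^{-1/2}$, valid because $u\ge 1+1/p_k$ on the range of integration. Equivalently, $1/s\le (p_k+1)^{1/2}(1+s^2)^{-1/2}$ for $s\ge p_k^{-1/2}$.
\item Integrating $u^{-p_k/4-1/2}$ from $1+1/p_k$ gives, uniformly in $k$,
\[
I_k\le \Bigl(\tfrac32\Bigr)^{3/2}\Lambda_1^{-1}\,(1-2/p_k)^{-1}\Bigl[1-\Bigl(\frac{2\Lambda_1}{\varepsilon}\Bigr)^{1-p_k/2}\Bigr].
\]
\item For $k\ge 3$ one has $p_k\ge 3$, so the tail product is at most $3$.
\item In your fourth case the analogous step is $(u-1)^{-3/4}\le (q_k+1)^{3/4}u^{-3/4}$.
\end{itemize}

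\textbf{On your grouping options.} Only the full H\"older product can work. Collapsing the tail via $\prod_{k\ge2}(1+4\lambda_k^2t^2)\ge 1+4\Lambda_2^2t^2$ leaves an integrand of order $t^{-1}$. That gives a $\log(1/\varepsilon)$ bound, which cannot produce the $\varepsilon$-free bound required when $p_1>2$.
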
  
\begin{proof}
    See the computations in Section \ref{prf:LambdaIntEst}. 
\end{proof}
The proof of this estimate is lengthy but straightforward. Our approach follows that of Lemma A2 from \cite{gotze2019large}, which hinges upon an application of H\"older's inequality with an intelligent choice of weights. However, their proof only covers the case when $p_1 \geq 3$ or $\frac{{\l_1}^2}{\L^2} \leq \frac{1}{3}$. Our proof provides estimates for the remaining cases, based on casework depending on the relative sizes of $p_k$. The most involved case is showing we can obtain a linear rate when $\frac{1}{p_1} > \frac{1}{2}$ but $\l_3 \neq 0$, e.g. the final case in the definitions of $\mathcal{J}, \mathcal{I}$. This case represents coefficients that are poorly balanced yet still numerous enough to provide linear concentration in $\e$. The proof technique in this case is essentially a one-step recursion and was inspired by the proof of Case 2 in Theorem 1 of \cite{bobkov2020two}. 
    \section{Applications}
    \label{Sec:Applications}
    In this section, we apply Theorem \ref{Thm:Full Bound} and Theorem \ref{Thm:User Friendly Bound} to two interesting examples. Throughout this section, we repeatedly use the notation of Section \ref{sec:limi-dist-U-stat}, where we discuss the limiting behavior of U-statistics.
    \subsection{Square of the Mean and General Rank 1 - U-statistics}
    \label{sec: rank 1 application}
    Let $X, X_1, \cdots, X_n$ denote an IID sample from some probability distribution $P_n$ with $\E[X] = \theta_n, \mathrm{Var}(X) = \sigma_n^2$. We wish to obtain an unbiased estimate of $\theta_n^2$. The corresponding U-Statistic is:
    \begin{equation}
    \widetilde{U}_n = \frac{1}{{n \choose 2}} \sum_{1 \leq i < j \leq n} X_iX_j,
    \end{equation}
    and its centered version (which we consider hereafter):
    \begin{equation}
        U_n = \frac{1}{{n \choose 2}} \sum_{1 \leq i < j \leq n} (X_iX_j - \theta_n^2),
    \end{equation}
    i.e., according to the setup in Section \ref{sec:Asymp Dist U} , we have kernel function $h(x,y) = xy  -\theta_n^2$.
    \newline 
    \newline
    The statistic $U_n$ above may seem artificial - the square of the mean is not necessarily an everyday quantity of interest. However, after applying the diagonalization procedure discussed in \ref{sec:Asymp Dist U}, \emph{every} U-statistic with a rank $1$ kernel can be written in such a form after a deterministic transformation (diagonalization). Thus, it is a simple yet broadly applicable starting point for any analysis. In the appendix, we show explicitly how $U_n$ exhibits a phase transition in its distribution, based on the parameters $\theta_n, \sigma_n$ - see - The calculation is essentially identical to one done in Section \ref{sec:Asymp Dist U} but is more transparent. 
\newline 
\newline 
    Following the notation of Section \ref{sec:limi-dist-U-stat}, we obtain by calculations in Appendix \ref{Sec: Eigenvalue Calculations} that:
    $$
    H_1(x) = x\theta_n - \theta_n^2 ,\quad  H_2(x,y) = xy - \theta_n(x+y)
    $$
    with a single non-constant eigenfunction 
    $$
    \phi_1(x) = \frac{x - \theta_n}{\sigma_n},
    $$
    and coefficients:
    $$
    a_1 = \theta_n \sigma_n, \quad b_1 = \sigma_n^2.
    $$
    So in summary:
    $$
    H_1(x) = a_1\phi_1(x), \quad H_2(x,y) = b_1\phi_1(x)\phi_1(y).
    $$
    This represents the simplest possible limiting behavior for a $U$-statistic - $H_2$ has one nontrivial eigenfunction and eigenvalue, and is a rank-one operator. Thus, as derived in \ref{sec:limi-dist-U-stat} the corresponding Gaussian approximation is
    $$
    W_n = \frac{2a_1}{\sqrt{n}}Z_1 +  \frac{b_1}{n} (Z_1^2 -1) - \frac{b_1^2}{n-1}(S^2 - 1) - \frac{b_1}{n(n-1)}.
    $$
    As mentioned in equation \eqref{eq: Adjusted Limit Stat} of Section \ref{sec:limi-dist-U-stat}, it suffices to estimate the concentration function of:
    \begin{align*}
   \widetilde{W}_n = \frac{2a_1}{\sqrt{n}}Z_1  + \frac{b_1}{n} (Z_1^2 -1) &=  \frac{2 \theta_n \sigma_n}{\sqrt{n}}Z_1 + \frac{\sigma_n^2}{n}
    (Z_1^2 - 1).
    \end{align*}
    This implies
    \[
    \frac{n^{1/2}}{\sigma_n^2}\widetilde{W}_n = \frac{2\theta_n}{\sigma_n}Z_1 + \frac{1}{\sqrt{n}}(Z_1^2 - 1), \quad \frac{n}{\sigma_n^2}\widetilde{W}_n = \frac{2n^{1/2}\theta_n}{\sigma_n}Z_1 + (Z_1^2 - 1),
    \]
    whose behavior depends only on that of the ratio $\frac{\theta_n}{\sigma_n}$.
    Let us take a moment to understand the limiting behavior of the statistic $\widetilde{W_n}$ and its dependence on $\theta_n, \sigma_n$. 
    \begin{enumerate}
        \item If $\theta_n/\sigma_n \gg n^{-1/2}$ (i.e., $n^{1/2}\theta_n/\sigma_n \to \infty$) as $n\to\infty$, and  $\frac{\theta_n}{\sigma_n} \to C$ for some $C \neq 0$, then $n^{1/2}\widetilde{W}_n/\sigma_n^2$ behaves like a mean zero Gaussian with variance $C^2$. 
        \item if $\theta_n/\sigma_n \asymp n^{-1/2}$ (i.e., $n^{1/2}\theta_n/\sigma_n \to c \in (0,\infty)$) as $n\to\infty$, then $n\widetilde{W}_n/\sigma_n^2$ behaves like a sum of centered Gaussian and a  centered chi-square random variable.
        \item Finally, if $\theta_n/\sigma_n \ll n^{-1/2}$ (i.e., $n^{1/2}\theta_n/\sigma_n\to0$) as $n\to\infty$, then $n\widetilde{W}_n/\sigma_n^2$ behaves like a centered chi-square random variable.
    \end{enumerate}
   
    \paragraph{Concentration Function of Normalized $\widetilde{W_n}$:} As above, we are interested in the concentration functions of the random variables:
    $$
     \frac{n^{1/2}}{\sigma_n^2}\widetilde{W}_n = \frac{2\theta_n}{\sigma_n}Z_1 + \frac{1}{\sqrt{n}}(Z_1^2 - 1), \quad \frac{n}{\sigma_n^2}\widetilde{W}_n = \frac{2n^{1/2}\theta_n}{\sigma_n}Z_1 + (Z_1^2 - 1).
    $$
    This is the sum of a Gaussian and a single chi-squared random variable. Thus, following the notation of Theorems \ref{Thm:Full Bound}, \ref{Thm:User Friendly Bound}, this corresponds to the case $p_1 = 1$ in the definition of $\mathcal{I}, \mathcal{J}$. Computing the relevant quantities and plugging into our bounds from Theorem \ref{Thm:Full Bound}/ Theorem \ref{Thm:User Friendly Bound} (they agree in this case) gives:
    \begin{equation}
    \label{Eqn: Q_Wn sqrt n}
    Q_{\frac{n^{1/2}}{\sigma_n^2}\widetilde{W}_n}(\e) \leq 35 \e \sqbrace{ \paren{\frac{1}{\frac{1}{n} + 4\frac{\theta_n^2}{\sigma_n^2}}}^{1/2}  + \mathbbm{1}{\left\{\e \leq \frac{2}{n^{1/2}}\right\}}\exp\paren{-n \frac{\theta_n^2}{\sigma_n^2}}\frac{2n^{1/4}}{\e^{1/2}}} ,
    \end{equation}
    and
    \begin{equation}
            \label{Eqn: Q_Wn n}
            Q_{\frac{n}{\sigma_n^2}\widetilde{W}_n}(\e) \leq 35 \e \sqbrace{ \paren{\frac{1}{1 + 4n\frac{\theta_n^2}{\sigma_n^2}}}^{1/2}  + \mathbbm{1}{\left\{\e \leq 2\right\}}\exp\paren{-n \frac{\theta_n^2}{\sigma_n^2}}\frac{2}{\e^{1/2}}}.
    \end{equation}
    Let us return to the three cases we discussed above and confirm that our estimate behaves congruously. Throughout, we assume $\e > 0$ is fixed as $n \to +\infty$.
    \begin{enumerate} 
        \item If $\frac{n^{1/2}\theta_n}{\sigma_n} \to \infty$ as $n \to \infty$ and $\frac{\theta_n}{\sigma_n} \to C$, then for the first term:
        $$
        \paren{\frac{1}{\frac{1}{n} + 4\frac{\theta_n^2}{\sigma_n^2}}}^{1/2} \rightarrow \frac{1}{2C},
        $$
        whereas for the second term
        $$
        \exp\paren{-n \frac{\theta_n^2}{\sigma_n^2}}\frac{2n^{1/4}}{\e^{1/2}} \to 0,
        $$
        giving the estimate: $$Q_{n^{1/2}\frac{\widetilde{W_n}}{\sigma_n^2}}(\e) \leq \frac{35}{2C}\e.$$
        This is Gaussian concentration; up to constants.
        \item If $\frac{n^{1/2}\theta_n}{\sigma_n}\to c \in \mathbb{R}$, then we examine $Q_{\frac{n}{\sigma_n^2}\widetilde{W}_n}$. For the first term we obtain:
        $$
     \paren{\frac{1}{1 + 4n\frac{\theta_n^2}{\sigma_n^2}}}^{1/2} \to \paren{\frac{1}{1 + 4c^2}}^{1/2},
        $$
        whereas for the second:
        $$
                \exp\paren{-n \frac{\theta_n^2}{\sigma_n^2}}\frac{2}{\e^{1/2}} \to \exp\paren{-c^2}\frac{2}{\e^{1/2}}.
        $$
        This yields a combined bound:
        $$
        35\sqbrace{\e\paren{\frac{1}{1 + 4c^2}}^{1/2} + 2\exp(-c^2)\e^{1/2}},
        $$
        which is a mix of Gaussian and chi-square behavior, with the relative size of each term depending on $c$
        \item If $n^{1/2}\theta_n/\sigma_n\to 0$ then the same analysis as above holds except we may replace $c$ with $0$. This gives us a bound of the form:
        $$
        35\sqbrace{\e + 2 \e^{1/2}},
        $$
        which has Chi-square behavior in the leading order term as $\e \to 0^+$ $(\e^{1/2})$.
    \end{enumerate}
    
    \subsection{Student's T-Statistic Kernel}
    \label{sec: t stat analysis}
    The following example is of particular interest due to the appearance of both positive and negative $\l_k$. 
    \newline 
    \newline 
    Let $X, X_1, \cdots, X_n$ be IID  with $\E[X] = 0, \E[X^2] = 1$, $\E[X^3] = \gamma^3$, $\E[X^4] = \kappa^4 < \infty$ and let $h(x,y) = xy^2 +x^2y$. Then:
    $$
    U_n = \frac{1}{{n \choose 2}}\sum_{i < j} X_iX_j^2 +X_i^2 X_j.
    $$
    This statistic is closely related to Student's T-statistic and self-normalized sums in general and is one of the main examples discussed in \cite{bentkusalberink}. Without loss of generality, we will assume that $\gamma^3 \geq 0$ for the remainder of the analysis. The case $\gamma^3 < 0$ is entirely analogous.
    \newline 
    \newline 
    By the calculations of Section \ref{Sec: Eigenvalue Calculations} in the appendix, we obtain:
    $$
    H_1(x) = \E[xX^2 + x^2 X] = x,
    $$
    and:
    $$
    H_2(x,y) = h(x,y) - H_1(x) - H_1(y) = xy^2 + x^2y - x - y = x(y^2 -1) + y(x^2 - 1).
    $$
    For notational brevity, let us define:
    $$
    \xi = \kappa^4 - 1.
    $$
    In this new notation, the coefficients $a_k$ are given by:
    $$
        a_1 = \frac{\paren{2 \paren{1 + \frac{2\gamma^{3}}{\xi^{1/2}}}}^{1/2}}{2}, \quad a_2 = \frac{\paren{2 \paren{1 - \frac{2\gamma^{3}}{\xi^{1/2}}}}^{1/2}}{2}.
    $$
    Note that $a_1^2 + a_2^2 = 1$.
    \newline 
    \newline
    The coefficients $b_k$ are given by:
    $$
     b_1 = \gamma^3 + \xi^{1/2}, \quad b_2 = \gamma^{3} - \xi^{1/2}.
    $$
    Thus, the corresponding Gaussian approximant is:
    \begin{align*}
    \widetilde{W_n}= \frac{2a_1}{\sqrt{n}}Z_1 + \frac{2a_2}{\sqrt{n}}Z_2  + \frac{b_1}{n} (Z_1^2 -1) + \frac{b_2}{n}(Z_2^2-1).
    \end{align*}
    As before, we examine the normalized statistic $n^{1/2}\widetilde{W_n}$, which is given by
    $$
    n^{1/2}\widetilde{W_n}= 2a_1 Z_1+ 2a_2Z_2  + \frac{b_1}{n^{1/2}} (Z_1^2 -1) + \frac{b_2}{n^{1/2}}(Z_2^2-1).
    $$
    Note that the statistic with normalization by $n$
      $$
    n\widetilde{W_n} = \frac{2a_1}{\sqrt{n}}Z_1 + \frac{2a_2}{\sqrt{n}}Z_2 + b_1(Z_1^2-1) + b_2(Z_2^2 - 1)
    $$
    cannot have a nontrivial distributional limit as $a_1,a_2$ are fixed nonzero constants. Thus, we do not consider its asymptotic behavior.
    \newline 
    \newline
    Moving to the notation of Theorem \ref{Thm:Full Bound}, we have:
    $$
    \mu_1 = 2a_1, \quad \mu_2 = 2a_2, \quad \l_1 = \frac{b_1}{n^{1/2}},\quad  \l_2 = \frac{b_2}{n^{1/2}}.
    $$
    We compute:
    $$
    \L_1^2 = \frac{1}{n}(b_1^2 + b_2^2) = \frac{1}{n}\sqbrace{\paren{\gamma^3 + \xi^{1/2}}^2 + \paren{\gamma^3 - \xi^{1/2}}^2} = \frac{2(\gamma^6 + \xi)}{n},
    $$
    and
    $$
    \norm{\mu}{2}^2 = \paren{(2a_1)^2 +(2a_2)^2} = 4.
    $$
    Finally:
    $$
    \frac{1}{p_1} = \frac{\l_1^2}{\L_1^2} = \frac{b_2^2}{2(\gamma^6 + \xi)} = \frac{(\gamma^3 + \xi^{1/2})^{2}}{2(\gamma^6+\xi)} = \frac{1}{2} + \frac{\gamma^{3}\xi^{1/2}}{2(\gamma^6+\xi)},
    $$
    $$
    \frac{1}{p_2} = \frac{\l_2^2}{\L_1^2} = \frac{b_2^2}{2(\gamma^6 + \xi)} = \frac{(\gamma^3 - \xi^{1/2})^2}{2(\gamma^6 + \xi)}  = \frac{1}{2} - \frac{\gamma^{3}\xi^{1/2}}{2(\gamma^6 + \xi)}.
    $$
    As we have exactly two eigenvalues, we must have that $p_1 = p_2 = 2$, or $p_1 < 2$ in which case $p_2 > 2$. Because the bounds obtained in these two cases are materially different, we divide the analysis into cases. Throughout, we assume that as $n \to \infty$, $p_1$ (and as a result $p_2$) remains fixed.
    \begin{description}
        \item[Case 1: ($\gamma = 0$, e.g. $p_1 = 2$)] 
    For the first case, this necessitates:
    $$
    \frac{1}{p_1} = \frac{1}{p_2} = \frac{1}{2} \implies \gamma^3 \xi^{1/2} = 0 \iff \gamma = 0 \text{ or } \xi = 0.
    $$
    Because $a_1,a_2$ are only well-defined when $\xi \neq 0$, we assume that $\gamma = 0$. We then obtain:
    $$
    b_1 = \xi^{1/2}, b_2 =-\xi^{1/2}, \quad a_1 = a_2 = \frac{1}{2^{1/2}}, \quad \L_1^2 = \frac{1}{n}(b_1^2 + b_2^2) = \frac{2\xi}{n}.
    $$
    Because $p_1 = 2$, we apply Theorem \ref{Thm:Full Bound} or Theorem \ref{Thm:User Friendly Bound} (as both give the same bound in this case) to obtain a logarithmic rate. More precisely, we obtain:
    $$
    Q_{n^{1/2}\widetilde{W_n}}(\e) \leq 35\e \sqbrace{\paren{\frac{1}{\frac{2\xi}{n} + 4}}^{1/2} + \mathbbm{1}\cbrace{{\e \leq 2^{3/2}\frac{\xi^{1/2}}{n^{1/2}}}}\exp\paren{-\frac{n}{2\xi}} \log\paren{\frac{2^{3/2}\xi^{1/2}}{n^{1/2}\e}}}.
    $$
    When we have that $\frac{\xi}{n} \to c^2 \in (0, +\infty)$, we obtain the limiting bound:
    $$
        Q_{n^{1/2}\widetilde{W_n}}(\e) \leq 35\e \sqbrace{\paren{\frac{1}{c^2 + 4}}^{1/2} + \mathbbm{1}\cbrace{\e \leq 2^{3/2} c}\exp\paren{-c^2} \log\paren{\frac{2^{3/2}c}\e}}.
    $$
    which corresponds to a mixed Gaussian - Chi square rate, owing to the linear combination of a linear and log-times-linear term. On the other hand, if the limit $c$ is $0$, then this bound reduces to:
    $$
    \frac{35\e}{2}
    $$
    which is exactly a Gaussian bound. 
    \item[Case 2: ($\gamma > 0$, e.g. $p_1 < 2$)] In this case, we have two possible bounds, namely the bound which can be obtained which can be obtained by application of Theorem 1, and the simpler bound which can be obtained by application of Theorem 2. For brevity, we simply state the bounds and asymptotics here, full computations can be found in our appendix. Borrowing the notation of \ref{Sec:Estimate} equation \eqref{eq:Gaussian-and-chi-square-anti-concentration}, we will only state estimates for the chi-square portion of the estimate, $\e Q_C(\e)$, as it is the leading order term in this case. 
    \begin{enumerate}
        \item \textbf{(Bounds obtained from Theorem \ref{Thm:Full Bound})}
        Plugging in the relevant variables into our bound from Theorem \ref{Thm:Full Bound}, we obtain an estimate for $Q_C(\e)$:
        \begin{equation}
            \frac{n^{1/2}}{[2(\gamma^6 + \xi)]^{1/2}}\paren{\frac{2(\gamma^6 + \xi)}{\gamma^{3}\xi^{1/2}}} \sqbrace{\paren{\frac{n\e^2}{4(\gamma^6 + \xi)}}^{{\frac{-\gamma^3\xi}{(\gamma^3 + \xi^{1/2})^2}}}-1}^{\frac{1}{2} + \frac{\gamma^3\xi^{1/2}}{2(\gamma^6 + \xi)}}
    \sqbrace{1 - \paren{\frac{n\e^2}{4(\gamma^6 + \xi)}}^{\frac{\gamma^3\xi}{(\gamma^3 + \xi^{1/2})^2}}}^{\frac{1}{2} - \frac{\gamma^3\xi^{1/2}}{2(\gamma^6 + \xi)}}
        \end{equation}
        Let us consider the case when:
    $$
    \frac{\gamma^6+\xi}{n} \to c^2, \quad c \neq 0
    $$
    This is equivalent to assuming that $\L_1$ converges to $|c|$, while the quantities $p_1,p_2$ remain fixed as $n \to \infty$, i.e. $n^{1/2}\widetilde{W_n}$ has a limit with a non-trivial chi-square part.. In such a case, we obtain Then, after fixing $2c >\e>0$ and passing to the limit, the above terms become:
    \begin{align}
    & \quad \frac{1}{c}\paren{\frac{2(\gamma^6 + \xi)}{\gamma^{3}\xi^{1/2}}} \sqbrace{\paren{\frac{\e^2}{4c^2}}^{{\frac{-\gamma^3\xi}{(\gamma^3 + \xi^{1/2})^2}}}-1}^{\frac{1}{2} + \frac{\gamma^3\xi^{1/2}}{2(\gamma^6 + \xi)}}
    \sqbrace{1 - \paren{\frac{\e^2}{4c^2}}^{\frac{\gamma^3\xi}{(\gamma^3 + \xi^{1/2})^2}}}^{\frac{1}{2} - \frac{\gamma^3\xi^{1/2}}{2(\gamma^6 + \xi)}} 
    \\&=\frac{1}{c}\paren{\frac{2(\gamma^6 + \xi)}{\gamma^{3}\xi^{1/2}}} \sqbrace{\paren{\frac{4c^2}{\e^2}}^{{\frac{\gamma^3\xi}{(\gamma^3 + \xi^{1/2})^2}}}-1}^{\frac{1}{2} + \frac{\gamma^3\xi^{1/2}}{2(\gamma^6 + \xi)}}
    \sqbrace{1 - \paren{\frac{\e^2}{4c^2}}^{\frac{\gamma^3\xi}{(\gamma^3 + \xi^{1/2})^2}}}^{\frac{1}{2} - \frac{\gamma^3\xi^{1/2}}{2(\gamma^6 + \xi)}}.
    \end{align}
    Examining this term more closely, we see that if $c$ is large relative to $\e$, the first term in square braces dominates, whereas if $c$ is small relative to $\e$, the coefficient $\frac{1}{c}$ is dominant. Furthermore, as $c \to +\infty$, we observe that the above term tends to $0$. This can be observed more generally by taking $\L_1 \to +\infty$ in the bound for $\mathcal{I}$ in Theorem $\ref{Thm:Full Bound}$. Alternatively, one may look at the computations of \ref{sec: t stat appendix analysis} to confirm that this is the case. 
    \item \textbf{(Bounds obtained from Theorem \ref{Thm:User Friendly Bound})}
    Applying Theorem \ref{Thm:User Friendly Bound} provides a simplified bound. However, as noted in the discussion of \ref{Sec:Estimate}, said bound does not recover the logarithmic rate as we send $\gamma \to 0^+$, which is one of its main deficiencies. However it is still useful to understand its behavior. making the relevant substitutions, we obtain an estimate of the form:
    \begin{equation}
            Q_C(\e) \leq \frac{(\gamma^6 + \xi)}{\gamma^3\xi^{1/2}}\paren{\frac{n^{1/2}}{2^{3/2}(\gamma^6 + \xi)^{1/2}}}^{\frac{3}{2} - \frac{\paren{\gamma^3 - \xi^{1/2}}^2}{2(\gamma^6 + \xi)}} \e^{-\frac{\gamma^3\xi^{1/2} - 2(\gamma^6 + \xi)}{2(\gamma^6 + \xi)}}.
    \end{equation}
    Note that taking $\gamma \to 0$ in this bound causes it to diverge. Thus, such a bound is best suited for cases when $\gamma$ is large, which in terms of eigenvalues means that the first eigenvalue dominates the remaining ones. As before, if we assume that:
    $$
    \frac{\gamma^6+\xi}{n} \to c^2, \quad c \neq 0
    $$
    with $p_1, p_2$ being fixed as $n \to \infty$, our estimate for $Q_C$ becomes:
    $$
     Q_C(\e) \leq \frac{(\gamma^6 + \xi)}{\gamma^3\xi^{1/2}}|c|^{\frac{3}{2} - \frac{\paren{\gamma^3 - \xi^{1/2}}^2}{2(\gamma^6 + \xi)}} \e^{-\frac{\gamma^3\xi^{1/2} - 2(\gamma^6 + \xi)}{2(\gamma^6 + \xi)}}
    $$
    For a further analysis, see Section \ref{sec: t stat appendix analysis}.
    \end{enumerate}

    \end{description}
    \section{Proof of Theorem 1}
\label{Sec: Proof}
\begin{proof}
The proof breaks naturally into several steps.
\begin{enumerate}
    \item{\textbf{The Convergence of and Characteristic Function of $S$}}
    We begin by observing that we may make sense of the random series
        \begin{equation*}
    S(\omega) = \sum_{k=1}^\infty \lambda_k(Z_k^2(\omega) - 1) + \mu_k Z_k(\omega)
    \end{equation*}
    in the pointwise (a.s.) sense, as it converges for $\bb P$ - a.s. $\omega$ by Kolmogorov's two-series theorem. Indeed:
    \begin{equation*}
    \E\sqbrace{\lambda_k(Z_k^2 - 1) + \mu_k Z_k} = 0
    \end{equation*}
    And:
    $$
    \mathrm{Var}(\lambda_k(Z_k^2 - 1) + \mu_kZ_k) = \E[(\l_kZ_k^2 + \mu_kZ_k)^2] - \lambda_k^2 = 2\lambda_k^2 + \mu_k^2
    $$
    which are both summable by our assumption. 
    \newline 
    \newline 
    Turning towards the estimation of $Q_S$, we apply Lemma 1.16 of \cite{Petrov}: for any real-valued random variable $X$ with characteristic function $\phi_X(t) := \bb E [\exp(itX)]$ we 
    $$
    Q_X(\e) \leq \paren{\frac{96}{95}}^2 \e \int_{-\frac{1}{\e}}^{\frac{1}{\e}}|\phi_X(t)| \d t \leq 2.2\e\int_{-\frac{1}{\e}}^{\frac{1}{\e}}|\phi_X(t)| \d t .
    $$
    In our case, we have with $X = S$ that:
    \begin{align}
    |\phi_{S}(t)| &= \left|\E\sqbrace{\exp\paren{it \sqbrace{\lim_{n \to \infty} \sum_{k=1}^n \lambda_{k}Z_k^2 + \mu_kZ_k}}}\right| \\ 
    &= \left|\E\sqbrace{\lim_{n \to \infty} \exp\paren{it \sum_{k=1}^n \lambda_{k}Z_k^2 + \mu_kZ_k}}\right| \label{eq:DCT}
    \\ 
    &= \lim_{n \to \infty}\left|\E\sqbrace{\exp\paren{it \sum_{k=1}^n \lambda_{k}Z_k^2 + \mu_kZ_k}}\right| \\ 
    &= \prod_{k=1}^{+\infty} |\phi_{\l_k(Z^2 -1) + \mu Z}(t)| \\ 
    &=  \frac{\exp\paren{-\sum_{k=1}^\infty\frac{2\mu_k^2t^2}{1+4\lambda_k^2t^2}}}{\prod_{k=1}^\infty(1+4\lambda_k^2t^2)^{1/4}}. \label{eq:Final CF}
    \end{align}
    Here, we used the dominated convergence theorem to interchange the limit and expectation, and the fact that the characteristic function of an independent sum is the product of those of the summands. Finally, in the last equality, we used that:
    $$
    |\phi_{\lambda (Z^2-1) + \mu Z}(t)| = \frac{\exp\paren{\frac{-2\mu^2t^2}{1+4\lambda^2t^2}}}{(1+4\lambda^2t^2)^{1/4}},
    $$
    as can be verified by a direct calculation.
    \newline 
    \newline 
    We thus must estimate:
    \begin{equation}
    \label{Eq:CF Integral}
    \int_{-\frac{1}{\e}}^{\frac{1}{\e}} |\phi_S(t)| \d t =\int_{-\frac{1}{\e}}^{\frac{1}{\e}}\frac{\exp\paren{-\sum_{k=1}^\infty\frac{2\mu_k^2t^2}{1+4\lambda_k^2t^2}}}{\prod_{k=1}^n(1+4\lambda_k^2t^2)^{1/4}} \d t = 2\int_{0}^{\frac{1}{\e}} \frac{\exp\paren{-\sum_{k=1}^\infty\frac{2\mu_k^2t^2}{1+4\lambda_k^2t^2}}}{\prod_{k=1}^n(1+4\lambda_k^2t^2)^{1/4}}\d t .
    \end{equation}
    \item{\textbf{Applying the Heuristics of Section (\ref{ProofTech})}}
    
To estimate \eqref{Eq:CF Integral}, we make precise the heuristics of small $t$, large $t$ which we discussed informally in Section \ref{ProofTech}. Set $t_0 := \frac{1}{2\L_1}$. Then for any $t \in [0, t_0]$ we have that:
    \begin{align*}
    |\phi_S(t)| = \exp\paren{-\sum_{k=1}^\infty\frac{2\mu_k^2t^2}{1 + 4\lambda_k^2t^2}} &\leq \exp\paren{-\sum_{k=1}^\infty\frac{2\mu_k^2t^2}{1 + 4\lambda_k^2t_0^2}} \\ 
    &= \exp\paren{-\L_1^2\sum_{k=1}^\infty \frac{2\mu_k^2t^2}{\L_1^2 + \lambda_k^2}} \\ 
    &\leq \exp\paren{- \L_1^2\sum_{k=1}^\infty \frac{2\mu_k^2 t^2}{2\L_1^2}} \\  
    &= \exp(-\norm{\mu}{2}^2t^2).\stepcounter{equation}\tag{\theequation}\label{Ineq: Gaussian Part Estimate}
\end{align*} 
 We  divide the interval of integration in \eqref{Eq:CF Integral} into two disjoint intervals, one being $\left[0, \frac{1}{2\L_1}\right]$ and the other being the interval $\left(\frac{1}{2\L_1}, \frac{1}{\e}\right]$. We will then apply the estimate (\ref{Ineq: Gaussian Part Estimate}) on the first interval and handle the second interval separately. 
 \newline 
 \newline 
 Before doing this, we note that it is possible for $\frac{1}{\e} \leq \frac{1}{2\L_1}$, so that partitioning the interval this way may not make sense. However, we always have the inequality:
\begin{equation}
\int_{0}^{\frac{1}{\e}} f(t) \d t \leq \int_{0}^{\frac{1}{2\L_1}} f(t)  \d t + \mathbbm{1}{\{\e \leq 2\L_1\}}\int_{\frac{1}{2\L_1}}^{1/\e } f(t) \d t.
    \end{equation}
    whenever $f$ is nonnegative and measurable. Thus, applying this to  \eqref{Eq:CF Integral} gives:
    \begin{equation}
        \int_{0}^{\frac{1}{\e}} |\phi_S(t)| \d t \leq \int_{0}^{\frac{1}{2\L_1}} |\phi_S(t)|  \d t + \mathbbm{1}{\{\e \leq 2\L_1\}}\int_{\frac{1}{2\L_1}}^{1/\e } |\phi_S(t)| \d t.
    \end{equation}
    Hereafter, we denote:
    $$
    \mathrm{I} = \int_{0}^{\frac{1}{2\L_1}}  |\phi_S(t)|\d t, \quad \mathrm{II} =  \int_{\frac{1}{2\L_1}}^{1/\e } |\phi_S(t)| \d t.
    $$
    Summarizing what we have done so far, we have shown:  
    \begin{equation}Q_S(\e) \leq 2.2 \e [2(\mathrm{I} + \mathbbm{1}{\{\e \leq 2\L_1\}}\mathrm{II})] = 4.4 \e(\mathrm{I} + \mathbbm{1}{\{\e \leq 2\L_1\}}\mathrm{II}). 
    \label{Eq: I, II AC Estimate}
    \end{equation}
    The remainder of the proof is the estimation of $\mathrm{I}$ and $\mathrm{II}$. 
    \item{\textbf{Estimation of $\mathrm{I}$ (The Gaussian Part):}}
    For $\mathrm{I}$, applying (\ref{Ineq: Gaussian Part Estimate}) and computing gives us:
    \begin{align*}
    \mathrm{I} 
    &\leq \int_{0}^{\frac{1}{2\L_1}}\exp\paren{-\norm{\mu}{2}^2t^2} \d t \\ 
    &= \int_{0}^{\frac{1}{2\L_1}} \exp\paren{-2\norm{\mu}{2}^2 \frac{t^2}{2}} \d t \\
    &= \frac{1}{2^{1/2}\norm{\mu}{2}}\int_0^{\frac{1}{\sqrt{2}} \frac{\norm{\mu}{2}}{\L_1}}  \exp\paren{-\frac{t^2}{2}} \d t \\
    &= \frac{\pi^{1/2}}{ \norm{\mu}{2}} \sqbrace{\Phi\paren{\frac{1}{2^{1/2}} \frac{\norm{\mu}{2}}{\L_1}} - \Phi(0)}.
    \end{align*}
     To simplify this further, we apply Corollary 2.2 of \cite{BobkovChistyakov} on the concentration functions of log-concave random variables. It states that for $X$ with log-concave density, we have:
     \begin{equation}
         Q_X( \e) \leq \frac{\e}{\sqrt{\mathrm{Var}(X) + \frac{\e
         ^2}{12}}}.
     \end{equation}
    In our case, setting $X = Z$, it gives:
    \begin{align*}
    \Phi\paren{\frac{1}{\sqrt{2}} \frac{\norm{\mu}{2}}{\L_1}} - \Phi(0) &= \bb P\paren{0 \leq Z \leq \frac{1}{\sqrt{2}} \frac{\norm{\mu}{2}}{\L_1}}  \\&\leq Q_Z\paren{\frac{1}{2^{1/2}} \frac{\norm{\mu}{2}}{\L_1}} \\ 
    &\leq  \frac{\frac{1}{2^{1/2}} \frac{\norm{\mu}{2}}{\L_1}}{\sqrt{1 + \frac{1}{24}\frac{\norm{\mu}{2}^2}{\L_1^2}}} \\ 
    &= \frac{\norm{\mu}{2}}{\paren{2\L_1^2 + \frac{\norm{\mu}{2^2}}{12}}^{1/2}}.
    \end{align*}
    The final bound we obtain for $\mathrm{I}$ is thus:
    \begin{equation}
    \frac{\pi^{1/2}}{ \norm{\mu}{2}} \sqbrace{\Phi\paren{\frac{1}{\sqrt{2}} \frac{\norm{\mu}{2}}{\L_1}} - \Phi(0)} \leq \frac{\pi^{1/2}}{ \norm{\mu}{2}} \frac{\norm{\mu}{2}}{\paren{2\L_1^2 + \frac{\norm{\mu}{2^2}}{12}}^{1/2}} = \paren{\frac{\pi}{2\L_1^2 + \frac{\norm{\mu}{2^2}}{12}}}^{1/2}.
    \label{Eq: Final Estimate of I}
    \end{equation}
    This completes the estimation of $\mathrm{I}$. 
    \item{\textbf{Estimation of II (The chi-square Part):}} 
    We only need to estimate $\mathrm{II}$ when $\e \leq 2\L_1$ as otherwise the indicator in front of it in \eqref{Eq: I, II AC Estimate} is equal to $0$. Thus, hereafter we assume $\e \leq 2\L_1$. 
    \newline
    \newline 
    Observe that the function \[ t \mapsto \frac{2\mu_k^2t^2}{1 + 4\lambda_k^2t^2}\] is increasing on $[0, + \infty)$ and hence the composition $t \mapsto \exp\paren{-\sum_{k=1}^\infty\frac{2\mu_k^2t^2}{1 + 4\lambda_k^2t^2}}$ is decreasing. For any $t$ with $\frac{1}{2\L_1} \leq t \leq \frac{1}{\e}$ we obtain:
    $$
    \exp\paren{-\sum_{k=1}^\infty\frac{\frac{2\mu_k^2}{4\L_1^2}}{1+ \frac{\lambda_k^2}{\L_1^2}}} = \exp\paren{-\sum_{k=1}^\infty\frac{\frac{1}{2}\mu_k^2}{\L_1^2+ \lambda_k^2}} \leq \exp\paren{-\frac{1}{4}\frac{\norm{\mu}{2}^2}{\L_1^2}}.
    $$
    Hence for $\frac{1}{2\L_1} \leq t \leq \frac{1}{\e}$:
    $$
    |\phi_S(t)| \leq \exp\paren{-\frac{1}{4}\frac{\norm{\mu}{2}^2}{\L_1^2}}\prod_{k=1}^\infty \frac{1}{(1 + 4 \l_k^2 t^2)^{1/4}}.
    $$
    So that:
    $$
    \mathrm{II} \leq \exp\paren{-\frac{1}{4}\frac{\norm{\mu}{2}^2}{\L_1^2}}    \int_{\frac{1}{2\L_1}}^\frac{1}{\e} \prod_{k=1}^n \frac{1}{(1 + 4 \l_k^2 t^2)^{1/4}}\d t .
    $$
    Applying Lemma \ref{Lem:LambdaIntEst} to the integral on the right hand side gives
    \begin{equation}   \mathrm{II} \leq 6\exp\paren{-\frac{1}{4}\frac{\norm{\mu}{2}^2}{\L_1^2}}   \mathcal{I}(\e, \lambda) \label{Eq: Final Estimate of II};
    \end{equation}
    which completes the estimation of $\mathrm{II}$.
    \item{\textbf{Putting Everything Together:}}
     Plugging in the bounds \eqref{Eq: Final Estimate of I} and \eqref{Eq: Final Estimate of II} respectively for $\mathrm{I}, \mathrm{II}$ into \eqref{Eq: I, II AC Estimate} and replacing different universal constants by their maxima gives 
    \begin{align*}
        Q_S(\e) &\leq 4.4 \e \sqbrace{\paren{\frac{\pi}{2\L_1^2 + \frac{\norm{\mu}{2^2}}{12}}}^{1/2} +  6\mathbbm{1}{\{\e \leq 2 \L_1\}} \exp\paren{-\frac{1}{4}\frac{\norm{\mu}{2}^2}{\L_1^2}}{\mathcal{I}(\e, \l)} } \\ 
        &\leq 4.4\e \sqbrace{\frac{7}{(24 \L_1^2 + \norm{\mu}{2}^2)^{1/2} } + 6\mathbbm{1}{\{\e \leq 2 \L_1\}} \exp\paren{-\frac{1}{4}\frac{\norm{\mu}{2}^2}{\L_1^2}}{6\mathcal{I}(\e, \l)} } \\ 
        &\leq 35 \e \sqbrace{\frac{1}{(\L_1^2 + \norm{\mu}{2}^2)^{1/2}} + \mathbbm{1}{\{\e \leq 2 \L_1\}}\exp\paren{-\frac{1}{4}\frac{\norm{\mu}{2}^2}{\L_1^2}} \mathcal{I}(\e, \l)}.
    \end{align*}
    which is the desired bound of Theorem \ref{Thm:Full Bound}.
    \end{enumerate}
    \end{proof}
    \section{Conclusion and Future Work} 
    \label{Sec: Conclusion}
    In this note, we obtained estimates for the concentration function of a Gaussian quadratic form, improving upon and generalizing the existing estimates in the literature. We also provided applications of our bounds to the limiting distributions of certain U-statistics. Continuing in this direction, several natural avenues for future work arise, which we describe below:
    \begin{enumerate}
        \item{\textbf{Simplified bounds and analysis}}:  
        Handling the chi-square portion of our bound seems to involve a large amount of casework, and it would be interesting to see if a more unified approach exists to obtain such estimates. In addition, our bounds do not recover optimal rates in a special case where $\l_k \geq 0$ for all $k\ge1$. It would be interesting to see if this can be taken into account without any cumbersome casework.
        \item \textbf{Berry-Esseen bounds for second order U-statistics}: Current proofs of Berry-Esseen Bounds for U-statistics use simpler, but ultimately, suboptimal bounds for the concentration function of quadratic forms. For example, \cite{huanggandy} proves an estimate for the Kolmogorov metric using the celebrated Carbery-Wright inequality. Although powerful and general, it suffers from the drawback that it always provides a rate of $\e^{1/2}$ for quadratic forms. \cite{yanus} provides rates for degenerate U-statistics by estimating the concentration function of the sum with simply that of the largest term. Thus, to our knowledge, no work has used a concentration function estimate with a rate other than $\e^{1/2}$, and we plan to explore how our new estimate can provide improved convergence rates in the Kolmogorov metric. 
        \item {\textbf{Extension to higher order U-statistics}}: Our current estimates are for Gaussian quadratic forms, and these are precisely the limiting distributions for second order U-statistics. In general, for an $m$-th order $U$-statistic, the appropriate limit distribution is a Gaussian polynomial of order $m$. Such random variables do not admit closed-form expressions for the characteristic function; making it difficult for us to derive such estimates directly. However, in this regard, the technique of \textit{symmetrization}, introduced in \cite{gotze1979} for quadratic forms in independent random variables and later extended by (for example) \cite{yurinskii1979} for higher order polynomials, can be helpful in estimating such quantities. In particular, when estimating the characteristic function, such results allow us to replace a nonlinear expression in the Gaussian random variables by a linear one, after appropriately symmetrizing. Thus, a further line of work could consist of proving analogous estimates for the concentration function to what we have shown here using Fourier-Analytic techniques, and subsequently using these to obtain sharper Berry-Esseen bounds for higher order $U$-statistics.
        \item{\textbf{A Complete Set of Lower Bounds}}: All of our estimates are currently upper bounds, and in certain cases, we obtain matching lower bounds. For example, in the case ${1}/{p_1} \leq {1}/{3}$, we obtain rates that match those of Theorem $2$ of \cite{bobkov2020two}, which are in turn two sided bounds. Our computations in section \ref{sec:concentration function lower bound} also show that in the special case of $\l_1= 1, \l_2 = -1$ and $\mu = 0$, we obtain a matching lower bound for $\e$ taken sufficiently small. To obtain lower bounds in all regimes of $\mu, \l$ would be an interesting line of future work.
    \end{enumerate}
        \bibliography{references}
\bibliographystyle{apalike}
\newpage
\setcounter{section}{0}
\setcounter{equation}{0}
\setcounter{figure}{0}
\renewcommand{\thesection}{S.\arabic{section}}
\renewcommand{\theequation}{E.\arabic{equation}}
\renewcommand{\thefigure}{A.\arabic{figure}}

\begin{center}
  \Large {\bf Appendix to ``On the L{\'e}vy concentration function of Gaussian quadratic forms with applications to second order U-statistics"}
  \end{center}
\section{The Kolmogorov-Rogozin Inequality} 
\label{Sec: KR Ineq}
Our initial approach for obtaining such estimates was to use the Kolmogorov-Rogozin inequality. It bounds the concentration function of a sum of independent random variables in terms of the concentration function of the summands. A version given in \cite{EsseenKolmRog} states that for $\{X_k\}_{k\in \mathbb{N}}$ independent and $S = \sum_{k=1}^{+\infty} X_k$ (the series interpreted $\mathbb{P}$ a.s.), we have that:
    \begin{align*}
    \label{Eq: Kolmogorov Rogozin}
    Q_{S}(\e) \lesssim \e \paren{\sum_{k=1}^{\infty} \inf_{0\leq \e_k \leq \e}\e_k^2D(\widetilde{X_k},\e_k)}  &\lesssim \frac{\e}{\sqrt{\sum_{k=1}^{\infty} \sup_{0\leq \e_k \leq \e} \e_k^2(1-Q_{\widetilde{X_k}}(\e))}} \\ 
    &\leq \frac{\e}{\sqrt{\sum_{k=1}^{\infty} \sup_{0\leq \e_k \leq \e} \e_k^2(1-Q_{{{X_k}}}(\e))}};
    \end{align*}
    where here $D$ denotes the following truncated second moment: 
    $$
    D(X,c) = \E\sqbrace{|X|^2\mathbbm{1}{\{|X|\leq c\}}} + c^2\bb P(|X| > c),
    $$
    and $\widetilde{X}$ is a symmetrized version of $X$, e.g. $\widetilde{X} = X - X'$ where $X'$ is an independent copy of $X$. In general, the easiest estimates to apply are the two rightmost, which require only knowledge of $Q_{\widetilde{X_k}}$ or $Q_{X_k}$ for individual random variables $X_k$. We always have that $Q_{\widetilde{X}} \leq Q_{X}$, which gives us the second inequality above. However, sometimes $Q_{\widetilde{X}}$ may be difficult to estimate compared to $Q_X$ and so the second inequality may be used. Prior to applying the inequality, let us also note that if we have an upper bound for the concentration function, e.g.:
    $
    Q_{X_k}(\e) \leq M_k(\e)
    $
    then the inequality holds with $M_k$ in place of $Q_{X_k}$, that is:
    $$
    Q_{S}(\e) \leq \frac{\e}{\sqrt{\sum_{k=1}^{\infty} \sup_{0\leq \e_k \leq \e} \e_k^2(1-M_{k}(\e))}};
    $$
    which is how the inequality would be applied in practice, as the exact form $Q_{X_K}(\e)$ is almost never known but can usually be bounded from above without difficulty. 
    \newline 
    \newline 
    To demonstrate the deficiency of the inequality for our applications, we show it yields poor bounds in the case of  sums of central chi-square random variables, i.e. we consider:
    $$
    S = \sum_{k=1}^{\infty} \lambda_k(Z_k^2 - 1).
    $$
    Although matching two sided bounds have already been proven in this case by \cite{bobkov2020two}, it is instructive and straightforward to apply the inequality in this case as $Q_{X_k}(\e)$ has a simple upper bound. Indeed,  $X_k = \l_k(Z_k^2 - 1)$, and by translation invariance, $Q_{X_k}(\e) = Q_{\l_kZ_k^2}(\e) = Q_{\Z_k^2}\paren{\frac{\e}{|\l_k|}} $. For any $x \in \mathbb{R}$ we have:
    \begin{align*}
    \bb{P}\paren{x \leq Z_k^2 < x + \frac{\e}{|\l_k
    |
    }} &= \int_{x}^{x + \frac{\e}{|\l_k|}} \frac{1}{z^{1/2}}\exp(-z/2) \mathbf{1}\cbrace{x \geq 0} \d z  \\ 
    &\leq \int_0^{\frac{\e}{|\l_k|}} \frac{1}{z^{1/2}}\exp(-z/2) \d z \\ 
    &\leq 2 \paren{\frac{\e^{1/2}}{|\l_k|^{1/2}}}.
    \end{align*}
    Hence, we have the estimate:
    $$
    Q_{X_k}(\e) \leq \min \cbrace{1, 2 \frac{\e^{1/2}}{|\l_k|^{1/2}}}.
    $$
     Thus, for each $k$ we must compute:
    $$
    \sup_{0 \leq \e_k \leq \e} \e_k^2\paren{1 - \min \cbrace{1,2\frac{\e_k^{1/2}}{|\l_k|^{1/2}}}}.
    $$
    For the bound to not trivialize (e.g. for the supremum to be nonzero), we would need for all $k$ that:
    $$
    2\frac{\e_k^{1/2}}{|\l_k|^{1/2}} < 1 \implies \e_k < \frac{|\l_k|}{4}.
    $$
    Note that we also have the constraint $\e_k \leq \e$.
    Thus, we must compute:
    $$ 
    \sup_{0 \leq \e_k \leq \min \cbrace{\e, \frac{|\l_k|}{4}}} \e_k^2\left(1 - 2\frac{\e_k^{1/2}}{|\l_k|^{1/2}}\right).
    $$
    We consider the function $x^2\paren{1 - 2\frac{x^{1/2}}{|\l_k|^{1/2}}}$. The function is $0$ for $x>0$ and positive for $x$ small, so any maxima be identified by computing critical points. We differentiate and find the critical point:
    $$
    x^* = \frac{4}{25}|\l_k|.
    $$
    Note that $\frac{4}{25}|\l_k| < \frac{|\l_k|}{4}$, so that all that remains to be seen is whether $\frac{4}{25}|\l_k| < \e$. Assuming that $ \e > \frac{4}{25} |\l_k|$, we obtain the value:
    $$
    \paren{\frac{4}{25}|\l_k|}^2\frac{1}{5} = \frac{16}{3125}|\l_k|^2.
    $$
    On the other hand, if $\e$ is smaller than $\frac{4}{25}|\l_k|$ this value, then the maximum is achieved at $\e$ as this is an increasing function to the left of the critical point. Thus, we obtain:
    $$
    \sup_{0 \leq \e_k \leq \min \cbrace{\e, \frac{|\l_k|}{4}}} \e_k^2\left(1 - 2\frac{\e_k^{1/2}}{|\l_k|^{1/2}}\right) = \begin{cases}
         \frac{16}{3125}|\l_k^2| & \e >\frac{4}{25}|\l_k|, \\ 
         \e^2\paren{1 -2\frac{\e^{1/2}}{|\l_k|^{1/2}} } & \e \leq \frac{4}{25}|\l_k|.
    \end{cases}
    $$
    The sequence $|\l_k| \to 0$ and is monotone, so there exists some $K_\e$ for which $\e \leq \frac{4}{25}|\l_k|$ for $k \leq K_\e$ and $\e > \frac{4}{25}|\l_k|$ for $k > K_\e$. Hence, we obtain a bound of the form:
    $$
    \frac{\e}{\sqrt{\sum_{k=1}^{K_\e}  \e^2\paren{1 -2\frac{\e^{1/2}}{|\l_k|^{1/2}} } + \frac{16}{3125}\L_{K_\e + 1}^2}}.
    $$
    To see the issues with this bound, we consider the case when presented a sum with finitely many terms, say $N$ terms so that $\l_{k} = 0$ for all $k > N+1$ and $\l_k \neq 0$ for $1 \leq k \leq N$. Then, for any $\e \leq \frac{4}{25}|\l_N|$ we obtain the lower bound:
    $$
    \frac{\e}{\sqrt{\e^2\sum_{k=1}^N\paren{1 - 2 \frac{\e^{1/2}}{|\l_k|^{1/2}}}}} \geq \frac{1}{\sqrt{N}}.
    $$
    This is simply because $1 - 2\frac{e^{1/2}}{|\l_k|^{1/2}} \leq 1$ for this value of $\e$. Thus, in these cases, the Kolmogorov-Rogozin inequality provides the correct rate in $N$, the number of summands, but does not provide the correct rate in terms of $\e$ as $\e \to 0^+$ for fixed $N$. Indeed, all terms in the sum are absolutely continuous random variables and so as $\e \to 0^+ $ we should also have $Q_{S}(\e) \to 0$. However, the Kolmogorov Rogozin inequality gives us a bound which is constant order as $\e \to 0^+$. 
\section{The Carbery-Wright Inequality}
\label{Sec: CR Ineq}
Theorem 8 of \cite{CarberyWright} is a concentration function estimate for polynomials of a log-concave random variable. Specializing the result in our case to Gaussian quadratic forms, e.g. as is done in \cite{huanggandy}, it states that 
\begin{equation}
    Q_{S}(\e) \leq  \frac{C \e^{1/2}}{\mathrm{Var}(S)^{1/4}}
\end{equation}
for some universal constant $C > 0$. 
The Carbery-Wright inequality is powerful due to its impressive generality and ease of use -- it provides a bound for polynomials of \textit{any} degree and with \textit{any} log-concave random variable as input. This generality can also lead to suboptimal bounds in simple cases. In our setting, we have that 
\[
\mbox{Var}(S) = \sum_{k=1}^{\infty} \left[\lambda_k^2\mathbb{E}[(Z_k^2 - 1)^2] + \mu_k^2\mathbb{E}[Z_k^2]\right] = \sum_{k=1}^{\infty} (\mu_k^2 + 3\lambda_k^2) = \|\mu\|_2^2 + 3\|\lambda\|_2^2.
\]
Hence, recalling the notation of~\eqref{eq:Gaussian-and-chi-square-anti-concentration} this gives a bound of the form $Q_S(\e) \le CQ_G^{1/2}(\e)$. In particular, such a bound always gives a rate of $\e^{1/2}$ when it is possible in many cases to obtain the faster rate of $\e$. 

    \section{Density and Concentration Function of a Symmetrized chi-square Random Variable}
    \label{Pf: Symmetrized Concentration}
    Here, we compute the density of $Z_1^2 - Z_2^2$ where $Z_1, Z_2$ are independent, $\Normal(0,1)$ random variables. Although this is a straightforward computation, we provide it here for future reference as we could not find it elsewhere. We also provide a lower bound for the concentration function, showing that it is of order $\e \log (1/\e)$ for $\e$ sufficiently small. This provides evidence for the optimality of our bounds in Theorems \ref{Thm:Full Bound} and \ref{Thm:User Friendly Bound} in the $p_1 = 2$ case. 
    \subsection{Density Computation}
    We have:
    $$
    f_{Z^2} = \frac{1}{(2\pi)^{1/2}}\frac{1}{x^{1/2}}\exp(-x/2)\mathbf{1}_{x > 0}.
    $$
    Thus:
    \begin{align*}
    f_{Z_1^2 -Z_2^2}(x) &= \int_{\mathbb{R}}f_{Z^2}(x-z)f_{Z^2}(-z) dz  \\ 
    &= \frac{1}{2 \pi} \int_{-\infty}^{\min(0,x)} \frac{1}{(x-z)^{1/2}(-z)^{1/2}} \exp(-(x-z)/2)\exp(z/2) \d z \\ 
    &= \frac{\exp(-x/2)}{2 \pi} \int_{-\infty}^{\min(0,x)} \frac{1}{(x-z)^{1/2}(-z)^{1/2}} \exp(z)  \d z .
    \end{align*}
    As this density must be symmetric, without loss of generality we take $x > 0$ so that the $\min(0,x) = 0$. We change variables $z = xu$ and then $v = -u$ to  obtain:
    \begin{align*}
        \frac{\exp(-x/2)}{2 \pi} \int_{-\infty}^{0} \frac{1}{(x-z)^{1/2}(-z)^{1/2}} \exp(z)  \d z  
        &= \frac{\exp(-x/2)}{2 \pi} \int_{-\infty}^{0} \frac{1}{(1-u)^{1/2}u^{1/2}} \exp(xu)  \d u \\ 
        &= \frac{\exp(-x/2)}{2 \pi} \int_{0}^{+\infty} \frac{1}{(1+v)^{1/2}v^{1/2}} \exp(-xv)  \d v.
    \end{align*}
    This integral is finite for all non-zero $x$. However, as $x\to 0^+$, the monotone convergence theorem gives:
    $$
    \lim_{x \to 0^+} f_{Z_1^2 -Z_2^2}(x) = \frac{1}{2\pi}\int_{0}^{+\infty} \frac{1}{(1+u)^{1/2}u^{1/2}} \d u  = + \infty.
    $$
    Showing that the density is unbounded and that the concentration function cannot behave linearly as $\e \to 0^+$.
    \subsection{Concentration Function Lower Bound}
    \label{sec:concentration function lower bound}
    With this formula in hand, we provide a lower bound for the concentration function by lower bounding $\bb{P}(0 < Z_1^2 - Z_2^2 \leq \e)$. More concretely, we will show that for $\e < \exp\paren{-\frac{\exp(1)}{2}}$:
    $$
    \bb{P}(0 < Z_1^2 - Z_2^2 \leq \e) \geq \frac{1}{4\pi} \e \log \paren{\frac{1}{\e}}.
    $$
    For $\e > 0$, we compute:
    \begin{align*}
        (2\pi) \bb{P}(0 < Z_1^2 - Z_2^2 \leq \e) &= \int_0^\e\sqbrace{\exp\paren{-x/2}\int_{0}^{+\infty} \frac{\exp(-xv)}{(1+v)^{1/2}v^{1/2}}\d v} \d x \\ 
        &= \int_0^{+\infty}\sqbrace{\frac{1}{(1+v)^{1/2}v^{1/2}}\int_{0}^{\e} \exp\paren{-x/2 - xv}\d x }\d v
        \\&=\int_0^{+\infty}\frac{1}{(v^2+v)^{1/2}}\frac{1 - \exp\paren{\paren{-v + \frac{1}{2}}\e}}{v + \frac{1}{2}}\d v.
    \end{align*}
    with the interchange of integrals being justified by Tonelli's theorem. Completing the square in the denominator gives that the above is:
       \begin{align*}
        \int_0^{+\infty}\frac{1}{(v^2+v)}\frac{1 - \exp\paren{\paren{-v + \frac{1}{2}}\e}}{v + \frac{1}{2}} \d v &= \int_0^{+\infty}\frac{1}{\sqbrace{\paren{v + \frac{1}{2}}^2 - \frac{1}{4}}^{1/2}}\frac{1 - \exp\paren{\paren{-v + \frac{1}{2}}\e}}{v + \frac{1}{2}} \d v \\ 
        &= \int_{1/2}^{+\infty}\frac{1 - \exp\paren{-u\e}}{u\sqbrace{u^2 - \frac{1}{4}}^{1/2}} \d u \\
        &= \int_{\frac{\e}{2}}^{+\infty} \frac{1-\exp(-z)}{z\sqbrace{\frac{z^2}{\e^2} - \frac{1}{4}}^{1/2}} \d z.
    \end{align*}
    By Taylor's theorem, for $z$ in the interval $[0,c]$, we have the estimate:
    $$
    \exp(-z) \leq 1 - z + \exp(c) \frac{z^2}{2}.
    $$
    Take $c$ such that $\frac{c}{\e} > 1/2$ e.g. $c > \e/2$.
    We may truncate the integral to obtain:
    \begin{align*}
        \int_{\frac{\e}{2}}^{+\infty} \frac{1-\exp(-z)}{z\sqbrace{\frac{z^2}{\e^2} - \frac{1}{4}}^{1/2}} \d z \geq \int_{\frac{\e}{2}}^{c} \frac{z - \exp(c) 
        \frac{z^2}{2}}{z\sqbrace{\frac{z^2}{\e^2} - \frac{1}{4}}^{1/2}} \d z = \int_{\frac{\e}{2}}^{c} \frac{1}{\sqbrace{\frac{z^2}{\e^2} - \frac{1}{4}}^{1/2}} \d z  - \exp(c)\int_{\frac{\e}{2}}^{c} \frac{
        \frac{z^2}{2}}{z\sqbrace{\frac{z^2}{\e^2} - \frac{1}{4}}^{1/2}} \d z.
    \end{align*}
    For the first term:
    $$
    \int_{\frac{\e}{2}}^{c} \frac{1}{\sqbrace{\frac{z^2}{\e^2} - \frac{1}{4}}^{1/2}} \d z = \e \int_{1/2}^{c/\e} \frac{1}{\paren{u^2 - \frac{1}{4}}^{1/2}} \d u = \e\log\paren{2\sqbrace{\frac{c}{\e} + \paren{\frac{c^2}{\e^2} - \frac{1}{4}}^{1/2}}}. 
    $$
    For the second term, we compute:
    \begin{align*}
    \int_{\frac{\e}{2}}^{c} \frac{
        \frac{z^2}{2}}{z\sqbrace{\frac{z^2}{\e^2} - \frac{1}{4}}^{1/2}} \d z &= \int_{\frac{\e}{2}}^{c} \frac{z}{2\sqbrace{\frac{z^2}{\e^2} - \frac{1}{4}}^{1/2}} \d z \\&= \e^2 \int_{1/2}^{c/\e} \frac{u}{2\paren{u^{2} - \frac{1}{4}}^{1/2}} \d u
        \\
        &=\e^2 \int_{1/4}^{c^2/\e^2} \frac{1}{4\paren{z - \frac{1}{4}}^{1/2}} \d u 
        \\
        &= \frac{\e^2}{4} \sqbrace{\paren{\frac{c^2}{\e^2} - \frac{1}{4}}^{1/2}}.
    \end{align*}
    Adding the results, we obtain the estimate:
    $$
    \e\log\paren{2\sqbrace{\frac{c}{\e} + \paren{\frac{c^2}{\e^2} - \frac{1}{4}}^{1/2}}}  - \exp(c) \frac{\e^2}{4} \sqbrace{\paren{\frac{c^2}{\e^2} - \frac{1}{4}}^{1/2}}.
    $$
    To simplify this estimate a bit, we have:
    $$
    \e\log\paren{2\sqbrace{\frac{c}{\e} + \paren{\frac{c^2}{\e^2} - \frac{1}{4}}^{1/2}}} \geq \e \log\paren{\frac{c}{\e}},
    $$
    and:
    $$
    \frac{\e^2}{4} \sqbrace{\paren{\frac{c^2}{\e^2} - \frac{1}{4}}^{1/2}} \leq c\frac{\e}{4}.
    $$
    Assume without loss of generality that $\e < 1$. Then, we may take $c = 1$ (for example) and obtain the estimate:
    \begin{align*}
        \e\log\paren{2\sqbrace{\frac{c}{\e} + \paren{\frac{c^2}{\e^2} - \frac{1}{4}}^{1/2}}}  - \exp(c) \frac{\e^2}{4} \sqbrace{\paren{\frac{c^2}{\e^2} - \frac{1}{4}}^{1/2}}
 &\geq \e \log\paren{\frac{1}{\e}} - \e\frac{\exp(1)}{4} \\ 
 &= \e \sqbrace{\log\paren{\frac{1}{\e}} - \frac{\exp(1)}{4}}.
    \end{align*}
    Hereafter, we restrict to $\e$ such that:
    $$
    \e < \exp\paren{-\frac{\exp(1)}{2}}.
    $$
    For such $\e$, we have:
    $$
    \log\paren{\frac{1}{\e}} - \frac{\exp(1)}{4} > \frac{1}{2}\log\paren{`\frac{1}{\e}}.
    $$
    Hence:  
    $$
    \e \sqbrace{\log\paren{\frac{1}{\e}} - \frac{\exp(1)}{4}} \geq \frac{1}{2}\sqbrace{\e \log \paren{\frac{1}{\e}}}.
    $$
    Dividing both sides by the normalizing constant $2\pi$ provides the desired estimate. 
\section{Recovering the Bounds of Theorem 2 of \cite{bobkov2020two}}
\label{Sec: Recovering Bobkov}
In this section, we show that our bounds imply the concentration function estimate obtained from Theorem 2 of \cite{bobkov2020two}, up to universal constant factors. As we recover their estimate under weaker assumptions, we can consider our result a strengthening of theirs, modulo worse constants. 
\begin{proof}
    Strictly speaking, to apply Theorem 2 of \cite{bobkov2020two} exactly as stated, we must assume that we have $\l_1^2 \leq \frac{\L_1^2}{3}$, i.e. $\frac{1}{p_1} \leq \frac{1}{3}$ or $p_1 \geq 3$ and that $\l \geq 0$. Furthermore, their result applies only to finite sums. However, we emphasize that these assumptions are not necessary to apply our result.
    \newline 
    \newline 
    To apply their estimate to our case, we use their notation and set $a_k = -\frac{1}{2}\frac{\mu_k}{\l_k}$. If we define:
    \begin{equation}
    W_a := \sum_{k=1}^{n}\l_k(Z_k - a_k)^2,
    \end{equation}
    then we have:
    \begin{align*}
    W_a &= \sum_{k=1}^{n} \l_k Z_k^2 +\mu_kZ_k + \frac{1}{4}\frac{\mu_k^2}{\l_k^2} \\ 
    &= \sum_{k=1}^{n}\l_k(Z_k^2 -1) + \mu_kZ_k + \sum_{k=1}^{n}\l_k + \frac{1}{4} \frac{\mu_k^2}{\l_k^2} \\ 
    &= S +\sum_{k=1}^{n}\l_k + \frac{1}{4} \frac{\mu_k^2}{\l_k^2}. 
    \end{align*}
    Theorem $2$ of \cite{bobkov2020two} states that:
    \begin{equation}
    \norm{f_{W_a}}{L^\infty(\bb{R})} \leq \frac{2}{\paren{\L_1^2 +\sum_{k=1}^n \l_k^2 a_k^2}^{1/2}}.
    \end{equation}
    Because $W_a = S + C$ where $C$ is a constant, necessarily $\norm{f_Y}{L^\infty(\bb{R})} = \norm{f_{W_a}}{L^\infty(\bb{R})}$. Applying this fact gives an estimate
    $$
    Q_{S}(\e) \leq \frac{\e}{\paren{\L_1^2 +\sum_{k=1}^n \l_k^2 a_k^2}^{1/2}}.
    $$
    Substituting for $a_k$ gives:
    \begin{equation}
    \label{Eqn: Bobkov Simplified}
        Q_{S}(\e) \leq \frac{\e}{\paren{\L_1^2 + \frac{1}{4}\sum_{k=1}^n\mu_k^2 }^{1/2}} = \frac{2\e}{\paren{4 \L_1^2 + \norm{\mu}{2}^2}^{1/2}} := B(\e).
    \end{equation}
    Let $A(\e)$ denote our bound from Theorem (\ref{Thm:User Friendly Bound}). To show that our bound recovers theirs, we must show that there exists a universal constant $m > 0$ for which:
    $$
    B(\e) \geq m A(\e).
    $$
    First, we observe:
    $$
    B(\e) \geq \frac{2\e}{\paren{4\L_1^2 + 4\norm{\mu}{2}^2}^{1/2}} = \frac{\e}{\paren{\L_1^2 + \norm{\mu}{2}^2}^{1/2}}.
    $$
    Recalling Theorem \ref{Thm:User Friendly Bound}, our bound is of the form:
    \begin{equation}
        \label{Eqn: Our Bound}
        35 \e \sqbrace{ \paren{\frac{1}{\L_1^2 + \norm{\mu}{2}^2}}^{1/2}  + \mathbbm{1}{\{\e \leq 2 \L_1\}}\exp\paren{-\frac{1}{4}\frac{\norm{\mu}{2}^2}{\L_1^2}}\mathcal{J}(\e, \lambda)}.
    \end{equation}
    The first term in square brackets is exactly in the form of the bound given in \eqref{Eqn: Bobkov Simplified}. For the second term, $\exp\paren{-\frac{1}{4}\frac{\norm{\mu}{2}^2}{\L_1^2}} \mathcal{J}(\e,\l)$, we have $\frac{1}{p_1}\leq \frac{1}{3}$, which means that we may apply the bound corresponding to $p_1 > 2$ and obtain: 
    \begin{equation}
     \mathcal{J}(\e, \l) \leq \frac{1}{\frac{1}{2} - \frac{1}{p_1}}\frac{1}{\L_1} \leq \frac{6}{\L_1} .
    \end{equation}
    Thus, the second term is bounded by:
    \begin{equation}
    \label{Eqn: Simplified J Exp Bound}
    6 \cdot \frac{\exp\paren{-\frac{1}{4}\frac{\norm{\mu}{2}^2}{\L_1^2}}}{\L_1}.
    \end{equation}
    To complete the proof, we will show that there exists a universal constant $C> 0$ for which:
    $$
    \frac{\exp\paren{-\frac{1}{4}\frac{\norm{\mu}{2}^2}{\L_1^2}}}{\L_1} \leq \frac{C}{\paren{\L_1^2 + \norm{\mu}{2}^2}^{1/2}}.
    $$
    By nonnegativity of our expressions, we have the equivalence:
    \begin{align*}
         \frac{\exp\paren{-\frac{1}{4}\frac{\norm{\mu}{2}^2}{\L_1^2}}}{\L_1} \leq \frac{C}{\sqrt{\L_1^2 + \norm{\mu}{2}^2}} &\iff \paren{\L_1^2 + \norm{\mu}{2}^2}\frac{\exp\paren{-\frac{1}{2}\frac{\norm{\mu}{2}^2}{\L_1^2}}}{\L_1^2} \leq C^2 .
         \end{align*}
         Simplifying gives that $C$ must satisfy:
         \begin{align*}
         \exp\paren{-\frac{1}{2}\frac{\norm{\mu}{2}^2}{\L_1^2}} + \frac{\norm{\mu}{2}^2}{\L_1^2} \exp\paren{-\frac{1}{2}\frac{\norm{\mu}{2}^2}{\L_1^2}} \leq C^2.
    \end{align*}
    The first term in the sum on the left hand side is bounded by $1$, being the exponential of a negative number. For the second term, the function $x \mapsto x\exp(-x/2)$ achieves its maximum value $2 \exp(-1) \leq 1$ at $x = 2$. Thus, the above bound holds for $C^2 = 1 + 2\exp(-1) < 2$. Substituting this into \eqref{Eqn: Simplified J Exp Bound} gives:
    \begin{equation}
        \exp\paren{-\frac{1}{4}\frac{\norm{\mu}{2}^2}{\L_1^2}}\mathcal{J}(\e, \l) \leq \frac{6 \cdot 2^{1/2}}{\sqrt{\L_1^2 + \norm{\mu}{2}^2}}.
    \end{equation}
    Totaling everything in \eqref{Eqn: Our Bound} gives:
    \begin{equation}
        A(\e) \leq  \frac{(35 \cdot 6\cdot 2^{1/2})\e}{\paren{\L_1^2 + \norm{\mu}{2}^2}^{1/2}} \leq (35 \cdot 6\cdot 2^{1/2})B(\e).
    \end{equation}
    Thus, we obtain the desired estimate with $m = \frac{1}{(35 \cdot 6 \cdot 2^{1/2})}$.
\end{proof}
    \section{Proof of Lemma 1} \label{prf:LambdaIntEst}
        \begin{proof}
    We wish to estimate the integral:
    \begin{equation}
    \label{eq:initial integral}
    \int_{\frac{1}{2\L_1}}^\frac{1}{\e} \prod_{k=1}^\infty\frac{1}{(1 + 4 \l_k^2 t^2)^{1/4}}\d t .
    \end{equation}
    As defined in Section \ref{sec: Notation}, we set:
    $$
    \frac{1}{p_k} := \frac{\lambda_k^2}{\L_1^2}, \quad k \geq 1.
    $$
    Each $1/p_k$ describes the total contribution of the $\l_k$ term to the total variance of the sum. We have the following easy to verify properties:
    \begin{enumerate}
    \item $\sum_{k} \frac{1}{p_k} = 1,$
    \item $\frac{1}{p_k} \geq \frac{1}{p_{k+1}}$ or $p_{k} \leq p_{k+1}$ (by the fact that $|\l_k|$ are decreasing),
    \item $p_k \geq k$ for all $k$.
    \end{enumerate}
H\"older's inequality with exponents $p_k$ gives us that:
    \[
\int_{\frac{1}{2\L_1}}^\frac{1}{\e} \prod_{k=1}^\infty \frac{1}{(1 + 4 \l_k^2 t^2)^{1/4}}\d t
    \leq \prod_{k=1}^{\infty}     \paren{\int_{\frac{1}{2\L_1}}^\frac{1}{\e} \frac{1}{(1 + 4 \l_k^2 t^2)^{p_k/4}}\d t}^{1/p_k}.
    \]
    For the integral inside parentheses, we make the substitution:
\begin{equation} \label{eq: int substitution}
        u = 1 + 4 \lambda_k^2t^2, \implies t = \frac{(u - 1)^{1/2}}{2|\lambda_k|}, \d t = \frac{1}{4|\lambda_k|(u-1)^{1/2}} \d u.
        \end{equation}
        Hence, the integral becomes:
        \begin{equation}
        \label{Eq: I_k Integral}
        \int_{\frac{1}{2\L_1}}^\frac{1}{\e} \frac{1}{(1 + 4 \l_k^2 t^2)^{p_k/4}} \d t = \frac{1}{4|\lambda_k|}\int_{1 + \frac{\lambda_k^2}{\L_1^2}}^{1 + \frac{4\lambda_k^2}{\e ^2}} u^{-p_k/4}(u-1)^{-1/2}\d u = \frac{1}{4|\lambda_k|}\int_{1 + \frac{1}{p_k}}^{1 + \frac{4\lambda_k^2}{\e ^2}} u^{-p_k/4}(u-1)^{-1/2}\d u.
        \end{equation}
        We next search for an estimate of the kind:
        \begin{equation}\label{eq: poly sub}
        (u-1)^{-1/2} \leq C u^{-1/2}, \quad u \in \sqbrace{1 + \frac{1}{p_k}, 1 + \frac{4\lambda_k^2}{\e ^2}},
        \end{equation}
        so that \eqref{Eq: I_k Integral} can be bounded by the integral of $u^{-p_k/4 -1/2}$ times a constant. We observe that the function
        $
        \frac{u}{u-1}
        $
        is decreasing for $u \geq 1$. Hence, on our interval of integration, we must have:
        $$
                \frac{u}{u-1} \leq \frac{1 + \frac{1}{p_k}}{\frac{1}{p_k}} \implies (u-1)^{-1/2}\leq \paren{\frac{1 + \frac{1}{p_k}}{\frac{1}{p_k}} }^{1/2} u^{-1/2} = (p_k+1)^{1/2}u^{-1/2}.
        $$
        Hence:
        \begin{equation}
        \frac{1}{4|\lambda_k|}\int_{1 + \frac{1}{p_k}}^{1 + \frac{4\lambda_k^2}{\e ^2}} u^{-p_k/4}(u-1)^{-1/2}\d u \leq 
         \frac{(p_k+1)^{1/2}}{4|\lambda_k|}
         \int_{1 + \frac{1}{p_k}}^{1 + \frac{4\lambda_k^2}{\e ^2}} u^{-(p_k/4 + 1/2)}\d u.
        \end{equation}
        This is equal to:
        \begin{equation}\label{def: I_k}
        I_k(\e) :=
        \begin{cases}
        \frac{(p_k +1)^{1/2}}{4|\l_k|} \frac{1}{\frac{1}{2} - \frac{p_k}{4}} \sqbrace{\paren{1 + \frac{4\l_k^2}{\e^2}}^{\frac{1}{2} - \frac{p_k}{4}} - \paren{1 + \frac{1}{p_k}}^{\frac{1}{2} - \frac{p_k}{4}}} & \text{if }p_k \neq 2, \\ 
        \frac{3^{1/2}}{4|\l_k|}  \sqbrace{\log\paren{1 + \frac{4\l_k^2}{\e^2}} - \log\paren{1 + \frac{1}{2}}} & \text{if }p_k = 2. 
        \end{cases}
        \end{equation}
        For later analysis, we write all expressions in terms of $p_k$ and $\L_1$ as these are the natural units of the problem. This gives:
                \begin{equation}
        I_k(\e) =
        \begin{cases}
        \frac{(p_k^2 +p_k)^{1/2}}{2\L_1} \frac{1}{1 - \frac{p_k}{2}} \sqbrace{\paren{1 + \frac{4\L_1^2}{\e^2}\frac{1}{p_k}}^{\frac{1}{2}\paren{1 - \frac{p_k}{2}}} - \paren{1 + \frac{1}{p_k}}^{\frac{1}{2}\paren{1  - \frac{p_k}{2}}}} & \text{if }p_k \neq 2,\\ 
        \frac{3^{1/2} 2^{1/2}}{4\L_1}  \sqbrace{\log\paren{1 + \frac{4\L_1^2}{\e^2}} - \log\paren{1 + \frac{1}{2}}} & \text{if }p_k = 2. 
        \end{cases}
        \end{equation}
    \newline 
    \newline 
    Our final bound is thus:
    \begin{equation}
        \int_{\frac{1}{2\L_1}}^\frac{1}{\e} \prod_{k=1}^\infty\frac{1}{(1 + 4 \l_k^2 t^2)^{1/4}}\d t 
\leq \prod_{k=1}^{\infty}I_k(\e)^{1/p_k}.
    \end{equation}
    To analyze this product, we consider cases (in order of simplicity of analysis), depending on the size of $p_1$. We consider the following 3 cases, and show that they provide the respective rates in $\e$:
    \begin{enumerate}
        \item $p_1 > 2$ (Linear in $\e$)
        \item $1 \leq p_1 \leq 2$ (Sublinear in $\e$ or Linear in $\e$ if $q_1 > 0$)
        \item $p_1 = 2$ (Log-Linear in $\e$ or Linear in $\e$ if $q_1 > 0$)
    \end{enumerate}
    \begin{description}
        \item[Case 1: ($p_1 > 2$)] 
        In this case, $\frac{\l_1 ^2}{\L_1^2} = \frac{1}{p_1} < \frac{1}{2}$, e.g. the first term in the sum does not "dominate" the others. In this case, we obtain the best possible rate in $\e$ (linear), provided $p_1$ is bounded away from $2$. 
        \newline
        \newline 
        By monotonicity, we know that $p_k \geq 2$ for all $k$ as well.
        In such a case, $1 - p_k/2 < 1$ and so $I_k$ may be written more suggestively:
        $$
        \frac{(p_k^2 +p_k)^{1/2}}{2\L_1} \frac{1}{ \frac{p_k}{2} -1} \sqbrace{ \paren{1 + \frac{1}{p_k}}^{\frac{1}{2}\paren{1  - \frac{p_k}{2}}} - \paren{1 + \frac{4\L_1^2}{\e^2}\frac{1}{p_k}}^{\frac{1}{2}\paren{1 - \frac{p_k}{2}}}}.
        $$
        Using the fact that $1 -\frac{p_k}{2} < 0$ and $\frac{4\L_1^2}{\e^2} > 1$, we may estimate the second term in parentheses above as:
        \begin{equation} \label{eq: Lambda pk inequality}
        \paren{1 + \frac{4\L_1^2}{\e^2}\frac{1}{p_k}}^{\frac{1}{2}\paren{1 - \frac{p_k}{2}}} \geq \paren{\frac{4\L_1^2}{\e^2} + \frac{4\L_1^2}{\e^2}\frac{1}{p_k}}^{\frac{1}{2}\paren{1 - \frac{p_k}{2}}} = \paren{1 + \frac{1}{p_k}}^{\frac{1}{2}\paren{1- \frac{p_k}{2}}} \paren{\frac{4\L_1^2}{\e^2}}^{\frac{1}{2}\paren{1 - \frac{p_k}{2}}}
        \end{equation}
        Applying this inequality and factoring, we obtain the estimate:
        \begin{align*}
        \sqbrace{ \paren{1 + \frac{1}{p_k}}^{\frac{1}{2}\paren{1  - \frac{p_k}{2}}} - \paren{1 + \frac{4\L_1^2}{\e^2}\frac{1}{p_k}}^{\frac{1}{2}\paren{1 - \frac{p_k}{2}}}} &\leq \paren{1 + \frac{1}{p_k}}^{\frac{1}{2}\paren{1  - \frac{p_k}{2}}}\sqbrace{1 - \paren{\frac{4\L_1^2}{\e^2}}^{\frac{1}{2}\paren{1 - \frac{p_k}{2}}}} \\&\leq \sqbrace{1 - \paren{\frac{2\L_1}{\e}}^{\paren{1 - \frac{p_k}{2}}}}.
        \end{align*}
        On the other hand:
        \begin{align*}
       \frac{(p_k^2 +p_k)^{1/2}}{2\L_1} \frac{1}{ \frac{p_k}{2}- 1 } &= \frac{\paren{1 + \frac{1}{p_k}}^{1/2}}{\L_1}\frac{p_k}{p_k- 2} \\&\leq \paren{\frac{3}{2}}^{3/2} \frac{1}{\L_1}\frac{p_k}{p_k - 2} \\&= \paren{\frac{3}{2}}^{3/2} \frac{1}{\L_1}\frac{1}{1 - \frac{2}{p_k}} 
        \end{align*}
        We obtained the term $3/2$ simply by substituting $p_k = 2$ into the decreasing function $1 + \frac{1}{p_k}$. 
        Thus far, we have shown:
        \begin{equation}\label{eq: Indiv Ik Estimate}
        I_k(\e) \leq \paren{\frac{3}{2}}^{3/2} \frac{1}{\L_1}\frac{1}{1 - \frac{2}{p_k}}     \sqbrace{1 - \paren{\frac{2\L_1}{\e}}^{\paren{1 - \frac{p_k}{2}}}}.
        \end{equation}
        Taking products and powers to the $1/p_k$ gives an estimate:
        \begin{align*}
        \prod_{k=1}^{\infty} I_k(\e)^{1/p_k} &\leq \paren{\frac{3}{2}}^{3/2} \frac{1}{\L_1}\prod_{k=1}^{\infty}\paren{\frac{1}{1 - \frac{2}{p_k}}}^{1/p_k}     \sqbrace{1 - \paren{\frac{2\L_1}{\e}}^{\paren{1 - \frac{p_k}{2}}}}^{1/p_k} \\ 
        &= \paren{\frac{3}{2}}^{3/2} \frac{1}{\L_1}\prod_{k=1}^\infty J_k(\e)^{1/p_k},
        \end{align*}
        where we have set:
        \begin{equation}\label{eq: def Jk}
        J_k(\e) := \frac{1}{1-\frac{2}{p_k}}\sqbrace{1 - \paren{\frac{2\L_1}{\e}}^{\paren{1 - \frac{p_k}{2}}}}.
        \end{equation}
        Using that the term in square braces is always less than $1$, we have for $k \geq 3$:
        \begin{align}\label{Eq: k geq 3 inequality}
        \prod_{k=3}^{\infty}J_k(\e)^{1/p_k}  &= \paren{\frac{1}{1 - \frac{2}{p_k}}}^{1/p_k}\sqbrace{1 - \paren{\frac{2\L_1}{\e}}^{\paren{1 - \frac{p_k}{2}}}}^{1/p_k}  \\ 
        &\leq \prod_{k=3}^{\infty}\paren{\frac{1}{1 - \frac{2}{p_k}}}^{1/p_k} \\ 
        &\leq \prod_{k=3}^{+\infty} \paren{\frac{1}{1 - \frac{2}{p_3}}}^{1/p_k} \\ 
        &= \paren{\frac{1}{1 - \frac{2}{p_3}}}^{1 - \paren{\frac{1}{p_1} + \frac{1}{p_2}}}.
        \end{align}
       We obtained the penultimate inequality by plugging $p_3$ into the decreasing function $\frac{1}{1 - \frac{2}{x}}$ and then multiplying over $k$.
        As $\frac{1}{p_3} \leq \frac{1}{3}$, the above is bounded by:
        $$
        \paren{\frac{1}{1 - \frac{2}{3}}}^{1 - \paren{\frac{1}{p_1} + \frac{1}{p_2}}} = 3^{1 - \paren{\frac{1}{p_1} + \frac{1}{p_2}}} \leq 3.
        $$
        Thus:
        \begin{align*}
        \prod_{k=1}^{\infty}I_k(\e)^{1/p_k} &= \paren{\frac{3}{2}}^{3/2} \frac{1}{\L_1} \prod_{k=1}^\infty J_k(\e)^{1/p_k}  \\
        &\leq \paren{\frac{3}{2}}^{3/2} \sqbrace{J_1(\e)^{1/p_1}J_2(\e)^{1/p_2}}\prod_{k=3}^{+\infty}J_k(\e)^{1/p_k} \\&\leq 3 \paren{\frac{3}{2}}^{3/2} \frac{1}{\L_1} \sqbrace{J_1(\e)^{1/p_1}J_2(\e)^{1/p_2}}.
        \end{align*}
        Recalling the definition of $J_k(\e)$ from \eqref{eq: def Jk}
        gives that the entire product can be bounded by:
        \begin{align*}
          3\paren{\frac{3}{2}}^{3/2} \frac{1}{\L_1} \paren{\frac{1}{1 - \frac{2}{p_1}}}^{1/p_1} \paren{\frac{1}{1 - \frac{2}{p_2}}}^{1/p_2}    \sqbrace{1 - \paren{\frac{2\L_1}{\e}}^{\paren{1 - \frac{p_1}{2}}}}^{1/p_1}     \sqbrace{1 - \paren{\frac{2\L_1}{\e}}^{\paren{1 - \frac{p_2}{2}}}}^{1/p_2}.
        \end{align*}
        Recalling the definitions $A_{p,i}$ from the statement of Theorem \ref{Thm:Full Bound} gives the final estimate:
        \begin{align*}
        \frac{3^{5/2}}{2^{3/2}}\frac{1}{\L_1}\paren{\frac{1}{2}}^{\frac{1}{p_1} + \frac{1}{p_2}}A_{p,1}^{1/p_1} A_{p,2}^{1/p_2} \leq \frac{6}{\L_1} A_{p,1}^{1/p_1}A_{p,2}^{1/p_2},
        \end{align*}
        which is the desired estimate.
        \item[Case 2: ($1 \leq p_1 < 2$)]
        In such a case, the term corresponding to $\l_1$ "dominates" the remaining terms in the sum. Heuristically, we should expect slower rates in this case. As described in Section 4, we can obtain both sublinear and linear rates in $\e$ in this case. The linear rate is inversely proportional to $\paren{\L_1 \L_2}^{1/2}$, whereas the sublinear rate is inversely proportional to $\L_1$.
        \newline 
        \newline
        Finally, notice that in this case $p_1 < 2$ means that $p_k \geq 2$ for all $k$, as if this were not true we would have $1 \geq  \frac{1}{p_1} + \frac{1}{p_k} > \frac{1}{2} + \frac{1}{2} = 1$ for some $k$. 

         \begin{description}
            \item[Case 2.1: (Sublinear Concentration)]
            As $p_1 < 2$, we may write:
            \[
                    I_1(\e) = \frac{(p_1^2 +p_1)^{1/2}}{2\L_1} \frac{1}{1 - \frac{p_1}{2}} \sqbrace{\paren{1 + \frac{4\L_1^2}{\e^2}\frac{1}{p_1}}^{\frac{1}{2}\paren{1 - \frac{p_1}{2}}} - \paren{1 + \frac{1}{p_1}}^{\frac{1}{2}\paren{1  - \frac{p_1}{2}}}}.
                    \]
            For the term in square braces, we reason similarly (but essentially in the reverse fashion as $1 - \frac{p_1}{2} > 0$) as we did in the $p_1 \geq 2$ case (see equation \eqref{eq: Lambda pk inequality}) and obtain:
            \begin{align*}
            \sqbrace{\paren{1 + \frac{4\L_1^2}{\e^2}\frac{1}{p_1}}^{\frac{1}{2}\paren{1 - \frac{p_1}{2}}} - \paren{1 + \frac{1}{p_1}}^{\frac{1}{2}\paren{1  - \frac{p_1}{2}}}} &\leq\paren{1 + \frac{1}{p_1}}^{\frac{1}{2}\paren{1 - \frac{p_1}{2}}} \sqbrace{\paren{\frac{4\L_1^2}{\e^2}}^{\frac{1}{2}\paren{1 - \frac{p_1}{2}}} - 1} \\ 
            &\leq 2^{1/2}\sqbrace{\paren{\frac{2\L_1}{\e}}^{1 - \frac{p_1}{2}} -1 }.
            \end{align*}
            And as before:
            $$
            \frac{(p_1^2 +p_1)^{1/2}}{2\L_1} \frac{1}{1 - \frac{p_1}{2}} = \frac{\paren{1 + \frac{1}{p_1}}^{1/2}}{\L_1}\frac{p_1}{2 - p_1} \leq \frac{2^{1/2}}{\L_1}\frac{1}{\frac{2}{p_1} -1}.
            $$
            So in total:
            $$
            I_1(\e) \leq \frac{2}{\L_1}\frac{1}{\frac{2}{p_1} -1}\sqbrace{\paren{\frac{2\L_1}{\e}}^{1 - \frac{p_1}{2}} -1 } \leq \frac{4}{\L_1}\frac{1}{\frac{2}{p_1} -1}\sqbrace{\paren{\frac{2\L_1}{\e}}^{1 - \frac{p_1}{2}} -1 } .
            $$
            On the other hand, for the remaining $I_k(\e)$, using the fact that $p_k \geq 2$ and mimicking the approach of the previous case gives:
            \begin{align*}
            I_{k}(\e) &\leq \paren{\frac{3}{2}}^{3/2} \frac{1}{\L_1}\frac{1}{1 - \frac{2}{p_k}}     \sqbrace{1 - \paren{\frac{2\L_1}{\e}}^{\paren{1 - \frac{p_k}{2}}}} \\ 
            &:= \paren{\frac{3}{2}}^{3/2} \frac{1}{\L_1}J_k(\e) \\ 
            &\leq \frac{6}{\L_1} J_{k}(\e).
            \end{align*}
            By the same argument as in the first case as carried out in equation \eqref{Eq: k geq 3 inequality} we obtain:
            \[\prod_{k=2}^{\infty}I_k(\e)^{1/p_k} \leq \paren{\frac{6}{\L_1}}^{1 - \frac{1}{p_1}} J_2(\e)^{1/p_2}.
            \]
            Putting this together and again recalling the definition  of $J$ from \eqref{eq: def Jk} gives:
            \begin{align*}
                \prod_{k=1}^{\infty}I_k(\e)^{1/p_k} &= I_1(\e)^{1/p_1}\prod_{k=2}^{\infty}I_k(\e)^{1/p_k} \\
                &\leq \paren{\frac{6}{\L_1}}^{1 - \frac{1}{p_1}}\paren{\frac{6}{\L_1}}^{\frac{1}{p_1}}\sqbrace{J_1(\e)^{1/p_1}J_2(\e)^{1/p_2}} \\
                &\leq \frac{6}{\L_1} \paren{\frac{1}{\frac{2}{p_1} -1}}^{1/p_1}         \paren{\frac{1}{1 - \frac{2}{p_2}}}^{1/p_2}  \sqbrace{\paren{\frac{2\L_1}{\e}}^{1 - \frac{p_1}{2}} -1 }
   \sqbrace{1 - \paren{\frac{2\L_1}{\e}}^{\paren{1 - \frac{p_2}{2}}}}^{1/p_2} \\
   &= \frac{6}{\L_1}B_{p,1}^{1/p_1}A_{p,2}^{1/p_2}.
            \end{align*}
            which is the desired result.
            \item[Case 2.2: (Linear Concentration,  $q_2> 1$ i.e. $\l_3 \neq 0$)]
            In such a case, the $\l_1$ term still dominates the others, however there are enough nonzero terms in the sum to obtain a linear rate in $\e$. Thus, we need to derive a bound that simultaneously exploits the largeness of $\l_1$ while also including the contribution of smaller $\l_k$.  
            \newline 
            \newline 
            To estimate the integral, we have $\frac{1}{2\L_1} < \frac{1}{2\L_2} \leq \min \cbrace{\frac{1}{\e}, \frac{1}{2\L_2}}$, so similarly to the proof of Theorem \ref{Thm:Full Bound} (in particular, Step 2 in Section \ref{Sec: Proof}), we divide the region of integration and obtain for any $f$ nonnegative and measurable:
            $$
            \int_{\frac{1}{2\L_1}}^{\frac{1}{\e}} f(t) \d t \leq \int_{\frac{1}{2\L_1}}^{\frac{1}{2\L_2}} f(t)\d t + \mathbbm{1}{\{\e \leq 2 \L_2\}}\int_{\frac{1}{2\L_2}}^{\frac{1}{\e}} f(t)\d t .
            $$
            We define:
            $$
            \mathrm{I} :=  \int_{\frac{1}{2\L_1}}^{\frac{1}{\e}} \prod_{k=1}^{\infty} \frac{1}{(1 + 4 \l_k^2t^2)^{1/4}} \d t, \quad \mathrm{II}:= \int_{\frac{1}{2\L_2}}^{\frac{1}{\e}} \prod_{k=1}^{\infty} \frac{1}{(1 + 4 \l_k^2t^2)^{1/4}} \d t.
            $$
            For $\mathrm{I}$, we estimate:
            \begin{align*}
            \int_{\frac{1}{2 \L_1}}^{\frac{1}{2\L_2}} \prod_{k=1}^\infty \frac{1}{(1 + 4\l_k^2 t^2)^{1/4}} \d t 
            &\leq \int_{\frac{1}{2\L_1}}^{\frac{1}{2\L_2}} \frac{1}{(1 + 4\l_1^2t^2)^{1/4}} \d t \\
            &=\frac{1}{4|\l_1|}\int_{1 + \frac{1}{p_1}}^{1 + \frac{\l_1^2}{\L_2^2}} \frac{1}{u^{1/4}(u-1)^{1/2}} \d u \\
            &\leq \frac{(p_1^2 + p_1)^{1/2}}{4 \L_1}\int_{1 + \frac{1}{p_1}}^{1 + \frac{\l_1^2}{\L_2^2}} u^{-3/4} \d u \\
            &= \frac{(p_1^2 +p_1)^{1/2}}{\L_1}\sqbrace{\paren{1 + \frac{\l_1^2}{\L_2^2}}^{1/4} - \paren{1 + \frac{1}{p_1}}^{1/4} } 
            \\
            &\leq \frac{(p_1^2 +p_1)^{1/2}}{\L_1}\paren{1 + \frac{1}{p_1}}^{1/4}\sqbrace{\frac{\L_1^{1/2}}{\L_2^{1/2}} - 1} 
            \\& \leq \frac{6^{1/2}2^{1/4}}{\L_1}\sqbrace{\frac{\L_1^{1/2}}{\L_2^{1/2}} - 1} \\
            &\leq \frac{3}{\L_1}\sqbrace{\frac{\L_1^{1/2}}{\L_2^{1/2}} - 1} \\ 
             &\leq \frac{3}{(\L_1\L_2)^{1/2}}.
            \end{align*} 
            For $\mathrm{II}$, we begin with the estimate:
            \begin{align}
            \prod_{k=1}^{\infty} \frac{1}{(1 + 4 \l_k^2t^2)^{1/4}} &\leq \frac{1}{4^{1/4}|\l_1|^{1/2}t^{1/2}}\prod_{k=2}^{\infty} \frac{1}{(1 + 4 \l_k^2t^2)^{1/4}} \\
            &= \prod_{k=2}^\infty \paren{\frac{1}{(1 + 4 \l_k^2t^2)^{1/4}}\frac{1}{ (4^{1/4}|\l_1|^{1/2}t^{1/2})^{1/q_k}}}.
            \end{align}
            Applying H\"older's inequality to the final expression with weights $q_k$, $k \geq 2$ gives:
            \begin{align*}
            \mathrm{II} &\leq \prod_{k=2}^{\infty} \paren{\int_{\frac{1}{2\L_2}}^{\frac{1}{\e}} \frac{1}{4^{1/4}|\l_1|^{1/2} t^{1/2}(1 + 4\l_k^2t^2)^{q_k/4}} \d t}^{1/q_k} \\ &= \frac{1}{4^{1/4}|\l_1|^{1/2}}\prod_{k=2}^{\infty} \paren{\int_{\frac{1}{2\L_2}}^{\frac{1}{\e}} \frac{1}{t^{1/2}(1 + 4\l_k^2t^2)^{q_k/4}} \d t}^{1/q_k} .
            \end{align*}
            For the inner integrals, we proceed as we did in the $p_1 > 2$ case (see equations \eqref{eq: int substitution}, \eqref{eq: poly sub}) and thereafter), making the usual substitution $u = 1+ 4 \l_k^2 t^2$  and obtain:
            \begin{align*}
            \frac{1}{4|\l_k|}\int_{1 + \frac{1}{q_k}}^{1 + \frac{4 \l_k^2}{\e^2}}\frac{1}{\frac{(u-1)^{1/4}}{(2|\l_k|)^{1/2}} u^{q_k/4}} \frac{1}{(u-1)^{1/2}} \d u &= \frac{1}{2^{3/2}|\l_k|^{1/2}}\int_{1 + \frac{1}{q_k}}^{1 + \frac{4 \l_k^2}{\e^2}} u^{-q_k/4}(u-1)^{-3/4} \d u 
            \\
            &\leq \frac{(q_k + 1)^{3/4}}{2^{3/2}|\l_k|^{1/2}}\int_{1 + \frac{1}{q_k}}^{1 + \frac{4 \l_k^2}{\e^2}} u^{-q_k/4 -3/4} \d u . 
            \end{align*}
            Rewriting the constant in front in terms of $q_k$, $\L_2$ gives:
            $$
            \frac{(q_k +1)^{3/4}}{2^{3/2}\frac{\L_2^{1/2}}{q_k^{1/4}}} = \frac{(q_k^{4/3} + q_k^{1/3})^{3/4}}{2^{3/2}\L_2^{1/2}}.
            $$
            Because $q_k > 1$, we must have $q_k/ 4 + 3/4 > 1 $ and so the integrals above can be written as:
            $$
            \frac{1}{ \frac{q_k}{4} - \frac{1}{4}}\sqbrace{\paren{1 + \frac{1}{q_k}}^{\frac{1}{4} - \frac{q_k}{4}} - \paren{1+\frac{4 \l_k^2}{\e^2}}^{\frac{1}{4} - \frac{q_k}{4}}} = \frac{4}{q_k - 1}\sqbrace{\paren{1 + \frac{1}{q_k}}^{\frac{1}{4} - \frac{q_k}{4}} - \paren{1+\frac{4\L_2^2 \frac{1}{q_k}}{\e^2}}^{\frac{1}{4} - \frac{q_k}{4}}}.
            $$
            Setting
            \begin{align*}
            J_k(\e) &:= \frac{(q_k + 1)^{3/4}}{2^{3/2}|\l_k|^{1/2}}\int_{1 + \frac{1}{q_k}}^{1 + \frac{4 \l_k^2}{\e^2}} u^{-q_k/4 -3/4} \d u 
            \\
            &=   \frac{4(q_k^{4/3} + q_k^{1/3})^{3/4}}{2^{3/2}\L_2^{1/2}}\frac{1}{1 - q_k} \sqbrace{\paren{1 + \frac{1}{q_k}}^{\frac{1}{4} - \frac{q_k}{4}} - \paren{1+\frac{4\L_2^2 \frac{1}{q_k}}{\e^2}}^{\frac{1}{4} - \frac{q_k}{4}}}.
            \end{align*}
            we obtain:
            \[
            \mathrm{II} \leq \frac{1}{4^{1/4}|\l_1|} \prod_{k=2}^{\infty} J_k(\e)^{1/q_k} = \frac{p_1^{1/2}}{4^{1/4}\L_1} \prod_{k=2}^{\infty} J_k(\e)^{1/q_k}.
            \]
            To estimate each $J_k$ individually, we proceed similarly as we did from equation \eqref{eq: Lambda pk inequality} and onwards. For the term in square brackets:
            \[
            \sqbrace{\paren{1 + \frac{1}{q_k}}^{\frac{1}{4} - \frac{q_k}{4}} - \paren{1+\frac{4\L_2^2 }{\e^2}\frac{1}{q_k}}^{\frac{1}{4} - \frac{q_k}{4}}} \leq  \paren{1 + \frac{1}{q_k}}^{\frac{1}{4} - \frac{q_k}{4}}\sqbrace{1 - \paren{\frac{2\L_2}{\e}}^{\frac{1}{2} - \frac{q_k}{2}}} \leq  \sqbrace{1 - \paren{\frac{2\L_2}{\e}}^{\frac{1}{2} - \frac{q_k}{2}}}.
            \]
            And for the coefficient in front:
            $$
             \frac{4(q_k^{4/3} + q_k^{1/3})^{3/4}}{2^{3/2}\L_2^{1/2}} \frac{1}{q_k -1}  \leq 4\frac{2^{3/4}q_k}{2^{3/2}\L_2^{1/2}}\frac{1}{q_k  - 1} = \frac{4}{2^{3/4}} \frac{1}{\L_2^{1/2}} \frac{1}{1 - \frac{1}{q_k} }.
            $$
            By the same monotonicity argument made in the $p_1 > 2$ case (see equation \eqref{Eq: k geq 3 inequality}) :
            $$
            \frac{4}{2^{3/4}} \frac{1}{\L_2^{1/2}} \frac{1}{1 - \frac{1}{q_k}}\sqbrace{1 - \paren{\frac{2\L_2}{\e}}^{\frac{1}{2} - \frac{q_k}{2}}} \leq             \frac{4}{2^{3/4}} \frac{1}{\L_2^{1/2}} \frac{1}{1 - \frac{1}{q_2}}\sqbrace{1 - \paren{\frac{2\L_2}{\e}}^{\frac{1}{2} - \frac{q_2}{2}}}.
            $$
            So, it follows:
            \begin{align*}
            \mathrm{II} &\leq \frac{p_1^{1/2}}{4^{1/4}\L_1^{1/2}} \prod_{k=2}^\infty I_k(\e)^{1/q_k} \\ 
            &\leq \frac{4^{3/4}} {2^{3/4}} \frac{p_1^{1/2}}{\L_1^{1/2}} \frac{1}{\L_2^{1/2}} \frac{1}{1 - \frac{1}{q_2}}\sqbrace{1 - \paren{\frac{2\L_2}{\e}}^{\frac{1}{2} - \frac{q_2}{2}}} \\
            &\leq \frac{2^{5/4}}{(\L_1\L_2)^{1/2}}  \frac{1}{1 - \frac{1}{q_2}}\sqbrace{1 - \paren{\frac{2\L_2}{\e}}^{\frac{1}{2} - \frac{q_2}{2}}} \\
            &\leq \frac{3}{(\L_1\L_2)^{1/2}}  \frac{1}{1 - \frac{1}{q_2}}\sqbrace{1 - \paren{\frac{2\L_2}{\e}}^{\frac{1}{2} - \frac{q_2}{2}}}.
            \end{align*}
            Adding the bounds for $\mathrm{I}, \mathrm{II}$  together and replacing universal constants by their maxima/nicer numbers gives:
            $$
            \frac{3}{(\L_1\L_2)^{1/2}}\paren{1   +  \mathbbm{1}{\{\e \leq 2 \L_2\}}\frac{1}{1 - \frac{1}{q_2}}\sqbrace{1 - \paren{\frac{2\L_2}{\e}}^{\frac{1}{2} - \frac{q_2}{2}}}}.
            $$
            as we desire.
            \item[Case 3:(Linear-times-Logarithmic Concentration, $p_1 = 2$)]  This is the threshold case in which we obtain a linear-times-log rate. We write: 
            \begin{align*}I_1(\e)^{1/p_1} &=  \paren{\frac{3^{1/2}2^{1/2}}{4\L_1}  \sqbrace{\log\paren{1 + \frac{4\L_1^2}{\e^2}\frac{1}{2}} - \log\paren{1 + \frac{1}{2}}}}^{1/2} \\ &\leq \paren{\frac{3^{1/2}2^{1/2}}{4\L_1}  \sqbrace{\log\paren{\frac{4\L_1^2}{\e^2}\sqbrace {1 + \frac{1}{2}}} - \log\paren{1 + \frac{1}{2}}}}^{1/2} \\
            &= \paren{\frac{3^{1/2}2^{1/2}}{4\L_1}  \log\paren{\frac{4\L_1^2}{\e^2}}}^{1/2} \\ 
            &= \paren{\frac{3^{1/2}2^{1/2}}{2\L_1}  \log\paren{\frac{2\L_1}{\e}}}^{1/2}.
            \end{align*}
            Next, we know that for the remaining $p_k$ we must have $p_k \geq 2$. We briefly break into two short subcases, the first being nearly trivial.  
            \begin{description}
                \item[Case 3.1: $(p_2=2)$] In this case, we have $p_2 = 2$ as well, which tells us that $\l_1 = \l_2$ and $\l_3$ onwards are equal to $0$. In this case:
                $$
                \prod_{k=1}^\infty I_k(\e)^{1/p_k} = I_1(\e)^{1/2}I_2(\e)^{1/2} = I_1(\e) \leq \frac{3^{1/2}2^{1/2}}{2\L_1}  \log\paren{\frac{2\L_1}{\e}} \leq \frac{3}{\L_1} \log\paren{\frac{2\L_1}{\e}}.
                $$
            \item[Case 3.2: $(p_2 > 2)$] In this case, (porting over calculations made in the $p_1 > 2$ case) we have for any $k$:
            $$
              \prod_{k=2}^{\infty}I_k(\e) \leq \paren{\frac{4}{\L_1}}^{1 - \frac{1}{p_1}} \paren{\frac{1}{1 - \frac{2}{p_2}}}^{1/p_2}     \sqbrace{1 - \paren{\frac{2\L_1}{\e}}^{\paren{1 - \frac{p_2}{2}}}}^{1/p_2}.
            $$
            We write:
            $$
            \frac{1}{1 - \frac{2}{p_2}}     \sqbrace{1 - \paren{\frac{2\L_1}{\e}}^{\paren{1 - \frac{p_2}{2}}}} = \frac{\frac{p_2}{2}}{\frac{p_2}{2} - 1} \sqbrace{1 - \paren{\frac{\e}{2\L_1}}^{\frac{p_2}{2} - 1}} = 
 - \frac{h\paren{\frac{p_2}{2} - 1} - h(0)}{ \frac{p_2}{2} -1}
            $$
            where $h(x) = \paren{\frac{\e}{2\L_1}}^{x} = \exp\paren{x \log\paren{\frac{\e}{2\L_1}}}$. By the mean-value theorem:
            $$
            -\frac{h\paren{\frac{p_2}{2} - 1} - h(0)}{ \frac{p_2}{2} -1} \leq \log\paren{\frac{2\L_1}{\e}}.
            $$
            Hence:
            $$
            \paren{\frac{4}{\L_1}}^{1 - \frac{1}{p_1}} \paren{\frac{1}{1 - \frac{2}{p_2}}}^{1/p_2}     \sqbrace{1 - \paren{\frac{2\L_1}{\e}}^{\paren{1 - \frac{p_2}{2}}}}^{1/p_2} \leq \paren{\frac{4}{\L_1}}^{1 - \frac{1}{p_1}} \paren{p_2\log\paren{\frac{2\L_1}{\e}}}^{1/p_2}.
            $$
            Multiplying by $I_1^{1/p_1}$ and taking the maximum of all constants gives a bound:
            $$
            \frac{4}{\L_1} p_2^{1/p_2} \log\paren{\frac{2\L_1}{\e}}^{\frac{1}{2} + \frac{1}{p_2}} \leq \frac{4 e^{1/e}}{\L_1}\log\paren{\frac{2\L_1}{\e}}^{\frac{1}{2} + \frac{1}{p_2}} \leq \frac{6}{\L_1}\log\paren{\frac{2\L_1}{\e}}^{\frac{1}{2} + \frac{1}{p_2}}
            $$
            as desired.
            \end{description}
        \end{description}
    \end{description}
    \end{proof}
    \section{Proof that Theorem \ref{Thm:Full Bound} implies Theorem \ref{Thm:User Friendly Bound}}
    \label{Pf: Thm 1 Implies Thm 2}

    \begin{proof}
    We break into cases, again depending on $p_1$.
    \begin{description}
        \item[Case 1 $(p_1 \geq 2)$:] In this case, we have:
        $$
          \sqbrace{1 - \paren{\frac{2\L_1}{\e}}^{\paren{1 - \frac{p_1}{2}}}}^{1/p_1}     \sqbrace{1 - \paren{\frac{2\L_1}{\e}}^{\paren{1 - \frac{p_2}{2}}}}^{1/p_2} \leq 1
        $$
        as $\frac{2\L_1}{\e} > 1$ and $1 - \frac{p_1}{2} < 0$.
        \newline 
        \newline 
        Setting $p_1^*$ to satisfy $\frac{1}{p_1} + \frac{1}{p_1^*} = 1$, we have that $\frac{1}{p_2} \leq \frac{1}{p_1^*}$ and so for any $x \geq 1$ we must have $x^{1/p_2} \leq x^{1/p*}$. Thus:
        \begin{align*}
        &\frac{1}{\L_1} \paren{\frac{1}{1 - \frac{2}{p_1}}}^{1/p_1} \paren{\frac{1}{1 - \frac{2}{p_2}}}^{1/p_2}  \sqbrace{1 - \paren{\frac{2\L_1}{\e}}^{\paren{1 - \frac{p_1}{2}}}}^{1/p_1}     \sqbrace{1 - \paren{\frac{2\L_1}{\e}}^{\paren{1 - \frac{p_2}{2}}}}^{1/p_2}  \\
        &\leq \frac{1}{\L_1} \paren{\frac{1}{1 - \frac{2}{p_1}}}^{1/p_1} \paren{\frac{1}{1 - \frac{2}{p_2}}}^{1/p_2} \\ 
        &\leq \frac{1}{\L_1} \paren{\frac{1}{1 - \frac{2}{p_1}}}^{1/p_1} \paren{\frac{1}{1 - \frac{2}{p_2}}}^{1/p*} \\
        &\leq \frac{1}{\L_1} \max\cbrace{\frac{1}{1-\frac{2}{p_1}},\frac{1}{1-\frac{2}{p_2}}} = \frac{1}{\L_1} \frac{1}{1 - \frac{2}{p_1}}
        \end{align*}
        as desired.
        \item [Case 2: $(p_1 = 2)$]
        $\mathcal{J}, \mathcal{I}$ agree exactly in this case so there is nothing to prove. 
        \item[Case 3: $(1 \leq p_1 < 2)$] This case is analogous to $p_1 \geq 2$ case. We have that:
        $$\sqbrace{\paren{\frac{2\L_1}{\e}}^{1 - \frac{p_1}{2}} -1 }^{1/p_1}
   \sqbrace{1 - \paren{\frac{2\L_1}{\e}}^{1 - \frac{p_2}{2}}}^{1/p_2} \leq \paren{\frac{2\L_1}{\e}}^{\frac{1}{p_1} - \frac{1}{2}}
        $$
        And:
        $$
        \frac{1}{\L_1}\paren{\frac{1}{\frac{2}{p_1} -1}}^{1/p_1}         \paren{\frac{1}{1 - \frac{2}{p_2}}}^{1/p_2} \leq \frac{1}{\L_1}\paren{\frac{1}{\frac{2}{p_1} -1}}^{1/p_1}         \paren{\frac{1}{1 - \frac{2}{p^*}}}^{1/p*}
        $$
        where $p^*$ is defined as before. The reason for this is that:
        $$
        x \mapsto \paren{\frac{1}{1 - \frac{2}{x}}}^{1/x}
        $$
        is decreasing on the interval $[2, +\infty)$. Simplifying gives:
        $$
        \paren{\frac{1}{1 - \frac{2}{p^*}}}^{1/p*} = \paren{\frac{1}{\frac{1}{p_1} - \frac{1}{p^*}}}^{1/p^*} = \paren{\frac{1}{\frac{2}{p_1} - 1}}^{1/p^*} 
        $$
        So that in total, we obtain the estimate
        $$ \paren{\frac{1}{\frac{2}{p_1} - 1}}^{1/p_1} \paren{\frac{1}{\frac{2}{p_1} - 1}}^{1/p^*} \paren{\frac{2\L_1}{\e}}^{\frac{1}{p_1} - \frac{1}{2}} = \paren{\frac{1}{\frac{2}{p_1} - 1}}\paren{\frac{2\L_1}{\e}}^{\frac{1}{p_1} - \frac{1}{2}}$$
        which is what we desired.
        \item[Case 4 $(1 \leq p_1 < 2, q_1 >) $:]
        We have:
        \begin{align*}
        \frac{1}{(\L_1\L_2)^{1/2}}\paren{1   +  \mathbbm{1}{\{\e \leq 2 \L_2\}}\frac{1}{1 - \frac{1}{q_2}}\sqbrace{1 - \paren{\frac{2\L_2}{\e}}^{\frac{1}{2} - \frac{q_2}{2}}}}  &\leq \frac{1}{(\L_1\L_2)^{1/2}}\paren{1   +  \mathbbm{1}{\{\e \leq 2 \L_2\}}\frac{1}{1 - \frac{1}{q_2}}} \\&\leq \frac{1}{(\L_1\L_2)^{1/2}}\paren{1   + \frac{1}{1 - \frac{1}{q_2}}}
        \end{align*}
        as we need.
    \end{description}
    \end{proof}
    \section{Phase Transition for the Square of the Mean Statistic}
    Let $X, X_1, \cdots, X_n \overset{IID}{\sim} P_n$ with $\E[X] = \theta_n$ and $\E[(X-\theta_n)] = \sigma^2_n$. We consider the $U$-statistic with kernel $h = x-\theta_n$, e.g.:
    \begin{equation}
    U_n := \frac{1}{{n \choose 2}}\sum_{1 \leq i < j \leq n} h(X_i, X_j) = \frac{1}{{n \choose 2}}\sum_{1 \leq i < j \leq n}(X_i-\theta_n)(X_j - \theta_n)
    \end{equation}
    To see where and why a phase-transition in the limiting behavior of this statistic occurs, we decompose $U_n$ as follows - we first write:
    $$
    X_iX_j - \theta_{n}^2 = (X_i - \theta_n)(X_j - \theta_n) + \theta_n(X_j - \theta_n) + (X_i - \theta_n)\theta_n +\theta_n^2  - \theta_n^2 
    $$
    Summing gives:
    $$
    \sum_{\uind{i}{j}{n}} (X_i - \theta_n)(X_j - \theta_n) + \theta_n(X_j - \theta_n) + (X_i - \theta_n)\theta_n 
    $$
    For the second term:
    $$
    \sum_{\uind{i}{j}{n}} \theta_n(X_j - \theta_n) = \sum_{i=1}^{n} \sum_{ j = i + 1}^{n} \theta_n(X_j - \theta_n) = \sum_{j=2}^{n}\sum_{i = 1}^{j-1}\theta_n(X_j - \theta_n) = \theta_n \sum_{j=2}^{n} (j-1)(X_j - \theta_n)
    $$
    Even more simply, for the third:
    $$
    \sum_{\uind{i}{j}{n}} (X_i - \theta_n)\theta_n = \sum_{ i =1}^{n-1} \sum_{ j = i+1}^{n} (X_i - \theta_n)\theta_n = \theta_n \sum_{i = 1}^{n - 1}(n- (i + 1))(X_i - \theta_n) = \theta_n \sum_{i=2}^{n}(n-i)(X_i - \theta_n)
    $$
    So:
    $$
     \sum_{\uind{i}{j}{n}}X_iX_j = \sum_{\uind{i}{j}{n}}(X_i - \theta_n)(X_j - \theta_n) + \theta_n(n-1) \sum_{k=2}^{n}(X_k - \theta_n)
    $$
    and finally arrive at:
    \begin{equation}
            U_n = \underbrace{\frac{1}{{n \choose 2}}\sum_{\uind{i}{j}{n}}(X_i - \theta_n)(X_j - \theta_n)}_{:=U^C_n} +  \underbrace{\frac{2\theta_n}{n}\sum_{k=2}^{n}(X_k - \theta_n)}_{:=U_n^G}
    \end{equation}
        The second term, being the sum of IID random variables, can be thought of as the Gaussian part of the statistic, while the first term can be thought of as the quadratic/chi-square part.
\subsubsection{Analysis of Limiting Behavior}
\label{Limiting Behavior of Un}
    Let us examine this bound in some typical scenarios and use some heuristics to understand which regimes lead to Gaussian behavior and which regimes lead to chi-square behavior as $n \to \infty$. Throughout, we make the following two assumptions on the distribution $P_n$:
        \begin{description}
    \item[Assumption 1 (CLT):]      \begin{equation}
        \frac{1}{n^{1/2}}\sum_{k=1}^{n} \frac{X_k - \theta_n}{\sigma_n} \underset{D}{\to} \Normal(0,1)
        \end{equation}
     \item[Assumption 2 (LLN):]  
     \begin{equation}
        \frac{1}{n}\sum_{k=1}^{n} \frac{(X_k - \theta_n)^2}{\sigma_n} \underset{\bb{P}}{\to} 1
        \end{equation}
    \end{description}
    Note that these assumptions both hold true in the classical case when $\theta_n = \theta$, $\sigma_n = \sigma > 0$, e.g. both quantities are fixed.
    \newline 
    \newline 
    With these assumptions in hand, we can show that the statistics $n^{1/2}U_n$, $nU_n$ have tractable limits in a variety of cases, much like what occurs in the classical CLT for U-statistics. 
    \begin{description}
        \item[\textbf{Case 1:} $\theta_n \sigma_n \to C_1 \neq 0$   \textbf{and} $\sigma_n = o(n^{1/4})$]
        In this case, we will show that $n^{1/2}U_n \underset{D}{\to} 2C_1Z$. 
        \newline 
        \newline 
        We first show that $n^{1/2}U_n^C$ converges to $0$ in probability. We have:
        \begin{align}
            n^{1/2}U_n^C = \frac{2}{n^{1/2}(n-1)}\sqbrace{\paren{\sum_{i=1}^{n}X_i - \theta_n}^2 - \sum_{i=1}^{n}(X_i - \theta_n)^2} 
        \end{align}
        For the first term, we write:
        $$
        \frac{2}{n^{1/2}(n-1)}\paren{\sum_{i=1}^{n}X_i - \theta_n}^2 = \frac{2\sigma_n^2}{n^{1/2}} \paren{\frac{1}{(n-1)^{1/2}} \sum_{i=1}^{n}\frac{X_i - \theta_n}{\sigma_n}}^2 \underset{\bb P}{\to} 0  
        $$ 
        by the growth rate assumption on $\sigma_n$ and the fact that the term in parentheses converges in distribution to a Gaussian. For the second term:
        $$
        \E\sqbrace{\frac{2}{n^{1/2}(n-1)}\sum_{i=1}^{n}(X_i - \theta_n)^2} = \frac{n\sigma_n^2}{n^{1/2}(n-1)} \to 0 
        $$
        by the assumption on $\sigma_n$. Thus, $n^{1/2}U_n^C$ converges to $0$ in probability. For the second term, we have:
    $$
    n^{1/2} U_n^G = 2\theta_n \sigma_n \cdot \frac{1}{n^{1/2}} \sum_{k=2}^{n} \frac{(X_k - \theta_n)}{\sigma_n} \underset{D}{\to} 2C_1Z
    $$
    as desired.
        \item[Case 2:  $\theta_n\sigma_n \to C_1 \neq 0$ and $\frac{\sigma_n^2}{n^{1/2}} \to C_2 > 0$]
        In this case, we will show that $n^{1/2}U_n$ has a limit of the form:
        $$
        2C_1 Z_1 + 2C_2(Z_1^2 - 1)
        $$
        e.g. a non-central chi-square random variable.
        Repeating the same calculations as above gives us that:
        $$
        n^{1/2}U_n^C = \frac{2}{n^{1/2}(n-1)}\sqbrace{\paren{\sum_{i=1}^{n}X_i - \theta_n}^2 - \sum_{i=1}^{n}(X_i - \theta_n)^2} 
        $$
        Again for the first term:
        $$
        \frac{2}{n^{1/2}(n-1)}\paren{\sum_{i=1}^{n}X_i - \theta_n}^2 = \frac{2\sigma_n^2}{n^{1/2}} \paren{\frac{1}{(n-1)^{1/2}} \sum_{i=1}^{n}\frac{X_i - \theta_n}{\sigma_n}}^2 \underset{D}{\to} 2C_2Z^2
        $$
        By assumption and the continuous mapping theorem. For the second term:
        $$
        \frac{2}{n^{1/2}(n-1)}\sum_{i=1}^{n}(X_i - \theta_n)^2 = \frac{2\sigma_n^2}{n^{1/2}} \frac{1}{(n-1)}\sum_{k=1}^{n} \frac{(X_i - \theta_n)}{\sigma_n^2} \underset{\bb{P}}{\to} 2C_2
        $$
        Finally, for $n^{1/2}U_n^G$:
        $$
n^{1/2} U_n^G = 2\theta_n \sigma_n \cdot \frac{1}{n^{1/2}} \sum_{k=2}^{n} \frac{(X_k - \theta_n)}{\sigma_n} \underset{D}{\to} 2C_1 Z          
$$
Adding these two limits gives us our desired result. 
\item[Case 3:  $\theta_n\sigma_n \to 0 $ and $\frac{\sigma_n^2}{n^{1/2}} \to C_2 > 0$]
        In this case, we will show that $n^{1/2}U_n$ has a limit of the form:
        $$
        2C_2(Z_1^2 - 1)
        $$
        Repeating the same calculations as above gives us that:
        $$
        n^{1/2}U_n^C = \frac{2}{n^{1/2}(n-1)}\sqbrace{\paren{\sum_{i=1}^{n}X_i - \theta_n}^2 - \sum_{i=1}^{n}(X_i - \theta_n)^2} 
        $$
        Again for the first term:
        $$
        \frac{2}{n^{1/2}(n-1)}\paren{\sum_{i=1}^{n}X_i - \theta_n}^2 = \frac{2\sigma_n^2}{n^{1/2}} \paren{\frac{1}{(n-1)^{1/2}} \sum_{i=1}^{n}\frac{X_i - \theta_n}{\sigma_n}}^2 \underset{D}{\to} 2C_2Z^2
        $$
        By assumption and the continuous mapping theorem. For the second term:
        $$
        \frac{2}{n^{1/2}(n-1)}\sum_{i=1}^{n}(X_i - \theta_n)^2 = \frac{2\sigma_n^2}{n^{1/2}} \frac{1}{(n-1)}\sum_{k=1}^{n} \frac{(X_i - \theta_n)}{\sigma_n^2} \underset{\bb{P}}{\to} 2C_2
        $$
        Finally, for $n^{1/2}U_n^G$:
        $$
n^{1/2} U_n^G = 2\theta_n \sigma_n \cdot \frac{1}{n^{1/2}} \sum_{k=2}^{n} \frac{(X_k - \theta_n)}{\sigma_n} \underset{D}{\to} 0          
$$
Thus, we obtain a distributional limit of the form:
$$
2C_2(Z^2 - 1)
$$
which is a central chi-square random variable.
    \end{description}
    \section{Eigenfunction and Eigenvalue Computations}
    In this section, we provide diagonalization computations for the two examples of U-Statistics we mention and discuss in our applications section \ref{Sec:Applications}. 
    \label{Sec: Eigenvalue Calculations}
    \subsection{Square of the Mean and General Rank-1 U-Statistic}
        Following the notation of section \ref{sec: rank 1 application} we write $$\E[X] := \theta_n := \theta$$ and 
        $$ \mathrm{Var}(X) := \sigma_n := \sigma$$ i.e. we suppress the dependence on $n$ to simplify notation throughout.
        \newline 
        \newline 
        Let us compute the approximating Gaussian quadratic form $W_n$ associated to $U_n$. For this, we need to compute the functions $H_1, H_2$, and then compute the eigenvalues and eigenfunctions of the Hilbert Schmidt operator corresponding to $H_2$. In this case:
        $$
        h(x,y) = xy - \theta^2
        $$
        So that:
    $$
    H_1(x) = \E[h(X_1,X_2) | X_1 = x] = \E[xX_2 - \theta^2] = x \theta - \theta^2
    $$
    and:
    $$
    H_2(x,y) = h(x,y) - H_1(x) - H_1(y) = xy - \theta(x+y) - \theta^2
    $$
    Next, we compute the eigenfunctions of $H_2$ with nonzero eigenvalue. Let $\psi \in L^{2}(\bb R, P_n)$ be arbitrary,  then the eigenfunction equation is:
    $$
    c\psi(x) = \E[H_2(x, X)\psi(X)]
    $$
    We compute:
    \begin{align*}
    \E[H_2(x,X)\psi(X)] &= \E[(xX - \theta(x + X) - \theta^2)\psi(X)] \\
    &= x\E[X\psi(X)] - \theta x \E[\psi(X)] -\theta\E[X\psi(X)] - \theta^2\E[\psi(X)] \\
    &= x\E[X\psi(X)]  -\theta\E[X\psi(X)] \\
    &= \E[X\psi(X)](x-\theta)
    \end{align*}
    Where here we used the fact that $\E[\psi(X)] = 0$. The final expression is a linear function in $x$, thus the image of $H_2$ is a subset of the linear functions, and we may set $\psi(x) = ax + b$ for $a,b \in \mathbb{R}$. Using orthonormality, we obtain:
    \begin{equation*}
            \E[aX + b] = 0 \implies a \theta + b = 0 \text{ and } \E[(aX + b)^2] =1 \implies a^2(\theta^2 + \sigma^2) + 2ab\theta + b^2 = 1
    \end{equation*}
     Using these equations gives:
    $$
    a = \frac{1}{\sigma}, b = -\frac{\theta}\sigma
    $$
    So that $$\phi_1(x) = \psi(x) = \frac{(x-\theta)}{\sigma}$$
    Thus, the image of $H_2$ is one-dimensional, and spanned by $\phi_1$. Having found the sole nontrivial eigenfunction, it remains to find the eigenvalue $c$. The eigenvalue equation is:
    \begin{align*}
c \frac{(x-\theta)}{\sigma} &= \E\sqbrace{H_2(x,X)\psi(X)} \\
&=\E[X\psi(X)](x-\theta) 
    \end{align*}
    So:
    $$
    \frac{c}{\sigma} = \E[X\psi(X)] = \E\sqbrace{\frac{X(X-\theta)}{\sigma}} \implies c = \E[X^2] - \theta^2 = \sigma^2
    $$
    This completes the process of finding eigenvalues and eigenfunctions. In summary, using the notation of Section \ref{sec:limi-dist-U-stat}, we obtain:
    \begin{equation}
        \phi_1(x) = \paren{\frac{x-\theta}{\sigma}}, \quad b_1 = \sigma
    \end{equation}
    The final step in the analysis is to represent $H_1(x), H_2(x,y)$ in terms of these functions and add another eigenfunction if needed. For $H_2$, we obtain the representation:
    Thus:
    $$
    H_2(x,y) = b_1\phi_1(x)\phi_1(y) =  \sigma^2\paren{\frac{x - \theta}{\sigma}}\paren{\frac{y - \theta}{\sigma}}
    $$
    For $H_1$, we seek a representation of the form:
    $$
    H_1(x) = x\theta = a_0\phi_0(x) + a_1 \phi_1(x) 
    $$
    where $\phi_0$ is a function to be chosen orthonormal to $\phi_1$. We observe:
    $$
    H_1(x) = x\theta-\theta^2 = \theta(x-\theta) = \theta \sigma \frac{x-\theta}{\sigma} = \theta\sigma \phi_1(x)
    $$
    Thus, again with notation of section \ref{sec:limi-dist-U-stat}, we obtain:
    \begin{equation}
        \phi_1(x) = \frac{x-\theta}{\sigma}, \quad a_1 = \theta \sigma
    \end{equation}
    This completes the analysis.
    \subsection{Student's T Statistic Kernel}
    We have:
    $$
    H_1(x) = \E[h(x,X)] = \E[xX^2 + x^2X] = x
    $$
    So:
    $$
    H_2(x,y) = h(x,y) - H_1(x) - H_1(y) = xy^2 + x^2 y - x - y= x(y^2-1) + y(x^2-1)
    $$
    For any function $\phi$, the integral operator gives:
    \begin{equation}
    \E[H_2(x,X)\phi(X)] = x\E[(X^2-1) \phi(X)] + (x^2 -1)\E[X\phi(X)] \label{Eq:OperatorEquation}
    \end{equation}
    Thus, the image of $H_2$ is contained in $\mathrm{span}\{x, x^2-1\}$ (which is a linearly independent set of functions), and it is at most a rank $2$ operator. In particular, any eigenfunction must lie in this set as well. Restricted to this set, the operator can be written in matrix form with respect to the basis as:
    $$
    A =\begin{pmatrix}
        \gamma^3 & \kappa ^4 -1\\ 
        1 & \gamma^3
    \end{pmatrix}
    $$
    which can be obtained by plugging $\phi(x) =x $ and $\phi(x) = x^2-1$ into the operator equation \eqref{Eq:OperatorEquation} above.
    Thus, computing the eigenvalues is a matter of finding the zeroes of the characteristic polynomial. We set:
    $$
    \det(A-cI) = (\gamma^3 - c)^2 - (\kappa^4 -1) = 0 \implies c = \gamma^3 \pm (\kappa^4 - 1)^{1/2} 
    $$
    This gives us the coefficients $b_1, b_2$ as the following - if $\gamma^3 > 0$, then:
    $$
    b_1 = \gamma^3 + (\kappa^{4} - 1)^{1/2}, \quad b_2 = \gamma^{3} - (\kappa^{4}-1)^{1/2}
    $$
    and if $\gamma^3 < 0$ then:
    $$
        b_1 = \gamma^3 - (\kappa^{4} - 1)^{1/2}, \quad b_2 = \gamma^{3} + (\kappa^{4}-1)^{1/2}
    $$
    The reason for this is that we label our eigenvalues in order of absolute value. With the definitions above, we are guaranteed $|b_1| \geq |b_2|$. The number $\kappa^4 - 1$ is always nonnegative
    by Jensen's inequality, but equals zero whenever $\bb{E}[X^4] = \bb{E}[X^2] = 1$. This occurs if and only if $|X|$ is almost surely equal to $1$, which in turn occurs if and only if $X$ is a random variable supported on the two point set $\cbrace{-1,1}$.
    \newline 
    \newline 
    Next, we need to find the corresponding eigenfunctions, so that we can ultimately determine $\mu_k$. 
    Next, we solve for the eigenfunctions. For any coefficients $(p,q)$ we have:
    $$
    \begin{pmatrix}
        \gamma^3 & \kappa ^4 -1\\ 
        1 & \gamma^3
    \end{pmatrix}\begin{pmatrix}
        p \\ q
    \end{pmatrix} = \begin{pmatrix}
        p\gamma^3 + q(\kappa^4 -1) \\ 
        p + q\gamma^3
    \end{pmatrix}
    $$
    Letting $c$ be any eigenvalue, the eigenfunction equations are thus:
    $$
   p\gamma^3 + q(\kappa^4-1)  = cp,\quad  p+ q \gamma^3 = cq
    $$
    We can get one selection by setting $p = 1$. Then, looking at the second equation gives:
    $$
    1 + q\gamma^3 = cq \implies q = \frac{1}{c-\gamma^3}
    $$
    Plugging in the two possible values for $c$ gives::
    $$
    \psi_1(x) = x + \frac{1}{(\kappa^4-1)^{1/2}}(x^2 -1), \quad \psi_2(x) = x - \frac{1}{ (\kappa^{4} -1)^{1/2}}(x^2-1) 
    $$
    Next, we need to normalize these so that they have $L^2$ norm $1$. In general we have:
    $$
    \E[(pX + q(X^2 -1))^2] = \E[p^2 X^2 + 2pq(X^3 - X) + q^2(X^2-1)^2] = p^2 + 2pq\gamma^3 + q^2(\kappa^4-1) 
    $$
    Hence:
    $$
    \E[\psi_1(X)^2] = 1 + \frac{2\gamma^6}{(\kappa^{4}-1)^{1/2}} + \frac{\kappa^4 - 1}{(\kappa^4-1)}  =2 \paren{1 + \frac{2\gamma^{3}}{(\kappa^{4}-1)^{1/2}}}
    $$
    And:
    $$
    \E[\psi_2(X)^2] = 2\paren{1 -\frac{2\gamma^3}{(\kappa^4 -1)^{1/2}}}
    $$
    So that the normalized eigen-functions are:
    $$
    \phi_1(x) = \frac{x + \frac{1}{(\kappa^4-1)^{1/2}}(x^2 -1)}{\paren{2 \paren{1 + \frac{2\gamma^{3}}{(\kappa^{4}-1)^{1/2}}}}^{1/2}}
    $$
    $$\phi_2(x) = \frac{x + \frac{1}{(\kappa^4-1)^{1/2}}(x^2 -1)}{\paren{2 \paren{1 - \frac{2\gamma^{3}}{(\kappa^{4}-1)^{1/2}}}}^{1/2}}$$
    Finally, we need to obtain $a_k$, to represent $H_1(x)$ in terms of the eigen-functions $\phi_1(x), \phi_2(x)$ and the constant eigenfunction $\phi_0(x)$. Notice:
    $$
    H_1(x) = x = \frac{1}{2}\psi_1(x) + \frac{1}{2}\psi_2(x) = \frac{\E[\psi_1(X)^2]^{1/2}}{2}\phi_1(x) + \frac{\E[\psi_2(X)^2]^{1/2}}{2}\phi_2(x)
    $$
    Thus, by definition, we have:
    $$
    a_1 = \frac{\paren{2 \paren{1 + \frac{2\gamma^{3}}{(\kappa^{4}-1)^{1/2}}}}^{1/2}}{2}, \quad a_2 = \frac{\paren{2 \paren{1 - \frac{2\gamma^{3}}{(\kappa^{4}-1)^{1/2}}}}^{1/2}}{2} 
    $$
    \subsection{Student's T-Statistic Kernel - Bound Analysis}
    \label{sec: t stat appendix analysis}
    In this section we provide the computations behind the asymptotics discussed in section \ref{sec: t stat analysis}, in particular in the case when $\gamma > 0$, i.e. $p_1 < 2$. 
    We begin with the bound from Theorem $1$. We first compute some intermediate quantities. In this case, we exactly two eigenvalues and hence:
    $$
    \frac{1}{p_1} + \frac{1}{p_2} = 1 \implies \frac{1}{p_1} - \frac{1}{2} = \frac{1}{2} - \frac{1}{p_2}.
    $$
    We have:
    $$
    \frac{1}{p_1} - \frac{1}{2} = \frac{\gamma^3\xi^{1/2}}{2(\gamma^6 + \xi)}, \quad \frac{p_1}{2} -1 = \frac{\gamma^6 + \xi}{(\gamma^3 + \xi^{1/2})^2} - 1 = \frac{-2\gamma^3\xi}{(\gamma^3 + \xi^{1/2})^2},
    $$
    and
    $$
    \frac{1}{2} - \frac{1}{p_2} = \frac{\gamma^3\xi^{1/2}}{2(\gamma^6 + \xi)}, \quad \frac{p_2}{2} -1 = \frac{\gamma^6 + \xi}{(\gamma^3 - \xi^{1/2})^2} - 1 = \frac{2\gamma^3\xi}{(\gamma^3 + \xi^{1/2})^2}.
    $$
    Using the notation of Theorem 1, we obtain the quantities:
    \begin{align}
    B_{p,1}^{1/p_1} &= \paren{\frac{1}{\frac{1}{p_1} - \frac{1}{2}}}^{1/p_1}\sqbrace{\paren{\frac{\e}{2\L_1}}^{\frac{p_1}{2} - 1}-1}^{\frac{1}{p_1}} \\
    &= \sqbrace{\paren{\frac{2(\gamma^6 + \xi)}{\gamma^3\xi^{1/2}}}\sqbrace{\paren{\frac{\e}{2\L_1}}^{\frac{-2\gamma^3\xi}{(\gamma^3 + \xi^{1/2})^2}} - 1}}^{\frac{1}{2} + \frac{\gamma^3\xi^{1/2}}{2(\gamma^6 + \xi)}} \\
    A_{p,2}^{1/p_2} &= \paren{\frac{1}{\frac{1}{2} - \frac{1}{p_2}}}^{1/p_2} \sqbrace{1 - \paren{\frac{\e}{2\L_1}}^{\frac{p_2}{2} - 1}}^{\frac{1}{p_2}} \\
    &= \sqbrace{\paren{\frac{2(\gamma^6 + \xi)}{\gamma^3\xi^{1/2}}}\sqbrace{1 - \paren{\frac{\e}{2\L_1}}^{\frac{-2\gamma^3\xi}{(\gamma^3 + \xi^{1/2})^2}} }}^{\frac{1}{2} - \frac{\gamma^3\xi^{1/2}}{2(\gamma^6 + \xi)}}
    \end{align}
    Multiplying together and substituting the expression for $\L_1^2$ gives we obtain:
    \begin{align*}
    B_{p,1}^{1/p_1}A_{p,2}^{1/p_2} &= \paren{\frac{2(\gamma^6 + \xi)}{\gamma^{3}\xi^{1/2}}} \sqbrace{\paren{\frac{\e}{2\L_1}}^{{\frac{-2\gamma^3\xi}{(\gamma^3 + \xi^{1/2})^2}}}-1}^{\frac{1}{2} + \frac{\gamma^3\xi^{1/2}}{2(\gamma^6 + \xi)}}
    \sqbrace{1 - \paren{\frac{\e}{2\L_1}}^{\frac{2\gamma^3\xi}{(\gamma^3 + \xi^{1/2})^2}}}^{\frac{1}{2} - \frac{\gamma^3\xi^{1/2}}{2(\gamma^6 + \xi)}} \\
    &=\paren{\frac{2(\gamma^6 + \xi)}{\gamma^{3}\xi^{1/2}}} \sqbrace{\paren{\frac{n\e^2}{4(\gamma^6 + \xi)}}^{{\frac{-\gamma^3\xi}{(\gamma^3 + \xi^{1/2})^2}}}-1}^{\frac{1}{2} + \frac{\gamma^3\xi^{1/2}}{2(\gamma^6 + \xi)}}
    \sqbrace{1 - \paren{\frac{n\e^2}{4(\gamma^6 + \xi)}}^{\frac{\gamma^3\xi}{(\gamma^3 + \xi^{1/2})^2}}}^{\frac{1}{2} - \frac{\gamma^3\xi^{1/2}}{2(\gamma^6 + \xi)}}.
    \end{align*}
    Finally, to obtain the full bound, we must divide by $\L_1$, which in this case is $\frac{[2(\gamma^6 + \xi)]^{1/2}}{n^{1/2}}$. This provides a total bound:
    $$
    \frac{n^{1/2}}{[2(\gamma^6 + \xi)]^{1/2}}\paren{\frac{2(\gamma^6 + \xi)}{\gamma^{3}\xi^{1/2}}} \sqbrace{\paren{\frac{n\e^2}{4(\gamma^6 + \xi)}}^{{\frac{-\gamma^3\xi}{(\gamma^3 + \xi^{1/2})^2}}}-1}^{\frac{1}{2} + \frac{\gamma^3\xi^{1/2}}{2(\gamma^6 + \xi)}}
    \sqbrace{1 - \paren{\frac{n\e^2}{4(\gamma^6 + \xi)}}^{\frac{\gamma^3\xi}{(\gamma^3 + \xi^{1/2})^2}}}^{\frac{1}{2} - \frac{\gamma^3\xi^{1/2}}{2(\gamma^6 + \xi)}}
    $$
    Finally, as is seen in the full bounds of Theorems \ref{Thm:Full Bound}, \ref{Thm:User Friendly Bound}, this term is multiplied by the indicator:
    $$
    \mathbbm{1}\cbrace{\e \leq 2 \L_1} = \mathbbm{1}\cbrace{\e \leq \frac{2(\gamma^6 + \xi)}{n}}.
    $$
    Thus, it only makes sense to analyze the bound in these cases.
    \newline 
    \newline 
    Assume now that:
    $$
    \frac{\gamma^6+\xi}{n} \to c^2,
    $$
    for some $c \in (0, +\infty)$; this is equivalent to saying that the chi-square part of the statistic $n^{1/2}\widetilde{W_n}$ has a non-trivial limit. Then, after fixing $2c >\e>0$ and passing to the limit, the above terms become:
    \begin{align}
    & \quad \frac{1}{c}\paren{\frac{2(\gamma^6 + \xi)}{\gamma^{3}\xi^{1/2}}} \sqbrace{\paren{\frac{\e^2}{4c^2}}^{{\frac{-\gamma^3\xi}{(\gamma^3 + \xi^{1/2})^2}}}-1}^{\frac{1}{2} + \frac{\gamma^3\xi^{1/2}}{2(\gamma^6 + \xi)}}
    \sqbrace{1 - \paren{\frac{\e^2}{4c^2}}^{\frac{\gamma^3\xi}{(\gamma^3 + \xi^{1/2})^2}}}^{\frac{1}{2} - \frac{\gamma^3\xi^{1/2}}{2(\gamma^6 + \xi)}} 
    \\&=\frac{1}{c}\paren{\frac{2(\gamma^6 + \xi)}{\gamma^{3}\xi^{1/2}}} \sqbrace{\paren{\frac{4c^2}{\e^2}}^{{\frac{\gamma^3\xi}{(\gamma^3 + \xi^{1/2})^2}}}-1}^{\frac{1}{2} + \frac{\gamma^3\xi^{1/2}}{2(\gamma^6 + \xi)}}
    \sqbrace{1 - \paren{\frac{\e^2}{4c^2}}^{\frac{\gamma^3\xi}{(\gamma^3 + \xi^{1/2})^2}}}^{\frac{1}{2} - \frac{\gamma^3\xi^{1/2}}{2(\gamma^6 + \xi)}}.
    \end{align}
    Examining this term more closely, we see that if $c$ is large relative to $\e$, the first term in square braces dominates, whereas if $c$ is small relative to $\e$, the coefficient $\frac{1}{c}$ is dominant. Furthermore, as $c \to +\infty$, we observe that the above term tends to $0$. This can be observed more generally by taking $\L_1 \to +\infty$ in the bound for $\mathcal{I}$ in Theorem $\ref{Thm:Full Bound}$. Alternatively, we can analyze the above bound by hand to confirm this behavior - the only term to reckon with is the first in square braces. We have that:
    $$
    \sqbrace{\paren{\frac{4c^2}{\e^2}}^{{\frac{\gamma^3\xi}{(\gamma^3 + \xi^{1/2})^2}}}-1}^{\frac{1}{2} + \frac{\gamma^3\xi^{1/2}}{2(\gamma^6 + \xi)}} =  \sqbrace{\paren{\frac{4c^2}{\e^2}}^{{\frac{\gamma^3\xi}{(\gamma^3 + \xi^{1/2})^2}}}-1}^{\frac{(\gamma^3 + \xi^{1/2})^2}{2(\gamma^6 + \xi)}} \leq \sqbrace{\frac{4c^2}{\e^2}}^{\frac{\gamma^3\xi}{2(\gamma^{6} + \xi)}} .
    $$
   The AM-GM inequality gives that $xy \leq \frac{x^2 + y^2}{2} \implies \frac{\gamma^{3}\xi}{2(\gamma^6+\xi)} \leq \frac{1}{4}$. Hence, by the assumption that $\e \leq 2c$ we obtain:
   $$
   \sqbrace{\frac{4c^2}{\e^2}}^{\frac{\gamma^3\xi}{2(\gamma^{6} + \xi)}}  \leq 4^{1/4} \frac{c^{1/2}}{\e}
   $$
   Multiplying by the leading term $\frac{1}{c}$ shows that this tends to $0$. 
    This term represents the chi-square part of the concentration function. We may also apply the simpler bound of Theorem \ref{Thm:User Friendly Bound}. It is only explicitly dependent on $p_1$ (although in this case, as only $\l_1,\l_2 \neq 0$, $p_1$ determines $p_2$) and gives:
    $$
    \mathcal{J}(\e, \l) \leq \frac{1}{\e}\frac{1}{\frac{2}{p_1} - 1}\paren{\frac{\e}{\L_1}}^{\frac{3}{2} - \frac{1}{p_1}} = \frac{1}{\frac{2}{p_1} - 1} \paren{\frac{1}{\L_1}}^{\frac{3}{2} - \frac{1}{p_1}}\e^{\frac{1}{2} - \frac{1}{p_1}}.
    $$
    We have that:
    \begin{align*}
    \frac{3}{2} - \frac{1}{p_1} &= \frac{1}{2} + \frac{1}{p_1} + \frac{1}{p_2} -\frac{1}{p_1}  \\
    &= \frac{1}{2} + \frac{1}{p_2} \\
    &= \frac{1}{2} + \frac{(\gamma^3 - \xi^{1/2})^2}{2(\gamma^6 + \xi)}.
    \end{align*}
    and:
    $$
    \frac{2}{p_1} - 1 = 2\paren{\frac{1}{p_1} - \frac{1}{2}} = \frac{\gamma^3\xi^{1/2}}{(\gamma^6 + \xi)}.
    $$
    Thus, in this case, we obtain an estimate for $\e \mathcal{J}(\e,\l)$ of the form:
    $$
    \frac{(\gamma^6 + \xi)}{\gamma^3\xi^{1/2}}\paren{\frac{n^{1/2}}{2^{3/2}(\gamma^6 + \xi)^{1/2}}}^{\frac{3}{2} - \frac{\paren{\gamma^3 - \xi^{1/2}}^2}{2(\gamma^6 + \xi)}} \e^{-\frac{\gamma^3\xi^{1/2}}{2(\gamma^6 + \xi)}}.
    $$
    Taking $n \to \infty$ yields:
    $$
     \frac{(\gamma^6 + \xi)}{\gamma^3\xi^{1/2}}\paren{\frac{1}{2^{3/2}c}}^{\frac{3}{2} - \frac{\paren{\gamma^3 - \xi^{1/2}}^2}{2(\gamma^6 + \xi)}} \e^{-\frac{\gamma^3\xi^{1/2}}{2(\gamma^6 + \xi)}}.
    $$
    To recover the full bound, we multiply by the appropriate indicator as above. 
\end{document}